\numberwithin{equation}{section}
\newtheorem{theorem}{Theorem}
\newtheorem{corollary}{Corollary}
\newtheorem{proposition}{Proposition}
\newtheorem{remark}{Remark}
\DeclareMathOperator*{\res}{Res}
\begin{document}
     \title{ Long  time and  Painleve-type  asymptotics  for  the Sasa-Satsuma equation in solitonic space time regions  }
\author{Weikang Xun$^1$  and Engui FAN$^{1}$\thanks{\ Corresponding author and email address: faneg@fudan.edu.cn } }
\footnotetext[1]{ \  School of Mathematical Sciences  and Key Laboratory of Mathematics for Nonlinear Science, Fudan University, Shanghai 200433, P.R. China.}

\date{ }

\maketitle

\begin{abstract}
	\baselineskip=17pt
The Sasa-Satsuma equation with  $3 \times 3 $ Lax representation  is   one of the integrable extensions of the nonlinear Schr\"{o}dinger  equation.
 In this paper,  we consider the Cauchy problem of  the Sasa-Satsuma equation  with  generic decaying  initial data.
   Based on the  Rieamnn-Hilbert problem characterization  for the Cauchy problem  and  the $\overline{\partial}$-nonlinear steepest descent method,   we find qualitatively different
    long time   asymptotic forms  for the Sasa-Satsuma equation in   three  solitonic  space-time regions:

       (1)\ For the region $x<0, |x/t|=\mathcal{O}(1)$,    the  long time  asymptotic is given by
      $$q(x,t)=u_{sol}(x,t| \sigma_{d}(\mathcal{I})) + t^{-1/2} h  + \mathcal{O} (t^{-3/4}). $$
     in which the  leading term is $N(I)$ solitons, the second term the second $t^{-1/2}$ order term   is
    soliton-radiation interactions  and the third term   is a residual error    from a $\overline\partial$ equation.

      (2)\ For the region $   x>0, |x/t|=\mathcal{O}(1)$,  the  long time  asymptotic is given by
   $$  u(x,t)= u_{sol}(x,t| \sigma_{d}(\mathcal{I}))  +   \mathcal{O}(t^{-1}).$$
  in which the  leading term is $N(I)$ solitons, the second  term  is a residual error    from a $\overline\partial$ equation.

   (3) \ For the region  $  |x/t^{1/3}|=\mathcal{O}(1)$,   the  Painleve  asymptotic is found  by
    $$ u(x,t)=  \frac{1}{t^{1/3}} u_{P}  \left(\frac{x}{t^{1/3}} \right) + \mathcal{O}   \left(t^{2/(3p)-1/2}   \right), \qquad  4<p < \infty.$$
     in which the  leading term   is  a solution to a modified Painleve $\mathrm{II}$ equation,  the second  term  is a residual error    from a $\overline\partial$ equation.

 \par\noindent\textbf{Keywords: }  Sasa-Satsuma equation;     Riemann-Hilbert problem;  $\bar{\partial}$ steepest descent analysis;  soliton resolution;  Painleve equation.
\end{abstract}

\baselineskip=17pt

\tableofcontents

\newpage

\section {Introduction}

In this paper, we focus on the long time asymptotics of the Cauchy problem  for the Sasa-Satsuma  equation \cite{Sasa}
   \begin{align}\label{Sasa}
    & u_{t}+ u_{xxx} +  6 \left| u \right|^2 u_{x} + 3 u  (\left| u \right|^2)_{x} =0,  \qquad (x,t) \in \mathbb{R} \times \mathbb{R}^{+},\\
   &u(x,0)=u_{0}(x) \in     \mathcal{S}(\mathbb{R}),
   \end{align}
where  initial data  $ u_0(x,0)$ belongs to  the Schwarz  space $\mathcal{S}(\mathbb{R})$.
The Sasa-Satsuma equation  is   one of the integrable extensions of the nonlinear Schr\"{o}dinger (NLS) equation, and it plays an important role in
 a number of  physical science areas due to its rich mathematical structure and physical background such as deep water waves \cite{deepwater}  and  dispersive  nonlinear media \cite{dispersive}.
  In the past few years,  Sasa-Satsuma equation  has  attracted much  attention and widely been   studied.  Early in 1991,
    Sasa and  Satsuma  present  the  Sasa-Satsuma equation  as a new-type high-order NLS equation. It was shown that the  Sasa-Satsuma equation   is solvable by the means of inverse scattering transformation (IST), and
     the soliton solution can  propagate  steadily \cite{Sasa}.  In 2003,   Gilson et al   studied the bilinearization and multisoliton  soltions of the Sasa-Satsuma equation \cite{bilinearization}.
      In addition, the squared  eigenfunctions of Sasa-Satsuma equation has been derived via the Riemann-Hilbert (RH) method \cite{squared eigenfunctions}.   In 2007,    Sergyeyev and   Demskoi    studied the recursion operator and nonlocal symmetries  of Sasa-Satsuma  and the complex sine-Gordon \uppercase\expandafter{\romannumeral2} equation \cite{nonlocalsymmetries}.
      Zhai and Geng constructed finite gap  solutions  of the Sasa-Satsuma equation    via the algebro-geometric method \cite{finitegenus}.
  The initial-boundary value problem for the Sasa-Satsuma equation on the half line was investigated by using the Fokas  method \cite{uniform}.
 Other  important properties such as infinite conservation laws, the Painl$\acute{e}$ve property and rogue wave etc may refer to \cite{conservation,algebro-geometric,roguewave}.

   The IST plays an important role in the study  of   integrable systems.   It  can   exactly  solve the integrable systems  under  reflectionless potentials condition.
    The study on  large-time asymptotic behaviors is a formidable challenge in integrable system.  The nonlinear steepest descent method for RH problem developed by Deift and Zhou
     made an indelible contribution to asymptotic analysis of integrable systems  \cite{Deift}.  In 1997, Deift and Zhou studied the long-time behaviors of the Cauchy problem of  small dispersion KdV   by generalizing the steepest descent method \cite{DZ}.    In 2002, Vartanian considered  the long-time asymptotic behaviors of defocusing NLS equation with finite-density initial data \cite{Var}. In addition,  de Monvel has studied the long-time behaviors of  WKI-type  short-pulse equation with  sufficiently   decaying  initial conditions \cite{Monvel}.   A powerful  $\overline{\partial}$-steepest descent method  was first introduced
     by  McLaughlin and   Miller   to analyze the asymptotics of orthogonal polynomials \cite{McL1,McL2}.
   In recent years,   this method has been successfully used to obtain  the long-time asymptotics and
 the soliton resolution conjecture for some integrable systems  \cite{MK,fNLS,Liu3,LJQ,YF1}.  But  most work   was  concentrated
  on  integrable systems with $2\times 2 $ Lax pair such as Camassa-Holm, short-pulse etc  by using the nonlinear steepest descent analysis
  method \cite{Waveequation,Boutet,Fokas,Hirota,YF1}.

In recent years, de Monvel et al studied long-time  asymptotic analysis for the  Degasperis-Procesi and Novikov  equations with $3\times 3 $  Lax pairs \cite{DP,Novikov}.  For Schwartz initial data and without consideration of solitons,  Liu, Geng  and  Xue   applied Deift-Zhou nonlinear steepest descent method to study long time
asymptotics for the  Sasa-Satsuma equation with $3 \times 3 $    Lax pair   \cite{Liuhuan}.
Huang and Lenells further  found modified Painleve asymptotic  for  the  Sasa-Satsuma equation without  soliton region  \cite{Huanglin}.

In this article, we consider   the effect of  solitons   on the long time asymptotics of the  Sasa-Satsuma equation.
We  extend  the $\overline{\partial}$-steepest descent method to  the Sasa-Satsuma equation and  find qualitatively different  long time   asymptotic forms
 in   three  solitonic  space-time regions $\mathrm{I}$,  $\mathrm{II}$ and $\mathrm{III}$,
see Figure \ref{figure0}.   Here  we   list our main results obtained in this paper.
  \begin{itemize}
  \item[$\blacktriangleright$]   {\bf  In the region $\mathrm{I}$:} \     $u(x,t)= u_{sol}(x,t| \sigma_{d}(\mathcal{I}))  +  t^{-1/2} h + \mathcal{O}(t^{-3/4}).$

     \item[$\blacktriangleright$]    {\bf  In  the region $\mathrm{II}$:} \    $u(x,t)= u_{sol}(x,t| \sigma_{d}(\mathcal{I}))  +   \mathcal{O}(t^{-1}).$

     \item[$\blacktriangleright$]     {\bf  In  the region $\mathrm{III}$:} \     $u(x,t)=  \frac{1}{t^{1/3}} u_{P}  \left(\frac{x}{t^{1/3}} \right) + \mathcal{O}   \left(t^{2/(3p)-1/2}   \right), \ \ 4<p< \infty.$

  \end{itemize}
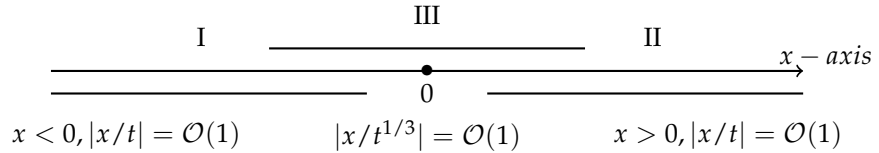
\begin{figure}[H]
    \begin{center}
    \begin{tikzpicture}
    \draw[thick,->, ](-5,0)--(5,0);
    \draw[thick ](-5,-0.3)--(-0.8,-0.3);
    \draw[thick ](-2.1,0.3)--(2.1,0.3);
    \draw[thick ](0.8,-0.3)--(5,-0.3);
    \node    at (5.3,0.2)  {$x-axis$};
    \node    at (0,0)  {$\bullet$};
    \node  at (0,-0.3) {0};
    \node  at (-4,-0.85) {$x<0, |x/t|=\mathcal{O}(1)$};
     \node  at (-3,0.45) {$\mathrm{I}$};
    \node  at (4,-0.85) {$x>0, |x/t|=\mathcal{O}(1)$ };
     \node  at (0,0.75) {$\mathrm{III}$};
     \node  at (0,-0.85) {$|x/t^{1/3}|=\mathcal{O}(1)$};
      \node  at (3,0.45) {$\mathrm{II}$};
    \end{tikzpicture}
    \end{center}
    \caption{ \footnotesize  Three  space-time regions. Region $\mathrm{I}$: Oscillatory region, $x<0, |x/t|=\mathcal{O}(1)$; Region $\mathrm{II}$: Soliton  region, $x>0, |x/t|=\mathcal{O}(1)$; Region $\mathrm{III}$: Painleve region, $|x/t^{1/3}|=\mathcal{O}(1)$.}
    \label{figure0}
\end{figure}

The organization of this work is as follows:  In Section 2, we quickly recall   direct  scattering theory on the  Sasa-Satsuma equation,  such as
analyticity, symmetries and asymptotics  for the  eigenfunctions of Lax pair.   Further
the the Cauchy problem  of the Sasa-Satsuma equation (\ref{Sasa}) is characterized with a   RH problem.   For detailed, my refer to   \cite{Liuhuan,Huanglin}.
  In Section 3, we consider  the space-time Region $\mathrm{I}$: $x<0$, $|x/t|=\mathcal{O}(1)$, in which there two phase points
  on the real axis.   We introduce a series of effective transformation to transfer the RH problem into some solvable model problems,
  which allow us  to  set up   the long-time asymptotic behaviors of   the Sasa-Satsuma equation in  Region $\mathrm{I}$.
 In Section 4,  we consider  the space-time Region $\mathrm{II}$: $x>0$, $|x/t|=\mathcal{O}(1)$, in which there are two phase points on the
    imaginary axis.  In this case,  the long-time asymptotic result  is similar with  that in Region $\mathrm{I}$.
    The difference is that the contribution from the jump contours can be  decayed  exponentially as $t \to +\infty$  in the  Region $\mathrm{II}$.
 Finally, in  Section 5,  we self-similar region or Painlev$\acute{e}$ Region $\mathrm{III}$: $|x/t^{1/3}|=\mathcal{O}(1)$.
In this case,  the contribution from the solitons can be decayed exponentially and we only need to consider the influence caused by the jump condition and pure $\overline{\partial}$-problem. Furthermore,  the RH problem corresponding to the jump condition can be transformed into  a  solvable modified Painlev$\acute{e}$-$\mathrm{II}$ model.
 Based on these results, we can easily obtain the  long time asymptotic behaviors of the solutions in  self-similar region.
  As byproducts of   large-time asymptotic analysis,  we  verify  that the  soliton solutions of Sasa-Satsuma equation are asymptotically stable.

\section {Direct scattering transformation }


\subsection {   Spectral analysis on Lax pair}

 As we konw,  Sasa-Satsuma equation is equivalent to the compatible condition of the following lax pair:
 \begin{equation}
    \Phi_{x}    =(-i k \sigma  + U ) \Phi,  \qquad  \Phi_{t}     =(- 4 i k^3 \sigma  + \widetilde{U} ) \Phi,
\end{equation}
where $\Phi(k,x,t)$ is a matrix-valued function of $k,x,t$  and $k \in \mathbb{C}$ is the spectral parameter,
 \begin{equation}
   \begin{aligned}
   \sigma  & = \begin{pmatrix}  1 & 0  & 0  \\ 0  & 1  & 0  \\ 0  & 0  & -1  \end{pmatrix},
   \quad U=\begin{pmatrix} 0 & \boldsymbol{q} \\ -\boldsymbol{q}^{\dag} & 0  \end{pmatrix}, \quad \boldsymbol{q}=\begin{pmatrix} u,  u^{*} \end{pmatrix}^{T}, \\
   \widetilde{U}  &  = 4 k^2  U + 2 i k \sigma  (U_x - U^2) + 2 U^3  -U_{xx}  +  \left[    U_{x},  U       \right],
   \end{aligned}
 \end{equation}
where $"*"$ and $"\dag"$ denote  the complex conjugation and Hermite transformation, respectively. 
In what follows, we introduce unknown function   
$$\Psi(k,x,t)=\Phi(k,x,t) e^{i(kx+4k^3 t)\sigma},$$  
which satisfies the following equation
\begin{equation}\label{lax2}
       \Psi_{x} =  - i k [\sigma , \Psi] + U \Psi,    \qquad [\sigma,\Psi]=\sigma \Psi - \Psi \sigma.
\end{equation}
The above equation has a pair  of  Jost solutions $\Psi_{+}(k,x,t)$ and $\Psi_{-}(k,x,t)$ which can be  written as follows:
\begin{align}
       \Psi_{+}(k,x,t) & = I -  \int_{x}^{+\infty} e^{ik(y-x) \widehat{\sigma} } U(y,t) \Psi_{+}(k,y,t) dy,  \\
       \Psi_{-}(k,x,t) & = I +  \int_{-\infty}^{x} e^{ik(y-x) \widehat{\sigma} } U(y,t) \Psi_{-}(k,y,t) dy.
\end{align}
If we rewrite $\Psi_{\pm}$ as block form  $\left(    \Psi_{\pm L}(k,x,t), \Psi_{\pm R}(k,x,t) \right) $, where  $\Psi_{\pm L}(k,x,t)$ and $\Psi_{\pm R}(k,x,t)$  represent the first  two column and third column of $\Psi_{\pm}$, respectively.  By analyzing the expression of $\Psi_{\pm}$, we obtain that $\Psi_{-L}, \Psi_{+R}$  are analytic in $\mathbb{C}_{+}$, and $\Psi_{+L}, \Psi_{-R}$  are analytic in $\mathbb{C}_{-}$. Moreover,
\begin{align}
  \left(    \Psi_{- L}(k,x,t), \Psi_{+ R}(k,x,t) \right) = I + \mathcal{O} \left( \frac{1}{k} \right), \qquad  k \in \mathbb{C}_{+} \to \infty, \\
  \left(    \Psi_{+ L}(k,x,t), \Psi_{- R}(k,x,t) \right) = I + \mathcal{O} \left( \frac{1}{k} \right), \qquad  k \in \mathbb{C}_{-} \to \infty.
\end{align}
According to the fact that the trace of $U$ equals zero and Abel lemma, we have that $\det \Psi_{\pm} (k,x,t)$ are independent of variable $x$ and $\det \Psi_{\pm}=1$.  Furthermore, since  $\Psi_{\pm} e^{-i(kx+4k^3 t) \sigma }$   satisfy the same first order linear differential equation, they are linearly dependent, i.e.  there exists an x,t-independent  transition matrix $S(k)$, such that
\begin{equation}\label{scatter}
     \Psi_{-}(k) = \Psi_{+}(k) e^{-i(kx+4k^3 t) \widehat{\sigma} } S(k), \quad \det S =1.
\end{equation}
According to the symmetries
\begin{equation}
   U^{\dag}=-U, \quad  \varsigma U \varsigma =U^{*}, \quad \varsigma=\varsigma^{-1}=\begin{pmatrix}    \sigma_1  & 0  \\  0 & 1   \end{pmatrix}, \quad  \sigma_1=\begin{pmatrix}  0  & 1  \\  1  & 0   \end{pmatrix},
\end{equation}
we obtain that $\Psi_{\pm}$ and $S(k)$ satisfy
\begin{align}
  \Psi_{\pm}^{\dag}(k^{*};x,t) =\Psi^{-1}(k;x,t),   &\quad \Psi_{\pm}(k;x,t) =\varsigma \Psi_{\pm}^{*}(-k^{*},x,t)\varsigma, \\
  s^{\dag}(k^{*}) =s^{-1}(k), &  \quad s(k) =\varsigma s^{*}(-k^{*}) \varsigma.
\end{align}
From the above symmetries of the scattering matrix, we can rewrite $s(k)$ as following form
\begin{equation}
   s(k)=\begin{pmatrix}  a(k)  &  -  \mathrm{adj}  [ a^{\dag} (k^{*})    ] b^{\dag} (k^{*})\\      b(k) & \det    [ a^{\dag} (k^{*})  ] \end{pmatrix},
\end{equation}
where $a(k)$ denotes a $2\times 2 $ matrix,  $\mathrm{adj}  [ a^{\dag} (k^{*})    ]  $ represents the adjoint matrix of matrix $a^{\dag} (k^{*})$, and $b(k)$ is a 2 dimensional row vector. Moreover, the continuous spectrum $a(k)$ and $b(k)$ satisfy $a(k)=\sigma_1 a^{*}(-k^{*}) \sigma_{1}$ and $b^{*}(-k^{*}) \sigma_{1} = b(k)$. Next, evaluating equation (\ref{scatter})  at $t=0$, we find
\begin{equation}
      \begin{aligned}
  s(k) &=\begin{pmatrix}  a(k)  &  -  \mathrm{adj}  [ a^{\dag} (k^{*})   ] b^{\dag} (k^{*})\\      b(k) & \det  [ a^{\dag} (k^{*})  ] \end{pmatrix}= \lim_{x \to \infty}  e^{i k x \widehat{\sigma}} \Psi_{-}(k;x,0).  \\
     \end{aligned}
\end{equation}
Furthermore, we note that
\begin{align}
  a(k) & =I + \int_{-\infty}^{+\infty} q(x,0) \Psi_{-21}(k,x,0) dx,  \\
      b(k) & =- \int_{-\infty}^{+\infty} e^{-2ik x} q^{\dag} (x,0) \Psi_{11}(k,x,0) dx,
\end{align}
from the analyticity of $\Psi(k,x,t)$, we have $a(k)$ is analytic in $\mathbb{C}_{+}$.

\subsection{A Riemann-Hilbert problem}
Suppose that $ a(k) $ has $ 2 N$ simple zeros $k_{1},\dots, k_{ 2 N}$ in $\mathbb{C}_{+}$. Since $a(k)=\sigma_{1} a^{*}(-k^{*}) \sigma_{1}$, we note that the simple zeros satisfy $k_{N+j}=-k_{j}^{*},  j=1,\dots, N$. We define the following sectionally meromorphic matrix
\begin{equation}
    M(k;x,t )= \left\{ \begin{aligned}  & \left(\Psi_{-L}(k) a^{-1}(k), \Psi_{+R}(k) \right), &  k \in \mathbb{C}_{+},  \\  &  \left(\Psi_{+L}(k),  \frac{\Psi_{-R}(k)}{\det a^{\dag} (k^{*})} \right), &  k \in \mathbb{C}_{-},     \end{aligned}  \right.
\end{equation}
  which has  $ 2 N$  simple  poles  $\mathscr{K}=\{k_j, j=1,\dots, 2N \}$ in $\mathbb{C}^{+}$  and  $ 2 N$  simple  poles $\overline{\mathscr{K}}=  \{k_j^{*}, j=1,\dots, 2N  \}$ in $\mathbb{C}^{-}$. Besides, matrix-valued function $M(k;x,t)$ satisfies the following Riemann-Hilbert problem: \\

\noindent\textbf{RHP1}. Find a matrix-valued function $M(k)=M(k;x,t)$ which solves:
\begin{itemize}
  \item[(a)] Analyticity $:$ $ M(k)$ is meromorphic in $\mathbb{C \setminus R}$ and has simple poles at $k_j \in  \mathscr{K}$ and $k_j^{*} \in \overline{\mathscr{K}}  $;
   \item[(b)]Jump condition: $M(k)$ has continuous boundary  values $M_{\pm}(k)$ on $\mathbb{R}$ and  \\
            \begin{equation}
                            M_{+}(k)=M_{-}(k)J(k), \qquad  k \in \mathbb{R},
            \end{equation}
      where
                 \begin{equation}
                             J(k)= \begin{pmatrix}  I+ \gamma^{\dag}(k^{*}) \gamma(k)  &  \gamma^{\dag}(k^{*}) e^{-2it \theta}  \\ e^{2it \theta} \gamma(k) & 1  \end{pmatrix};
                 \end{equation}
  \item[(c)]   Asymptotic behaviors:
                   \begin{equation}
                         M(k;x,t)=I+ O \left( \frac{1}{k} \right), \quad k \to \infty;
                   \end{equation}
  \item[(d)]  Residue conditions: $M(k;x,t)$ has simple poles at each point in $ \mathscr{K} \cup  \mathscr{\overline{K}}$  with:
          \begin{equation}
                 \begin{aligned}
                      \res_{k=k_{j}} M(k) & =\lim_{k \to k_{j}} M(k) \begin{pmatrix}  0  & 0 \\ c_{j}e^{2it \theta(k)}  & 0 \end{pmatrix},     \\
                       \res_{k=k_{j}^{*}} M(k) & =\lim_{k \to k_{j}^{*}} M(k) \begin{pmatrix}  0  & - c_{j}^{\dag}e^{-2it \theta(k)}     \\        0  &    0   \end{pmatrix},   \\
                 \end{aligned}
          \end{equation}
          where
  \begin{align}
  &c_{j}= \frac{b(k_{j}) \mathrm{adj}  [  a(k_j)   ] }{\dot{\det}  [  a(k_j)   ]}, \ \  c_{j}^{\dag}= \frac{\mathrm{adj}  [    a^{\dag} (k^{*}_j)  ]  b^{\dag} (k^{*}_j)}{\dot{\det}  [ a^{\dag}(k^{*}_j)  ]}, \ \ \gamma(k)=\gamma^{*}(-k^{*}) \sigma_{2},\\
  &  \theta= \frac{kx}{t}+4k^{3}, \ \  \gamma(k)= b(k) a^{-1}(k), \ \ j=1,\dots, 2N
  \end{align}
\end{itemize}
Substituting the asymptotic form of $M(k;x,t)$ into equation  (\ref{lax2}), we find
\begin{equation}\label{reconstruct}
   q(x,t)=\left(       u(x,t), u^{*}(x,t)    \right)^{T}= 2i \lim_{k \to \infty} (k M(k;x,t))_{12},
\end{equation}
where $u(x,t)$ is the solution of Sasa-Satsuma equation (\ref{Sasa}).

\section{Long time asymptotics in region \uppercase\expandafter{\romannumeral1}:  $x<0, |x/t|=\mathcal{O}(1)$}

\subsection{Deformation of the RH problem}

\subsubsection{Conjugation}
      In this subsection, we first consider the oscillatory term
      \begin{equation}
          e^{2it \theta(k)}=e^{2it(4k^3 + \frac{x}{t} k)}, \quad \theta(k)=4(k^3-3k_{0}^2 k),\label{4.1}
      \end{equation}
one will observe that the long time asymptotic of RHP 1  is influenced by the growth and decay of the oscillatory term.
In this section, our aim is to introduce a transform of $M(k;x,t) \to M^{(1)}(k;x,t)$ so that $M^{(1)}(k;x,t)$ is well  behaved as $\left|  t  \right| \to  \infty$ along the characteristic line.

The  $\theta(k)$  admits   phase points  $\pm k_0 = \pm \sqrt{-\frac{x}{12 t}}$.
 We   mainly focus on the physically interesting region  $0 < k_{0} \leq C$,  where $C$ is a constant.  From (\ref{4.1}),  we have
\begin{equation}
  \mathrm{Re} i\theta(k) =  4  \left(  \mathrm{Im}^2 k  - 3 \mathrm{Re}^2  k  + 3  k_0^2  \right)  \mathrm{Im} k,
\end{equation}
 and the signature  table of $\mathrm{Re} i\theta(k)$ is shown in Figure \ref{figure2}.

\begin{figure}[H]
        \begin{center}
\begin{tikzpicture}
\node[anchor=south west,inner sep=0] (image) at (0,0)
 {\includegraphics[width=8cm,height=6cm]{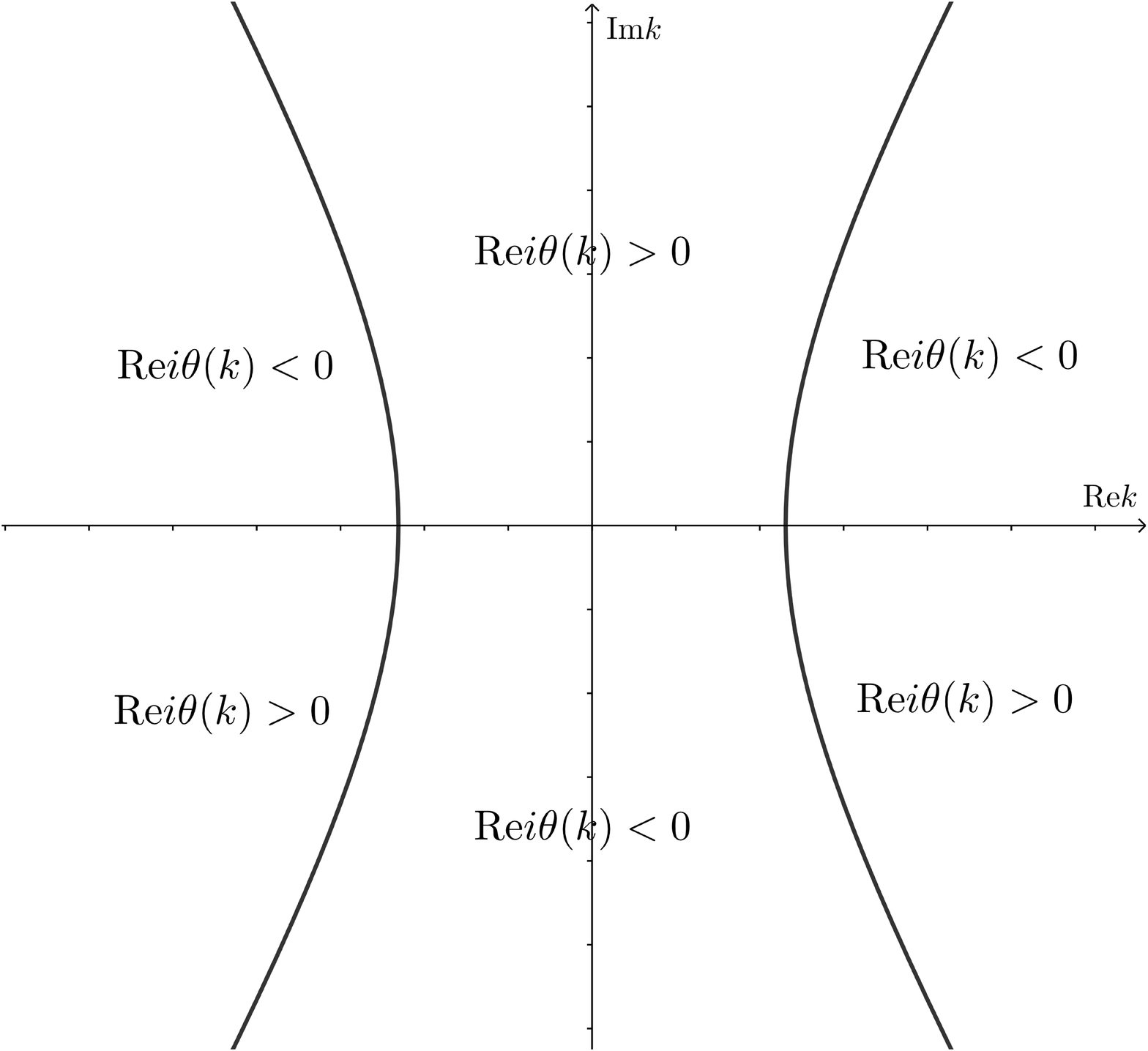}};
    \begin{scope}[x={(image.south east)},y={(image.north west)}]
    \end{scope}
\end{tikzpicture}
      \end{center}
       \caption{ \footnotesize  The signature  table of $\mathrm{Re}i \theta(k).$}
       \label{figure2}
       \end{figure}

  In order to analyze the long time asymptotic behavior of RHP, we first divide the all poles into two parts:
\begin{equation}
    \Delta^{-}= \left\{  k   \left|  3 \mathrm{Re}^2  k  - \mathrm{Im}^2 k  < 3  k_0^2   \right.  \right\}, \quad \Delta^{+}= \left\{  k   \left|  3 \mathrm{Re}^2  k  - \mathrm{Im}^2 k  > 3  k_0^2   \right.  \right\}.
\end{equation}
Due to technical problems,  we assume that there are  no poles  corresponding to  the region $\Delta^{-}$ in subsequent studies of all space-time region $\mathrm{I,II,III}$.

Based on  the signature  table of $\mathrm{Im} \theta(k)$ and the above partition, we can factorize the  jump matrix $J(k,x,t)$ as two different forms:
\begin{equation}
 J(k,x,t)=\left\{
                  \begin{aligned}
                      & \begin{pmatrix}  I& e^{-2it \theta} \gamma^{\dag}(k^{*})  \\ 0  & 1  \end{pmatrix}  \begin{pmatrix}  I&  0  \\ e^{2it \theta} \gamma(k)  & 1  \end{pmatrix},   \qquad  k \in (-\infty,-k_0) \cup (k_0,+\infty),          \\
                      &  \begin{pmatrix}  I & 0 \\ \frac{e^{2it \theta} \gamma(k)}{1+\gamma(k) \gamma^{\dag}(k^{*})} & 1 \end{pmatrix}  \begin{pmatrix}  I+  \gamma^{\dag}(k^{*}) \gamma(k) &  0  \\ 0  & \frac{1}{1+\gamma(k) \gamma^{\dag}(k^{*})}  \end{pmatrix}  \begin{pmatrix}  I& \frac{e^{-2it \theta}  \gamma^{\dag}(k^{*})}{1+\gamma(k)\gamma^{\dag}(k^{*}) }  \\ 0  & 1  \end{pmatrix} , \\
                         &          \qquad \qquad \qquad \qquad \qquad \qquad \qquad \qquad \qquad  k \in (-k_0,k_0).
                  \end{aligned}
 \right.
\end{equation}
We introduce $2 \times 2$ matrix function $\delta(k)$ which satisfy the matrix RHP
\begin{equation}
 \left\{
                  \begin{aligned}
                           & \delta_{+}(k)=\delta_{-}(k)( I+  \gamma^{\dag}(k^{*}) \gamma(k)),   \quad  k \in (-k_0,k_0), \\
                           & \delta(k)= I+\mathcal{O} \left( \frac{1}{k}  \right),   \qquad k  \to \infty,
                  \end{aligned}
 \right.
\end{equation}
Take the determinant on both sides of the above equation,  therefore,
\begin{equation}
 \left\{
                  \begin{aligned}
                           & \det \delta_{+}(k)= \det \delta_{-}(k)( I+  |\gamma(k)|^2  ),   \quad  k \in (-k_0,k_0), \\
                           & \det \delta(k)= 1 + \mathcal{O} \left( \frac{1}{k}  \right),   \qquad k  \to \infty,
                  \end{aligned}
 \right.
\end{equation}
Combining with the fact that $I+  \gamma^{\dag}(k^{*}) \gamma(k)$ is positive definite, the vanishing lemma makes sure of the existence and uniqueness of $\delta(k)$. Moreover, through the Plemelj formula,  we get
\begin{equation}
    \det \delta(k) = \left( \frac{k-k_0}{ k + k_0}  \right)^{i\nu} e^{\mathcal{X}(k)},
\end{equation}
where
\begin{align}
     &\nu= -\frac{1}{2\pi} \log \left( 1+ |\gamma(k_0)|^2  \right), \\
     &\mathcal{X}(k)=\frac{1}{2\pi i} \int_{-k_0}^{k_0} \log \left( \frac{1+ |\gamma(\xi)|^2}{1+ |\gamma(k_0)|^2} \right) \frac{d \xi}{\xi -k}.
\end{align}
By symmetry and uniqueness, we obtain that
\begin{equation}
   \delta(k)=\sigma_2 \delta^{*}(-k^{*}) \sigma_2 = (\delta^{\dag}(k^{*}))^{-1}.
\end{equation}
Moreover, by direct calculation and the maximum principle, we have
\begin{equation}
    |\delta(k)| \lesssim 1, \quad |\det \delta(k) |  \lesssim 1, \quad k \in \mathbb{C},
\end{equation}
where $|A|=(\mathrm{tr} A^{\dag} A)^{\frac{1}{2}}$ denotes the Frobenius norm for any matrix $A$.  Next, we summarize  the properties of $\delta(k)$ and $\det  \delta(k)$ as follows:

 \begin{proposition}
        The matrix function $ \delta(k)$ and scalar function $  \det \delta(k)$  satisfy the following properties
        \begin{itemize}
          \item[(a)]  $ \delta(k)$ and $ \det \delta(k)$  are analytic in $\mathbb{C}\setminus [-k_0,k_0]$.
          \item[(b)] For $k \in \mathbb{C}\setminus [-k_0,k_0]$,\quad $\delta(k)  \delta^{\dag}(k^{*})=I$, \quad $ \det \delta(k) \det \delta^{*}(k^{*})=1$;
          \item[(c)] For $k \in (-k_0,k_0]$,
            \begin{equation}
            \begin{aligned}
           \delta_{+}(k)&=\delta_{-}(k)(1+ \gamma^{\dag}(k) \gamma(k)),  \qquad  \det \delta_{+}(k)&=\det \delta_{-}(k)(1+|\gamma(k)|^2);
            \end{aligned}
            \end{equation}

          \item[(d)] As $|k|\to \infty$ with $|arg(k)|\leq c<\pi$,
          \begin{equation}
          \det \delta(k)=1+\frac{i}{k}\left[     -2 \nu  k_0 + \frac{1}{2 \pi} \int_{-k_0}^{k_0} \log \left( \frac{1+|\gamma(\xi)|^2}{1+|\gamma(k_0)|^2}   \right) d \xi  \right]+ \mathcal{O}(k^{-2});
          \end{equation}
          \item[(d)]Along the ray $k= \pm k_0+e^{i \phi} \mathbb{R}_+$ where $|\phi| \leq  c <\pi$, as $k  \to \pm k_0$
          \begin{equation}
             |   \det \delta(k) - \left( \frac{k-k_0}{ k + k_0}  \right)^{i\nu}  e^{\mathcal{X}(\pm k_0)}          | \lesssim  |k \mp k_0|^{1/2}.
          \end{equation}
        \end{itemize}
    \end{proposition}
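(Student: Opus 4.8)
The plan is to verify the five listed properties in turn, relying on the explicit solution formula $\det\delta(k)=\left(\frac{k-k_0}{k+k_0}\right)^{i\nu}e^{\mathcal{X}(k)}$ and on the symmetry identity $\delta(k)=\sigma_2\delta^{*}(-k^{*})\sigma_2=(\delta^{\dag}(k^{*}))^{-1}$, both recorded just above the statement. Items (a)--(c) are essentially bookkeeping. For (a), $\delta(k)$ solves an RHP whose only jump contour is the segment $[-k_0,k_0]$, and since $I+\gamma^{\dag}(k^{*})\gamma(k)$ is Hermitian positive definite the vanishing lemma has already supplied existence and uniqueness; hence $\delta$ is analytic off the segment and $\det\delta$ is analytic there as a polynomial in analytic entries. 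For (b), rearranging $\delta(k)=(\delta^{\dag}(k^{*}))^{-1}$ gives $\delta(k)\delta^{\dag}(k^{*})=I$, and taking determinants yields $\det\delta(k)\,\overline{\det\delta(k^{*})}=1$, which is the scalar relation written with $\det\delta^{*}(k^{*})$ since $\det\delta^{*}(k^{*})=\overline{\det\delta(k^{*})}$. For (c), the matrix jump is the defining boundary relation restricted to $k\in(-k_0,k_0]$, where $k^{*}=k$; the scalar jump follows by determinants using the matrix determinant lemma: because $\gamma$ is a row vector, $\gamma^{\dag}(k)\gamma(k)$ has rank one and $\det\left(I+\gamma^{\dag}(k)\gamma(k)\right)=1+\gamma(k)\gamma^{\dag}(k)=1+|\gamma(k)|^2$.

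For the large-$k$ expansion (d), I would substitute the explicit formula and expand each factor as $k\to\infty$. Writing $\tfrac{k-k_0}{k+k_0}=\tfrac{1-k_0/k}{1+k_0/k}$ and taking logarithms gives $i\nu\log\tfrac{k-k_0}{k+k_0}=-\tfrac{2i\nu k_0}{k}+\mathcal{O}(k^{-3})$, so the first factor equals $1-\tfrac{2i\nu k_0}{k}+\mathcal{O}(k^{-2})$. Expanding the Cauchy kernel $\tfrac{1}{\xi-k}=-\tfrac1k+\mathcal{O}(k^{-2})$ under the integral defining $\mathcal{X}$ gives $\mathcal{X}(k)=\tfrac{i}{2\pi k}\int_{-k_0}^{k_0}\log\!\left(\tfrac{1+|\gamma(\xi)|^2}{1+|\gamma(k_0)|^2}\right)d\xi+\mathcal{O}(k^{-2})$, whence $e^{\mathcal{X}(k)}=1+\mathcal{X}(k)+\mathcal{O}(k^{-2})$. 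Multiplying the two expansions and collecting the $1/k$ terms reproduces exactly the bracketed coefficient in (d).

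The genuinely delicate item is the endpoint estimate (d'), and I expect it to be the main obstacle. Since $\det\delta(k)-\left(\tfrac{k-k_0}{k+k_0}\right)^{i\nu}e^{\mathcal{X}(\pm k_0)}=\left(\tfrac{k-k_0}{k+k_0}\right)^{i\nu}\left(e^{\mathcal{X}(k)}-e^{\mathcal{X}(\pm k_0)}\right)$, and the prefactor stays bounded along the ray (its modulus is controlled because $\nu\in\mathbb{R}$ and the argument is bounded), everything reduces to a H\"older-$\tfrac12$ bound on $\mathcal{X}(k)-\mathcal{X}(\pm k_0)$, which transfers to $e^{\mathcal{X}}$ because $\mathcal{X}$ is bounded (so $z\mapsto e^{z}$ is Lipschitz on the relevant range, in line with $|\delta|\lesssim1$). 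The crucial structural fact is that the density $\eta(\xi):=\log\!\left(\tfrac{1+|\gamma(\xi)|^2}{1+|\gamma(k_0)|^2}\right)$ vanishes at both endpoints: at $+k_0$ trivially, and at $-k_0$ by the scattering symmetry, which forces $|\gamma(-k_0)|=|\gamma(k_0)|$ since $\sigma_2$ is unitary and conjugation preserves the Frobenius norm. Consequently $|\eta(\xi)|\lesssim|\xi\mp k_0|$ near each endpoint, and writing $\mathcal{X}(k)-\mathcal{X}(k_0)=\tfrac{k-k_0}{2\pi i}\int_{-k_0}^{k_0}\tfrac{\eta(\xi)}{(\xi-k)(\xi-k_0)}\,d\xi$ the vanishing cancels one singular factor, leaving $|\mathcal{X}(k)-\mathcal{X}(k_0)|\lesssim|k-k_0|\int_{-k_0}^{k_0}\tfrac{d\xi}{|\xi-k|}\lesssim|k-k_0|\,\log\tfrac{1}{|k-k_0|}\lesssim|k-k_0|^{1/2}$, with the same argument at $-k_0$. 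The point requiring genuine care is keeping all constants uniform for $|\phi|\le c<\pi$, i.e. controlling the logarithmic integral independently of the approach angle, which is where the off-axis lower bound $|\xi-k|\gtrsim|k-k_0|$ enters.
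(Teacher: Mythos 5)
Your proposal is correct, and it follows the standard route: the explicit Plemelj formula plus the symmetry $\delta(k)=(\delta^{\dag}(k^{*}))^{-1}$ for (a)--(c), the matrix determinant lemma for the rank-one jump, Taylor expansion of the power factor and the Cauchy kernel for (d), and the H\"older-$\tfrac12$ bound via the vanishing of the log-density at both endpoints (using $|\gamma(-k_0)|=|\gamma(k_0)|$) together with the $r\log(1/r)\lesssim r^{1/2}$ estimate for (d'). The paper itself supplies no argument here --- it only cites the analogous Proposition 3.1 of Borghese et al.\ \cite{fNLS} --- and your write-up is essentially that reference's proof with the details filled in.
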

    \begin{proof}
        The proof of  above properties is similar to the proof of Proposition 3.1 provided by Borghese et al  .
   \end{proof}
Now,we define
\begin{equation}
     T(k) = \begin{pmatrix}   T_{1}^{-1}(k) & 0 \\ 0 &   T_2(k)      \end{pmatrix} = \begin{pmatrix}   \delta^{-1}(k) & 0 \\ 0 &   \det  \delta(k)      \end{pmatrix},
\end{equation}
and introduce the matrix transformation
\begin{equation}
     M^{(1)}(k;x,t)= M(k;x,t) T(k), \label{316}
\end{equation}
then $M^{(1)}(k;x,t)$ solves the  following RHP: \\

\noindent\textbf{RHP2}.  Find a matrix-valued function $M^{(1)}(K)=M^{(1)}(K;x,t)$ such that
\begin{itemize}
       \item[(a)]$M^{(1)}(k)$ is analytic in $\mathbb{C}\setminus  \left( \mathbb{R}\cup \mathscr{K} \cup \overline{\mathscr{K}} \right)$;
       \item[(b)]$M^{(1)}(k)$ has the following jump condition $M^{(1)}_+(k)=M^{(1)}_-(k)V^{(1)}(k), \quad  k \in \mathbb{R}$,
       where
        \begin{equation}
          V^{(1)}(k)= \left\{   \begin{aligned}
              &\begin{pmatrix}   I  & T_{1}   T_{2} \gamma^{\dag} (k  ) e^{- 2 it \theta(k)}    \\ 0  & 1        \end{pmatrix}   \begin{pmatrix}   I  &  0    \\  \gamma(k) T_{1}^{-1}  T_{2}^{-1}  e^{2 it \theta(k)}   & 1        \end{pmatrix},
               \quad   k \in \mathbb{R}\setminus [-k_0,k_0],  \\
              & \begin{pmatrix}   I  & 0   \\  \frac{\gamma(k) T_{1-}^{-1}  T_{2-}^{-1}    }{1+\gamma(k)\gamma^{\dag}(k )} e^{2it \theta(k)}   & 1        \end{pmatrix}   \begin{pmatrix}   I  &  \frac{ T_{1+}  T_{2+}  \gamma^{\dag}(k)}{1+\gamma(k) \gamma^{\dag}(k)}e^{-2it \theta(k)}  \\ 0 & 1        \end{pmatrix}, \quad k \in (-k_0,k_0),
            \end{aligned}
             \right.
        \end{equation}
        \item[(c)]$M^{(1)}(k)=I+\mathcal{O}(k^{-1}),	\quad  \text{as} \; k \to  \infty $;
        \item[(d)]$M^{(1)}(k)$  satisfies the following residue conditions at double poles $k_j \in \mathscr{K}$ and $k_j^{*} \in \mathscr{\overline{K}}$:
         \begin{align}
             & \res_{k=k_j} M^{(1)}(k)  =
             \lim_{k\to k_j} M^{(1)} \begin{pmatrix}  0  &  0  \\   c_{j} T_{1}^{-1}  T_{2}^{-1} e^{2it \theta}   & 0      \end{pmatrix}, &  j \in \Delta^{+},      \\
             & \res_{k=k_j^{*}} M^{(1)}(k)  =
             \lim_{k\to k_j^{*}} M^{(1)} \begin{pmatrix}  0  &   -T_{1} T_{2}  c_{j}^{\dag} e^{-2it \theta(k)}  \\  0  & 0      \end{pmatrix},  &  j \in \Delta^{+}.
         \end{align}
  \end{itemize}

\subsubsection{A mixed $\overline{\partial}$-RH problem}
Next, in order to  make continuous extension  to the jump matrix $V^{(1)}(k)$,  we introduce new contours defined as follows:
\begin{equation}
   \begin{aligned}
   &\Sigma_{1}^{\pm}= \pm k_0 + e^{i \frac{(2\pm 1)\pi}{4}}\mathbb{R}_{+}, &\Sigma_{3}^{\pm}= \pm k_0 + e^{i \frac{(2\pm 1)\pi}{4}}h,  \quad h \in (0, \sqrt{2}k_0), \\
   &\Sigma_{2}^{\pm}= \pm k_0 + e^{i \frac{(-2\pm 1)\pi}{4}}\mathbb{R}_{+}, &\Sigma_{4}^{\pm}= \pm k_0 + e^{i \frac{(-2\mp 1)\pi}{4}}h,  \quad h \in (0, \sqrt{2}k_0), \\
   \end{aligned}
\end{equation}
Therefore, the complex plane $\mathbb{C}$ is divided into eight open  domains. Naturally, we apply  $\overline{\partial}$-steepest descent method to extend the scattering data into eight regions  so that the matrix function has no jumps on $\mathbb{R}$. Denote these regions according to  symmetry relation by $\Omega_{j}^{\pm}, j=1,2,3,4$ and $\Omega_{5}, \Omega_{6}$, which are shown in Figure. \ref{figure2}.

\begin{figure}[H]
        \begin{center}
\begin{tikzpicture}
\node[anchor=south west,inner sep=0] (image) at (0,0)
 {\includegraphics[width=8cm,height=7cm]{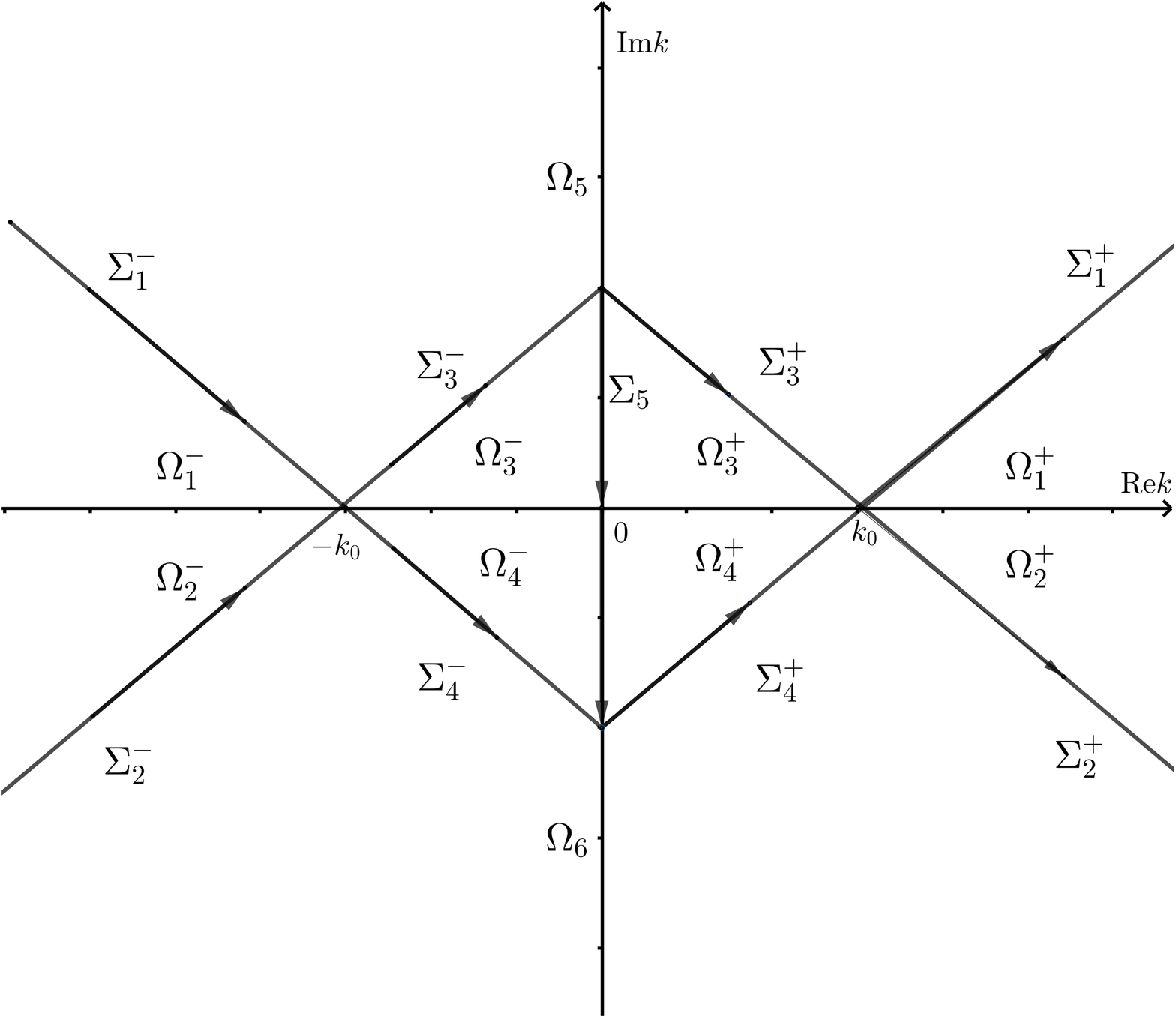}};
    \begin{scope}[x={(image.south east)},y={(image.north west)}]
    \end{scope}
\end{tikzpicture}
      \end{center}
       \caption{\footnotesize  Deformation from $\mathbb{R}$ to the new contour $\Sigma$.}
        \label{figure2}
        \end{figure}

Let
\begin{equation}
   \rho= \frac{1}{2} \min_{\lambda \neq \mu \in \mathscr{K} \bigcup \mathscr{\overline{K}}} |\lambda - \mu|,
\end{equation}
for any $k_j \in \mathscr{K}$, since the discrete spectrum conjugate occur in pairs and not on $\mathbb{R}$, we get that $dist(\mathscr{K},\mathbb{R}) \geq \rho$. In what follows, we introduce the characteristic function near the discrete spectrum
\begin{equation}
    \mathcal{X}_{\mathscr{K}}(k)=\left\{ \begin{aligned}
              &  1,  &  \mathrm{dist}(k, \mathscr{K} \cup  \mathscr{\overline{K}})<\frac{\rho}{3},\\
              &  0,  & \mathrm{dist}(k, \mathscr{K} \cup  \mathscr{\overline{K}})> \frac{2\rho}{3},
    \end{aligned}
      \right.
\end{equation}
In  order  to deform  the contour $\mathbb{R}(k)$ to the contour $\Sigma^{(2)}$, we make the following matrix transformation:
\begin{equation}
    M^{(2)}(k)= M^{(1)}(k) R^{(2)}(k), \label{323}
\end{equation}
where $R^{(2)}(k)$ is chosen to satisfy three specific properties:
\begin{itemize}
  \item  Compared to $M^{(1)}(k)$, $M^{(2)}(k)$  has no more jumps on $\mathbb{R}$;
  \item  The norm  of $R^{(2)}(k)$ can be well controlled;
  \item  The transformation keep the residue conditions unchanged,
\end{itemize}
Based on the above analysis, we define $R^{(2)}$ as follows:
\begin{equation}
    R^{(2)}(k)= \left\{   \begin{aligned}   &   \begin{pmatrix}  I & 0 \\ R_j^{\pm} e^{2it \theta(k)} &  1 \end{pmatrix},  &    j=1,4, \\
     & \begin{pmatrix}  I &  R_j^{\pm} e^{-2it \theta(k)} \\ 0 &  1 \end{pmatrix}, &   j=2,3, \\
       &\begin{pmatrix}  I &  0 \\ 0 &  1 \end{pmatrix}, & otherwise,
      \end{aligned}   \right.
\end{equation}
where vector function $R_j(k)$ are defined in following proposition:
    \begin{proposition}\label{proR}
       There exists a function $R_j^{\pm}$: $\overline{\Omega}_j^{\pm}  \to C$, $j=1,2,3,4$ such that
       \begin{align}
       &R_1^{\pm}(z)=\Bigg\{\begin{array}{ll}
       -\gamma(k) T_{1}^{-1}(k) T_{2}^{-1}(k), & \quad \qquad  k \in I_{\pm},\\
       -\gamma(\pm k_0) T_{1}^{-1}(k)       e^{-\mathcal{X}(\pm k_0)}   \left(  \frac{k-k_0}{k+k_0} \right)^{-i\nu}         (1-\mathcal{X}_{\mathscr{K}}(k)),  &  \quad \qquad k \in \Sigma_1^{\pm},\\
       \end{array} \\
       &R_2^{\pm}(z)=\Bigg\{\begin{array}{ll}
         T_{1}(k) T_{2}(k)  \gamma^{\dag} (k), &   ~~\qquad  \quad \qquad k  \in  I_{\pm}, \\
        T_{1}(k)   e^{ \mathcal{X}(\pm k_0)}  \left(  \frac{k-k_0}{k+k_0} \right)^{i\nu}   \gamma^{\dag}(\pm k_0)(1-\mathcal{X}_{\mathscr{K}}(k)),   & ~ ~~~ ~~\quad \qquad \quad k\in \Sigma_2^{\pm},\\
       \end{array} \\
       &R_3^{\pm}(z)=\Bigg\{\begin{array}{ll}
        - T_{1+}(k)  T_{2+}(k)    \frac{\gamma^{\dag}(k)}{1+\gamma(k)\gamma^{\dag}(k)},  &  ~~~\quad  k  \in I, \\
        - T_{1}(k)    e^{ \mathcal{X}(\pm k_0)} \left(  \frac{k-k_0}{k+k_0} \right)^{i\nu} \frac{\gamma^{\dag}(\pm k_0)}{1+\gamma(\pm  k_0)\gamma^{\dag}(\pm k_0)}  (1-\mathcal{X}_{\mathscr{K}}(k)),   & ~~~\quad k\in \Sigma_3^{\pm},\\
       \end{array}\\
       &R_4^{\pm }(z)=\Bigg\{\begin{array}{ll}
         \frac{\gamma (k)}{1+\gamma(k)\gamma^{\dag}(k)}     T_{1-}^{-1}(k)  T_{2-}^{-1}(k), &   ~~  k  \in  I, \\
        \frac{\gamma (\pm k_0)}{1+\gamma(\pm  k_0)\gamma^{\dag}(\pm  k_0)}     T_{1}^{-1}(k)  e^{- \mathcal{X}(\pm k_0)}   \left(  \frac{k-k_0}{k+k_0} \right)^{-i\nu}  (1-\mathcal{X}_{\mathscr{K}}(k)), & ~~ k  \in \Sigma_4^{\pm},\\
       \end{array}\\
       \end{align}	
      and $R_j$  admit estimates
      \begin{align}
       &|R_j^{\pm}(k)|\lesssim 1 +\langle \text{\rm{Re}}(k)\rangle^{-1/2}, &  j=1,2,3,4,  \\
       &|\bar{\partial}R_j^{-}(k)|\lesssim|\bar{\partial}\chi_\mathscr{K}(k)|+|(p_j^{\pm})'(\text{\rm{Re}}k)|+|k+k_0|^{-1/2}, & j=1,2,3,4,\\
       &|\bar{\partial}R_j^{+}(k)|\lesssim|\bar{\partial}\chi_\mathscr{K}(k)|+|(p_j^{\pm})'(\text{\rm{Re}}k)|+|k-k_0|^{-1/2}, & j=1,2,3,4,\\
      & \bar{\partial}R_j(k)=0,\hspace{0.5cm}\text{if } k\in \Omega_5 \cup \Omega_6 \;  \quad \mathrm{     or   }  \quad \;\text{\rm{dist}}(k,\mathscr{K}\cup \overline{\mathscr{K}})<\rho/3.
    \end{align}
where
\begin{align}
   & p_1^{\pm}(k) =\gamma(k),   \qquad p_{3}^{\pm} (k)= \frac{\gamma^{\dag}(k^{*})}{1+\gamma(k)\gamma^{\dag}(k^{*})}, \nonumber \\
   &   p_2^{\pm} (k) =\gamma^{\dag}(k^{*}),  \qquad   p_{4}^{\pm}(k)= \frac{\gamma(k)}{1+\gamma(k)\gamma^{\dag}(k^{*})}, \nonumber\\
   & I=(-k_0,k_0), \quad  I_{-}=(-\infty, -k_0),  \quad I_{+}=(k_0,+\infty), \nonumber
\end{align}

   \end{proposition}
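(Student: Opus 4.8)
The plan is to define each $R_j^{\pm}$ by an explicit non-analytic interpolation between the two prescribed boundary values and then to read off both the sup-bound and the $\bar\partial$-bound directly from that formula. The guiding principle is to keep the singular but \emph{analytic} backbone $T_1^{-1}(k)\big(\tfrac{k-k_0}{k+k_0}\big)^{\mp i\nu}$ intact inside each open sector (it is holomorphic on $\mathbb{C}\setminus[-k_0,k_0]$, hence on $\Omega_j^{\pm}$), and to interpolate only the non-analytic scalar factor, e.g. $\gamma(\mathrm{Re}\,k)e^{-\mathcal{X}(\mathrm{Re}\,k)}$, between its genuine value on the real segment and its frozen value at the phase point $\pm k_0$. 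Concretely, writing $k=k_0+\ell e^{i\phi}$ in the sector $\Omega_1^{+}$ and choosing a smooth angular cutoff $\mathcal{K}_1(\phi)$ equal to $1$ on the real direction and $0$ on $\Sigma_1^{+}$, I would set
\[
R_1^{+}(k)=\Big[-\gamma(\mathrm{Re}\,k)\,e^{-\mathcal{X}(\mathrm{Re}\,k)}\,\mathcal{K}_1(\phi)-\gamma(k_0)\,e^{-\mathcal{X}(k_0)}\big(1-\mathcal{K}_1(\phi)\big)\Big]\,T_1^{-1}(k)\Big(\tfrac{k-k_0}{k+k_0}\Big)^{-i\nu}\big(1-\mathcal{X}_{\mathscr{K}}(k)\big),
\]
with analogous formulas in the remaining sectors dictated by the functions $p_j^{\pm}$; in $\Omega_5\cup\Omega_6$ and on the discs $\mathrm{dist}(k,\mathscr{K}\cup\overline{\mathscr{K}})<\rho/3$ I set $R_j\equiv0$. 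The factor $1-\mathcal{X}_{\mathscr{K}}$ is exactly what guarantees that the transformation (\ref{323}) leaves the residue data untouched.

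Next I would verify the boundary matching and propagate by symmetry. Since $T_2^{-1}(k)=e^{-\mathcal{X}(k)}\big(\tfrac{k-k_0}{k+k_0}\big)^{-i\nu}$, evaluating the formula on $I_{+}$ (where $\mathcal{K}_1=1$) returns $-\gamma(k)T_1^{-1}(k)T_2^{-1}(k)$, while on $\Sigma_1^{+}$ (where $\mathcal{K}_1=0$) it returns the frozen expression in the statement; the same bookkeeping reproduces $R_2,R_3,R_4$, and the whole family is then carried to the sectors at $-k_0$ via the symmetries $\gamma(k)=\gamma^{*}(-k^{*})\sigma_2$ and $\delta(k)=\sigma_2\delta^{*}(-k^{*})\sigma_2$. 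For the pointwise bound $|R_j^{\pm}(k)|\lesssim 1+\langle\mathrm{Re}\,k\rangle^{-1/2}$ I would merely collect the individual factors: $\gamma$ is bounded (indeed Schwartz), $\big|\big(\tfrac{k-k_0}{k+k_0}\big)^{\mp i\nu}\big|=e^{\pm\nu\arg(\cdots)}$ is bounded, and $|\delta|=|T_1^{-1}|\lesssim1$, $|\det\delta|\lesssim1$ by the preceding Proposition; the denominators $1+\gamma\gamma^{\dag}$ entering $R_3,R_4$ are bounded below because $I+\gamma^{\dag}\gamma$ is positive definite.

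The heart of the argument is the $\bar\partial$ computation. Because the backbone is holomorphic in the open sectors, $\bar\partial R_j$ sees only the bracketed scalar and the cutoff $\mathcal{X}_{\mathscr{K}}$, and differentiating produces three kinds of terms: (i) $\bar\partial\mathcal{X}_{\mathscr{K}}$, giving the $|\bar\partial\mathcal{X}_{\mathscr{K}}|$ contribution, supported in the annuli $\rho/3<\mathrm{dist}(k,\mathscr{K}\cup\overline{\mathscr{K}})<2\rho/3$; (ii) $\bar\partial\big(p_j^{\pm}(\mathrm{Re}\,k)\big)=\tfrac12(p_j^{\pm})'(\mathrm{Re}\,k)$, giving the $|(p_j^{\pm})'(\mathrm{Re}\,k)|$ term; and (iii) the angular derivative $\bar\partial\mathcal{K}_j$. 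For the last I would use $\bar\partial=\tfrac12e^{i\phi}\big(\partial_\ell+\tfrac{i}{\ell}\partial_\phi\big)$, so that $|\bar\partial\mathcal{K}_j|\lesssim\ell^{-1}=|k\mp k_0|^{-1}$, multiplied by the difference between the real-axis value and the frozen value at $\pm k_0$; by the Hölder-$\tfrac12$ estimate for $\det\delta$ near $\pm k_0$ in the preceding Proposition this difference is $\lesssim|k\mp k_0|^{1/2}$, which yields exactly the $|k\mp k_0|^{-1/2}$ term. The vanishing $\bar\partial R_j=0$ in $\Omega_5\cup\Omega_6$ and near the poles is immediate from $R_j\equiv0$ there.

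The main obstacle I anticipate is precisely the behavior at the stationary phase points $\pm k_0$: the interpolated scalar is only Hölder-$\tfrac12$ there rather than Lipschitz, because $\mathcal{X}(k)$ is a Cauchy integral over $[-k_0,k_0]$, so the angular part of $\bar\partial R_j$ is genuinely singular of order $|k\mp k_0|^{-1/2}$. The construction must be arranged so that this singularity is no worse than $-1/2$ (hence locally area-integrable) and pairs correctly with $e^{\pm2it\theta}$ in the subsequent $\bar\partial$-estimates; balancing the freezing of $\gamma$ and of $e^{\mathcal{X}}$ against the retained analytic backbone, while simultaneously respecting the symmetry relations and the soliton cutoff $\mathcal{X}_{\mathscr{K}}$, is where the real care is needed. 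The remaining checks—boundary matching and the sup bound—are routine once the properties of $\delta$ from the preceding Proposition are in hand.
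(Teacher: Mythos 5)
Your construction is correct and is essentially the standard $\overline{\partial}$-extension argument of Borghese--Jenkins--McLaughlin \cite{fNLS}, which is exactly what the paper invokes (its ``proof'' is the single sentence deferring to that reference): an angular cutoff interpolating between the boundary data on $I_{\pm}$ and the frozen value on $\Sigma_j^{\pm}$, an analytic backbone $\delta(k)\left(\frac{k-k_0}{k+k_0}\right)^{\mp i\nu}$ carrying the singular factors, and a $\overline{\partial}$-derivative producing precisely the three terms $|\overline{\partial}\mathcal{X}_{\mathscr{K}}|$, $|(p_j^{\pm})'(\mathrm{Re}\,k)|$, and $|k\mp k_0|^{-1/2}$. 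The only refinement worth recording is that the $|k\mp k_0|^{1/2}$ modulus of the interpolated scalar comes from two sources: the H\"older-$\tfrac12$ bound on $e^{\mathcal{X}}$ near $\pm k_0$ (Proposition 1(e)), as you say, and the difference $|\gamma(\mathrm{Re}\,k)-\gamma(\pm k_0)|\lesssim|k\mp k_0|^{1/2}\|\gamma'\|_{L^2}$ via Cauchy--Schwarz, which is harmless here since $\gamma$ is Schwartz.
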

    The  above proposition can be shown  in a similar  way  as  the reference  \cite{fNLS}.

We make the  transformation (\ref{323}) and  find  that $M^{(2)}$ satisfies a special $\overline{\partial}$-RH problem.\\

\noindent\textbf{$\overline{\partial}$-RHP1}.  Find a matrix-valued function $M^{(2)}(k)=M^{(2)}(k;x,t)$ which satisfies
   \begin{itemize}
       \item[(a)] $ M^{(2)}(k)$ is continuous in $\mathbb{C}\setminus  \left( \Sigma^{(2)}\cup \mathscr{K} \cup \overline{\mathscr{K}} \right)$.
       \item[(b)] $M^{(2)}(k)$ has the following jump condition $M^{(2)}_+(k)=M^{(2)}_-(k)V^{(2)}(k), \hspace{0.5cm}k \in \Sigma^{(2)}$,
       where
       \begin{equation}
          V^{(2)}(k)= \left\{  \begin{aligned}
             \begin{pmatrix}    I & 0 \\ -R_1^{\pm} e^{2it \theta} & 1 \end{pmatrix},   \quad  k \in  \Sigma_1^{\pm} , &  \quad \begin{pmatrix}    I & -R_3^{\pm} e^{-2it \theta} \\ 0& 1 \end{pmatrix},   \quad  k \in  \Sigma_3^{\pm},   \\
              \begin{pmatrix}    I & R_2^{\pm}  e^{-2it \theta} \\ 0& 1 \end{pmatrix},   \quad  k \in  \Sigma_2^{\pm} , &  \quad \begin{pmatrix}    I & 0 \\ R_4^{\pm} e^{2it \theta} & 1 \end{pmatrix},   \quad  k \in  \Sigma_4^{\pm},   \\
            \end{aligned}
             \right.
       \end{equation}
       As $k \in (-ik_0, ik_0)$, the jump matrix $V^{(2)}$ has the following form
       \begin{equation}\label{jump2}
       V^{(2)}(k)= \left\{  \begin{aligned}
            &\begin{pmatrix}    I & (R_{3}^{+}-R_{3}^{-}) e^{2it \theta} \\ 0 & 1 \end{pmatrix},   &  k \in  (ik_0 \tan(\frac{\pi}{12}), ik_0),   \\
           & \begin{pmatrix}    I & 0 \\ (R_{4}^{+}-R_{4}^{-}) e^{2it \theta}   & 1 \end{pmatrix}, &  k \in  (-ik_0 \tan(\frac{\pi}{12}),-ik_0), \\
           &   \begin{pmatrix}    I & 0 \\ 0   & 1 \end{pmatrix},    & k \in  (-ik_0 \tan(\frac{\pi}{12}), k_0 \tan(\frac{\pi}{12})),
            \end{aligned}
            \right.
       \end{equation}
      \item[(c)] $M^{(2)}(k)\to  I$, \quad $ k \to \infty$;
      \item[(d)] For any $k \in \mathbb{C} \setminus \left(   \Sigma^{(2)} \bigcup \mathscr{K} \bigcup \mathscr{\overline{K}}        \right)$, we have
          \begin{equation}
             \overline{\partial}M^{(2)}(k)=M^{(1)}(k) \overline{\partial}R^{(2)}(k),
          \end{equation}
       where
       \begin{equation}
          \overline{\partial}R^{(2)}(k) =  \left\{
              \begin{aligned}
                   &\begin{pmatrix}     0 &  0 \\ \overline{\partial}R_j^{\pm} e^{2it \theta} & 0        \end{pmatrix}, & j=1, 3, \\
                  & \begin{pmatrix}     0 & \overline{\partial}R_j^{\pm}  e^{-2it \theta} \\  0 & 0        \end{pmatrix}, &  j=2,4,  \\
                  &  \begin{pmatrix}     0 & 0 \\  0 & 0        \end{pmatrix}, &  otherwise,
              \end{aligned}
             \right.
       \end{equation}
     \item[(e)] $M^{(2)}(k;x,t)$ has simple poles at $k_j$  and $k_j^{*}$ with
                 \begin{align}
             & \res_{k=k_j} M^{(2)}(k)  =
             \lim_{k\to k_j} M^{(2)} \begin{pmatrix}  0  &  0  \\   c_{j} T_{1}^{-1}  T_{2}^{-1} e^{2it \theta}   & 0      \end{pmatrix}, &  j \in \Delta^{+},      \\
             & \res_{k=k_j^{*}} M^{(2)}(k)  =
             \lim_{k\to k_j^{*}} M^{(2)} \begin{pmatrix}  0  &   -T_{1} T_{2}  c_{j}^{\dag} e^{-2it \theta(k)}  \\  0  & 0      \end{pmatrix},  &  j \in \Delta^{+}.
         \end{align}
  \end{itemize}

\subsection{Analysis on  a  pure RH problem}
In  this section, our aim is to decompose $\overline{\partial}$-RHP1 into a pure RHP with  $\overline{\partial} R^{(2)} = 0$  and a pure $\overline{\partial}$-problem with $\overline{\partial} R^{(2)} \neq 0$. We express the decomposition as follows:
\begin{equation}
   M^{(2)}(k;x,t)=   \left\{       \begin{aligned}
                   & \overline{\partial} R^{(2)} = 0      \to  M^{(2)}_{RHP}, \\
                   & \overline{\partial} R^{(2)} \neq 0     \to  M^{(3)}= M^{(2)} M^{(2)-1}_{RHP},
                    \end{aligned}      \right. \label{340}
\end{equation}
here $M^{(2)}_{RHP}(k)$  corresponds to the pure  RHP part which has the same poles and  residue condition with $M^{(2)}(k)$,
and $M^{(3)}(k)$  corresponds to  the pure  $\overline{\partial}$ part without jumps and poles.
Now, we first consider the pure RHP part $M^{(2)}_{RHP}(k)$, which solves the following RHP: \\

 \noindent\textbf {RHP3}.  Find a matrix-valued function $M^{(2)}_{RHP} (k)$  which satisfies
   \begin{itemize}
       \item[(a)] $ M^{(2)}_{RHP}(k)$ is continuous in $\mathbb{C}\setminus  \left( \Sigma^{(2)}\cup \mathscr{K} \cup \overline{\mathscr{K}} \right)$.
       \item[(b)] $M^{(2)}_{RHP}(k)$ has the following jump condition $M^{(2)}_{+RHP}(k)=M^{(2)}_{-RHP}(k)V^{(2)}(k), \hspace{0.5cm}k \in \Sigma^{(2)}$,
      \item[(c)] $M^{(2)}_{RHP}(k)\to  I$, \quad $ k \to \infty$;
      \item[(d)] $\overline{\partial}R^{(2)}=0$,  \quad $k  \in  \mathbb{C}\setminus  \left( \Sigma^{(2)}\cup \mathscr{K} \cup \overline{\mathscr{K}} \right)$.
  \end{itemize}

Next, we managed to separate the jumps and  poles into two parts. Define  open neighborhood of the stationary point $\pm k_0$
\begin{equation}
       \mathcal{U}_{\pm k_0} = \left\{   k : |k \pm k_0| < \rho/2        \right\},
\end{equation}
Moreover, we decomposition  $M^{(2)}_{RHP}(k)$ into two parts:
\begin{equation}
    M^{(2)}_{RHP}(k)= \left\{
                                   \begin{aligned}
                                   & Er(k) M^{(out)}(k),  & k \in \mathbb{C} \setminus \mathcal{U}_{\pm k_0}, \\
                                   & Er(k) M^{(LC)}(k)= Er(k) M^{(out)}(k) M^{(SA)}(k),    &  k \in  \mathcal{U}_{\pm k_0},
                                   \end{aligned}
                \right.\label{342}
\end{equation}
where $M^{(out)}$ corresponds to the pure solition solutions outside the neighborhood $\mathcal{U}_{\pm k_0}$,  which is  defined in $\mathbb{C}$ and has only 
 discrete spectrum with no jumps. $M^{(SA)}$ is the model RHP which considered by Liu   in \cite{Liuhuan}, which is defined in $\mathcal{U}_{\pm k_0}$ without discrete spectrum.  \\

 \noindent\textbf{RHP4}.  Find a matrix-valued function $M^{(out)}(k|\sigma_{d}^{out}) $  which satisfies
   \begin{itemize}
       \item[(a)] $M^{(out)}(k|\sigma_{d}^{out})$ is continuous in $\mathbb{C}\setminus  \left( \Sigma^{(2)}\cup \mathscr{K} \cup \overline{\mathscr{K}} \right)$.
      \item[(b)] $M^{(out)}(k|\sigma_{d}^{out}) \to  I$, \quad $ k \to \infty$;
      \item[(c)]  $M^{(out)}(k|\sigma_{d}^{out})$ has simple poles at $k_j$  and $k_j^{*}$ with
         \begin{align}
             & \res_{k=k_j} M^{(out)}  =
            \lim_{k\to k_j}M^{(out)}(k|\sigma_{d}^{out})  \begin{pmatrix}  0  &  0  \\   c_{j} T_{1}^{-1}  T_{2}^{-1} e^{2it \theta}   & 0      \end{pmatrix}, &  j \in \Delta^{+},      \\
             & \res_{k=k_j^{*}} M^{(out)}  =
             \lim_{k\to k_j^{*}}M^{(out)}(k|\sigma_{d}^{out})  \begin{pmatrix}  0  &   -T_{1} T_{2}  c_{j}^{\dag} e^{-2it \theta(k)}  \\  0  & 0      \end{pmatrix},  &  j \in \Delta^{+}.
         \end{align}\nonumber
  \end{itemize}

\begin{proposition}
 The  jump matrix  $V^{(2)}(k)$ in the above RHP satisfies the following estimate
 \begin{equation*}
  || V^{(2)}(k) -I ||_{L^{\infty}(\Sigma^{(2)})}= \left\{
                      \begin{aligned}
                      & \mathcal{O} \left(            e^{-4\sqrt{2} t |k \mp k_0|^3 - 24 t k_0 |k \mp k_0|^2 }   \right),&  |k \mp k_0|> \rho /2,  \quad k \in \Sigma_1^{\pm}, \Sigma_2^{\pm},  \\
                      &  \mathcal{O} \left(            e^{ -16t k_0^2 |k  \mp k_0| }   \right),      & |k \mp k_0|> \rho /2,  \quad k \in \Sigma_3^{\pm},  \Sigma_4^{\pm},   \\
                      &     0,   &   k \in  [-i k_0  \tan(\frac{\pi}{12}), i k_0 \tan(\frac{\pi }{12})],\\
                      &  \mathcal{O} \left(            e^{  -14 t \mathrm{Im}^3 k }   \right),  &   k \in  [ \pm i  k_0,  \pm i k_0 \tan(\frac{\pi}{12})].
                      \end{aligned}
 \right.
 \end{equation*}
\end{proposition}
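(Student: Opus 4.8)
The plan is to reduce the entire estimate to the control of the scalar oscillatory factor $e^{\pm 2it\theta(k)}$. On each ray of $\Sigma^{(2)}$ the matrix $V^{(2)}(k)-I$ is nilpotent with a single nonzero off-diagonal entry of the form $\pm R_j^{\pm}(k)e^{\pm 2it\theta(k)}$, so that $\|V^{(2)}(k)-I\| = |R_j^{\pm}(k)|\,|e^{\pm 2it\theta(k)}|$ in the Frobenius norm. By Proposition~\ref{proR} the amplitudes satisfy $|R_j^{\pm}(k)|\lesssim 1+\langle \mathrm{Re}\,k\rangle^{-1/2}$, which is uniformly bounded once $k$ is kept away from $\pm k_0$; hence everything comes down to bounding $|e^{\pm 2it\theta(k)}| = e^{\pm 2t\,\mathrm{Re}(i\theta(k))}$ on each piece, the sign in the exponent being dictated by the signature table of $\mathrm{Re}(i\theta(k)) = 4(\mathrm{Im}^2 k-3\mathrm{Re}^2 k+3k_0^2)\mathrm{Im}\,k$. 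Throughout I would use the exact expansion about the stationary points $\theta(k)-\theta(\pm k_0) = \pm 12k_0(k\mp k_0)^2+4(k\mp k_0)^3$.

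First I would treat the unbounded rays $\Sigma_1^{\pm},\Sigma_2^{\pm}$. Parametrizing $k=\pm k_0+\ell e^{i\phi}$ with $\ell=|k\mp k_0|$ and inserting the diagonal opening angle into the expansion above, a direct computation shows that on the ray carrying the decaying exponential the exponent $\pm 2t\,\mathrm{Re}(i\theta)$ equals $-2t(12k_0\ell^2+2\sqrt2\,\ell^3)$. Consequently $|e^{\pm 2it\theta}| = e^{-24tk_0\ell^2-4\sqrt2\,t\ell^3}$, which is precisely the first line of the claim, and the $\langle\mathrm{Re}\,k\rangle^{-1/2}$ part of $R_j^{\pm}$ is harmless on $|k\mp k_0|>\rho/2$.

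Next I would handle the finite lens edges $\Sigma_3^{\pm},\Sigma_4^{\pm}$, which join $\pm k_0$ to $\pm ik_0$. The same substitution gives $\mathrm{Re}(i\theta) = \pm(12k_0\ell^2-2\sqrt2\,\ell^3) = \pm 2\ell^2(6k_0-\sqrt2\,\ell)$ with $\ell\in(0,\sqrt2\,k_0)$; since $\sqrt2\,\ell<2k_0$ on the whole segment, the factor $6k_0-\sqrt2\,\ell$ stays above $4k_0$, so $\mathrm{Re}(i\theta)$ keeps a fixed sign. Restricting $\ell$ to the compact range $\rho/2<\ell<\sqrt2\,k_0$ then furnishes a lower bound of the form $\mathrm{Re}(i\theta)\gtrsim k_0^2|k\mp k_0|$, yielding the linear decay $e^{-16tk_0^2|k\mp k_0|}$ of the second line. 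On the central imaginary-axis segment $[-ik_0\tan\tfrac{\pi}{12},\,ik_0\tan\tfrac{\pi}{12}]$ the continuous extensions from the two sides cancel and the jump is the identity, so the contribution vanishes identically. Finally, on the remaining imaginary-axis pieces $[\pm ik_0,\,\pm ik_0\tan\tfrac{\pi}{12}]$ I would set $k=\pm is$ and compute $\mathrm{Re}(i\theta) = 4(s^3+3k_0^2 s)$, whence $|e^{\pm 2it\theta}| = e^{-8t(s^3+3k_0^2 s)}$; since $s\le k_0$ forces $8(s^3+3k_0^2 s)\ge 14s^3=14\,\mathrm{Im}^3 k$, this produces the last line $\mathcal{O}(e^{-14t\,\mathrm{Im}^3 k})$.

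I expect the main obstacle to be bookkeeping rather than analysis. One must carefully track which of $e^{2it\theta}$ and $e^{-2it\theta}$ is the decaying exponential on each of the eight rays, so that every sign prescribed by the signature table is respected, and on the finite lens edges one must extract a clean lower bound for $\mathrm{Re}(i\theta)$ precisely where the quadratic and cubic terms in $\ell$ carry opposite signs and could in principle cancel. The amplitude bounds of Proposition~\ref{proR} make the reduction to these scalar phase estimates immediate, so no further functional-analytic input is required beyond the elementary inequalities above.
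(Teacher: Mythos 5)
Your proposal is correct and follows essentially the same route as the paper: the paper's own (two-line) proof likewise expands the phase as $i\theta(k)=4i\bigl((k\mp k_0)^3\pm 3k_0(k\mp k_0)^2\mp 2k_0^3\bigr)$ about the stationary points, invokes the boundedness of the non-exponential factors $R_j^{\pm}$ from Proposition~\ref{proR}, and reads off $\mathrm{Re}(i\theta)$ on each ray; your computation simply makes the ray-by-ray sign bookkeeping explicit. The constants you obtain ($-4\sqrt{2}t\ell^3-24tk_0\ell^2$ on the unbounded rays, the fixed sign of $2\ell^2(6k_0-\sqrt{2}\ell)$ on the lens edges, and $8(s^3+3k_0^2s)\ge 14s^3$ on the imaginary segments) match the stated estimates at the same level of precision as the paper itself.
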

\begin{proof}
     Note that the phase function $\theta(k)$ can be written as
       \begin{equation}
           i \theta(k) = 4 i t \left(  (k\mp k_0)^3 \pm 3 k_0 (k \mp k_0)^2  \pm  2k_0^3 \right),
       \end{equation}
     Combine the above form and the boundedness of non-exponential term in $R_j(k)$, we obtain the final estimate of $|| V^{(2)}(k) -I ||_{L^{\infty}(\Sigma^{(2)})}$.
\end{proof}
Thus, the jump matrix outside the neighborhood $\mathcal{U}_{\pm k_0}$ will   decay exponentially to the identity matrix at $t \to +\infty$, we can ignore the jump condition of $M^{(2)}_{RHP}$ on $\Sigma_{2}$. Here, the main distribution to the RHP are from the soliton  solutions  corresponding  to the scattering data
\begin{equation}
   \sigma_{d}^{(out)}= \left\{ (k_j, \widetilde{c}_j), \quad k_j \in \mathscr{K}          \right\}_{k=1}^{2N},  \qquad
   \widetilde{c}_j(k_j)=c_j \delta^{-1}(k_j) (\det \delta(k_j))^{-1}.
\end{equation}
Moreover, outside $\mathcal{U}_{\pm k_0}$,  the error between  $M^{(2)}_{RHP}(k)$ and $M^{(out)}(k)$ can be expressed by the error matrix function $E(k)$.

\subsubsection{Soliton solutions }

In this subsection, we build a outer model RH problem and show that its solution can be approximated with a finite  sum of solitons. We first recall the RHP corresponding to the matrix function $M^{(out)}(k;\sigma_{d}^{out})$: \\

 \noindent\textbf{RHP5}.  Find a matrix-valued function $M^{(out)}(k|\sigma_{d}^{out}) $  which satisfies
   \begin{itemize}
       \item[(a)] $M^{(out)}(k|\sigma_{d}^{out})$ is continuous in $\mathbb{C}\setminus  \left( \Sigma^{(2)}\cup \mathscr{K} \cup \overline{\mathscr{K}} \right)$.
      \item[(b)] $M^{(out)}(k|\sigma_{d}^{out}) \to  I$, \quad $ k \to \infty$;
      \item[(c)]  $M^{(out)}(k|\sigma_{d}^{out})$ has simple poles at $k_j$  and $k_j^{*}$ with
         \begin{align}
             & \res_{k=k_j} M^{(out)}  =
            \lim_{k\to k_j}M^{(out)}(k|\sigma_{d}^{out})  \begin{pmatrix}  0  &  0  \\   c_{j} T_{1}^{-1}  T_{2}^{-1} e^{2it \theta}   & 0      \end{pmatrix}, &  j \in \Delta^{+},      \\
             & \res_{k=k_j^{*}} M^{(out)}  =
             \lim_{k\to k_j^{*}}M^{(out)}(k|\sigma_{d}^{out})  \begin{pmatrix}  0  &   -T_{1} T_{2}  c_{j}^{\dag} e^{-2it \theta(k)}  \\  0  & 0      \end{pmatrix},  &  j \in \Delta^{+}.
         \end{align}
  \end{itemize}
In order to show the existence and uniqueness of solution corresponding to the above RHP, we need to study the existence and uniqueness of RHP1 in the reflectionless case. In this special  case, $M(k;x,t)$ has no contour, the RHP1 reduces to the following RH problem.\\

 \noindent\textbf{RHP6}.  Given scattering data $\sigma_{d}=\{(k_j,c_j) \}_{k=1}^{2N}$ and $\mathscr{K}=\left\{k_j\right\} _{j=1}^{2N}$. Find a matrix-valued  function $M(k;x,t|\sigma_d)$ with following condition:
   \begin{itemize}
       \item[(a)]  Analyticity:  $M(k;x,t| \sigma_d)$ is analytical in $\mathbb{C} \setminus  \left(  \Sigma^{(2)} \bigcup \mathscr{K} \bigcup \overline{\mathscr{K}}   \right)$;
      \item[(b)]   Asymptotic behaviors:   $ M(k;x,t | \sigma_d)= I +  \mathcal{O} (k^{-1})$, \quad $k \to  \infty$;
      \item[(c)]   Residue conditions: $M(k;x,t| \sigma_d)$ has simple poles at $k_j$ and $k_j^{*}$ with
                    \begin{align}
            & \res_{k=k_j} M(k;x,t| \sigma_d)  =
             \lim_{k\to k_j} M(k;x,t| \sigma_d)  N_{j}, \quad  N_{j}= \begin{pmatrix}  0  &  0  \\ \gamma_{j} & 0      \end{pmatrix}, \quad   \gamma_{j} = c_{j}  e^{2it \theta(k_j)}, \\
             & \res_{k=k_j^{*}} M(k;x,t| \sigma_d)  =
             \lim_{k\to k_j^{*}} M(k;x,t| \sigma_d) \widetilde{N}_{j},  \quad    \widetilde{N}_{j} =\begin{pmatrix}  0  &  \widetilde{\gamma}_{j}  \\  0  & 0      \end{pmatrix}, \quad     \widetilde{\gamma}_{j}= - c_{j}^{\dag} e^{-2it \theta(k)}.
         \end{align}
  \end{itemize}

\begin{proposition}
     Given scattering data $\sigma_{d}={(k_j,c_j)}_{k=1}^{2N}$ and $\mathscr{K}=\left\{k_j\right\} _{j=1}^{2N}$.  The RH problem have unique solutions
     \begin{equation}
          q_{sol}(x,t| \sigma_{d}) =\left(  u(x,t), u^{*}(x,t)  \right)^{T} =  2 i \lim_{k \to \infty} (k M(k| \sigma_{d}))_{12}.
     \end{equation}
\end{proposition}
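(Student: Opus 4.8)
The plan is to exploit that RHP6 carries no jump contour, so its solution is forced to be a rational (meromorphic) matrix with prescribed simple poles; existence and uniqueness then reduce to finite-dimensional linear algebra together with a Liouville argument, and the reconstruction formula follows exactly as in the continuous case.

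First I would reduce the problem to an algebraic system. Since $M(k;x,t|\sigma_d)$ is analytic off $\mathscr{K}\cup\overline{\mathscr{K}}$, tends to $I$ at infinity, and has only simple poles, it admits the partial-fraction representation
\[
M(k)=I+\sum_{j=1}^{2N}\frac{\res_{k=k_j}M}{k-k_j}+\sum_{j=1}^{2N}\frac{\res_{k=k_j^{*}}M}{k-k_j^{*}} .
\]
Writing $M=(M_L,M_R)$ in the $2+1$ block-column splitting and using the residue conditions (c), the nilpotency $N_j^2=\widetilde N_j^2=0$ shows that $M_L$ is analytic at every $k_j^{*}$ while $M_R$ is analytic at every $k_j$, and that
\[
\res_{k=k_j}M_L=M_R(k_j)\,\gamma_j,\qquad \res_{k=k_j^{*}}M_R=M_L(k_j^{*})\,\widetilde\gamma_j .
\]
Evaluating the partial-fraction formula at the poles closes this into a square linear system for the unknown vectors $\{M_R(k_j)\}$ and $\{M_L(k_j^{*})\}$, whose coefficients depend only on the data $\{k_j,\gamma_j,\widetilde\gamma_j\}$. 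Solving RHP6 is therefore equivalent to solving this system.

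For existence I would invoke the vanishing lemma. The Hermitian symmetry inherited from the scattering relations $\Psi^{\dag}(k^{*})=\Psi^{-1}(k)$ and $s^{\dag}(k^{*})=s^{-1}(k)$ descends to the relation $\widetilde\gamma_j=-\gamma_j^{\dag}$ between the norming data at the conjugate poles $k_j$ and $k_j^{*}$, and this is exactly what makes the homogeneous system (with $I$ replaced by $0$ at infinity) admit only the trivial solution: pairing a would-be null solution against its conjugate and integrating $M(k)M^{\dag}(\bar k)$ over a large circle, the residue contributions organize into a manifestly nonnegative quadratic form, forcing the solution to vanish. As the system is finite-dimensional, injectivity of the homogeneous map yields invertibility of the coefficient matrix and hence a unique solution of the inhomogeneous system. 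I expect this step --- verifying that the $2+1$ block symmetry genuinely produces a sign-definite form --- to be the main obstacle, since the $3\times3$ structure is less transparent than the scalar $2\times2$ case.

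For uniqueness I would argue by Liouville. One first checks $\det M\equiv1$: the residue conditions render the a priori poles of $\det M$ removable, and $\det M\to1$ at infinity, so $M(k)$ is invertible for every $k$. If $M$ and $\widetilde M$ both solve RHP6, then near each pole both factor as $M=\widehat M\,(I+N_j/(k-k_j))$ with the \emph{same} constant nilpotent $N_j$ (and analogously $\widetilde N_j$ at $k_j^{*}$); using $N_j^2=0$, the product $M\widetilde M^{-1}$ has removable singularities at all points of $\mathscr{K}\cup\overline{\mathscr{K}}$, is therefore entire, and tends to $I$, whence $M\widetilde M^{-1}\equiv I$. Finally the reconstruction formula follows as in (\ref{reconstruct}): expanding $M=I+M_1/k+\mathcal{O}(k^{-2})$ and substituting into the $x$-part of the Lax pair identifies $2i$ times the $(1,2)$ block of $M_1$ with $q_{sol}(x,t|\sigma_d)=(u,u^{*})^{T}$, which recovers $u(x,t)$ in its first component.
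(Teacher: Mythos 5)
Your proposal is correct, and its skeleton is the same as the paper's: both reduce RHP6, via the absence of a jump contour and the Plemelj/partial-fraction representation $M=I+\sum_j\frac{\res_{k_j}M}{k-k_j}+\sum_j\frac{\res_{k_j^{*}}M}{k-k_j^{*}}$, to a finite square linear system for the residue data, using the nilpotent block structure of $N_j$ and $\widetilde N_j$ to see that only $M_L$ has poles at $\mathscr{K}$ and only $M_R$ at $\overline{\mathscr{K}}$. Where you genuinely go beyond the paper is in justifying \emph{solvability} of that system: the paper simply writes ``by solving the linear system (\ref{existence}) we can obtain the parameters $\alpha_{j,\cdot},\beta_{j,\cdot}$'' and stops, whereas you invoke the symmetry $\widetilde\gamma_j=-\gamma_j^{\dag}$ to run a vanishing-lemma/positivity argument showing the homogeneous system is trivial, hence the coefficient matrix is invertible. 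This is the standard mechanism (as in Borghese--Jenkins--McLaughlin for focusing NLS), and you are right to flag the sign-definiteness check in the $2+1$ block setting as the one point requiring real work; it is also the one point the paper does not address at all. Your Liouville argument for uniqueness (via $\det M\equiv 1$ and the local factorization $M=\widehat M\,(I+N_j/(k-k_j))$ making the singularities of $M\widetilde M^{-1}$ removable) is a fleshed-out version of the paper's one-line appeal to Liouville's theorem, and your reconstruction step matches (\ref{reconstruct}). In short: same route, but your write-up closes an existence gap the paper leaves open.
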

\begin{proof}
     The uniqueness of the solution can be guaranteed by Liouville's theorem.
     As for the RHP
     \begin{equation}
        M_{+}(k;x,t| \sigma_d)= M_{-}(k;x,t| \sigma_d) V(k),
     \end{equation}
     By Plemelj  formula, we have
     \begin{equation}\label{plemelj}
          M(k| \sigma_d)=I + \sum_{j=1}^{2N} \frac{ \res_{k=k_j} M(k| \sigma_d)}{k-k_j} + \sum_{j=1}^{2N} \frac{\res_{k=k_j^{*}} M(k| \sigma_d)}{k-k_j^{*}}.
     \end{equation}
     Since $N_j(k;x,t)$ and $\widetilde{N_{j}}(k;x,t)$ are nilpotent matrices, we can rewrite the residue condition as the following form:
          \begin{equation}
               \res_{k=k_j} M(k| \sigma_{d}) = a(k_j) N_j = \begin{pmatrix}   a_{11}(k_j) & a_{12}(k_j) & a_{13}(k_j) \\     a_{21}(k_j) & a_{22}(k_j) & a_{23}(k_j) \\  a_{31}(k_j) & a_{32}(k_j) & a_{33}(k_j) \\       \end{pmatrix}
               \begin{pmatrix}   0 &0    &  0 \\  0 & 0  &0  \\ \gamma_{j,1}  & \gamma_{j,2} & 0   \end{pmatrix}  \triangleq
               \begin{pmatrix}  \alpha_{j,11}   & \alpha_{j,12}  & 0  \\
               \alpha_{j,21}   & \alpha_{j,22}  & 0  \\
               \alpha_{j,31}   & \alpha_{j,32}  & 0  \\
               \end{pmatrix}.\nonumber
          \end{equation}
Similarly, we get
           \begin{equation}
               \res_{k=k_j^{*}} M(k| \sigma_{d}) = b(k_j) \widetilde{N}_j = \begin{pmatrix}   b_{11}(k_j) & b_{12}(k_j) & b_{13}(k_j) \\     b_{21}(k_j) & b_{22}(k_j) & b_{23}(k_j) \\  b_{31}(k_j) & b_{32}(k_j) & b_{33}(k_j) \\       \end{pmatrix}
               \begin{pmatrix}   0 &0    &  \widetilde{\gamma}_{j,1}  \\  0 & 0  & \widetilde{\gamma}_{j,2}   \\ 0 &  0 & 0   \end{pmatrix}  \triangleq
               \begin{pmatrix}  0   & 0  & \beta_{j,1}  \\
               0   & 0  & \beta_{j,2}  \\
              0   & 0  & \beta_{j,3}  \\
               \end{pmatrix}.\nonumber
          \end{equation}
Substitute the above residue condition into Eq. (\ref{plemelj}), the RHP has the following formal solution
\begin{equation}
          M(k| \sigma_d)=I + \sum_{j=1}^{2N} \frac{ 1}{k-k_j} \begin{pmatrix}  \alpha_{j,11}   & \alpha_{j,12}  & 0  \\
               \alpha_{j,21}   & \alpha_{j,22}  & 0  \\
               \alpha_{j,31}   & \alpha_{j,32}  & 0  \\
               \end{pmatrix}    + \sum_{j=1}^{2N} \frac{1 }{k-k_j^{*}} \begin{pmatrix}  0   & 0  & \beta_{j,1}  \\
               0   & 0  & \beta_{j,2}  \\
              0   & 0  & \beta_{j,3}  \\
               \end{pmatrix}.
\end{equation}
As for the above formal solution,   calculate the residue  condition at $k_j$ and $k_j^{*}$, we have
\begin{equation}\label{existence}
   \begin{aligned}
      &  \begin{pmatrix}  \alpha_{j,11}   & \alpha_{j,12}  & 0  \\
               \alpha_{j,21}   & \alpha_{j,22}  & 0  \\
               \alpha_{j,31}   & \alpha_{j,32}  & 0  \\
               \end{pmatrix}  =    \begin{pmatrix}   0 &0    &  0 \\  0 & 0  &0  \\ \gamma_{j,1}  & \gamma_{j,2} & 0   \end{pmatrix} +  \sum_{t=1}^{2N} \frac{1 }{k_j-k_t^{*}} \begin{pmatrix}
                \beta_{t,1} \gamma_{j,1}    &   \beta_{t,1} \gamma_{j,2}  & 0 \\
              \beta_{t,2} \gamma_{j,1}    &   \beta_{t,2} \gamma_{j,2}  & 0 \\
              \beta_{t,3} \gamma_{j,1}    &   \beta_{t,3} \gamma_{j,2}  & 0 \\
               \end{pmatrix},   \quad j=1,2,\dots, 2N, \\
             &  \begin{pmatrix}  0   & 0  & \beta_{j,1}  \\
               0   & 0  & \beta_{j,2}  \\
              0   & 0  & \beta_{j,3}  \\
               \end{pmatrix}= \begin{pmatrix}   0 &0    &  \widetilde{\gamma}_{j,1}  \\  0 & 0  & \widetilde{\gamma}_{j,2}   \\ 0 &  0 & 0   \end{pmatrix}   +  \sum_{t=1}^{2N} \frac{1 }{k_j^{*}-k_t}
               \begin{pmatrix}  0&  0 &  \alpha_{t,11} \widetilde{\gamma}_{j,1} +  \alpha_{t,12}  \widetilde{\gamma}_{j,2} \\
                0&  0 &  \alpha_{t,21} \widetilde{\gamma}_{j,1} +  \alpha_{t,22}  \widetilde{\gamma}_{j,2} \\
                0&  0 &  \alpha_{t,31} \widetilde{\gamma}_{j,1} +  \alpha_{t,32}  \widetilde{\gamma}_{j,2} \\
               \end{pmatrix},   \quad j=1,2,\dots, 2N,
 \end{aligned}
\end{equation}
by solving the  linear system (\ref{existence}), we can obtain the important parameters $\alpha_{j,11}$, $\alpha_{j,12}$, $\alpha_{j,21}$,   $\alpha_{j,22}$,  $\alpha_{j,31}$,  $\alpha_{j,32}$,  $\beta_{j,1}$,  $\beta_{j,2}$ and $\beta_{j,3}$, thus, we complete the proof of the uniqueness of RHP5.
\end{proof}

Furthermore,  we make the following transformation
\begin{equation}
      M^{(out)}(k | \sigma_{d}^{(out)}) = M(k | \sigma_{d}) \begin{pmatrix}  \delta^{-1}(k) & 0  \\ 0  & \det \delta(k)    \end{pmatrix}.
\end{equation}
Then $M^{(out)}(k | \sigma_{d}^{(out)})$ satisfies the  following RH problem  \\

 \noindent\textbf{RHP7}.   Find a matrix-valued  function $M^{(out)}(k|\sigma_{d}^{out})$ with following condition:
   \begin{itemize}
       \item[(a)] $M^{(out)}(k|\sigma_{d}^{out})$ is continuous in $\mathbb{C}\setminus  \left( \Sigma^{(2)}\cup \mathscr{K} \cup \overline{\mathscr{K}} \right)$.
      \item[(b)] $M^{(out)}(k|\sigma_{d}^{out}) \to  I$, \quad $ k \to \infty$;
      \item[(c)]  $M^{(out)}(k|\sigma_{d}^{out})$ has simple poles at $k_j$  and $k_j^{*}$ with
         \begin{align}
             & \res_{k=k_j} M^{(out)}  =
            \lim_{k\to k_j}M^{(out)}(k|\sigma_{d}^{out})  \begin{pmatrix}  0  &  0  \\   c_{j} T_{1}^{-1}  T_{2}^{-1} e^{2it \theta}   & 0      \end{pmatrix}, &  j \in \Delta^{+},      \\
             & \res_{k=k_j^{*}} M^{(out)}  =
             \lim_{k\to k_j^{*}}M^{(out)}(k|\sigma_{d}^{out})  \begin{pmatrix}  0  &   -T_{1} T_{2}  c_{j}^{\dag} e^{-2it \theta(k)}  \\  0  & 0      \end{pmatrix},  &  j \in \Delta^{+}.
         \end{align}
  \end{itemize}

\begin{proposition}
     RHP7 has uniqueness solution, which also satisfy
     \begin{equation}
        q_{sol}(x,t| \sigma_{d}^{(out)}) =   q_{sol}(x,t| \sigma_{d} ).
     \end{equation}
\end{proposition}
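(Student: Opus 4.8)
The plan is to read off both assertions from the explicit change of variables
$$
M^{(out)}(k\,|\,\sigma_d^{(out)}) = M(k\,|\,\sigma_d)\begin{pmatrix} \delta^{-1}(k) & 0 \\ 0 & \det\delta(k) \end{pmatrix},
$$
together with the existence and uniqueness of the reflectionless problem RHP6 already secured in the previous proposition. First I would settle uniqueness of RHP7 by Liouville's theorem. Since $\det\begin{pmatrix} \delta^{-1} & 0 \\ 0 & \det\delta \end{pmatrix} = \det(\delta^{-1})\cdot\det\delta = 1$ and $\det M(k\,|\,\sigma_d)\equiv 1$, every solution of RHP7 has unit determinant and is therefore invertible away from its poles. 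Given two solutions $M_1,M_2$, I would form $\Xi=M_1M_2^{-1}$; because both share the identical residue relations at $k_j\in\mathscr{K}$ and $k_j^*\in\overline{\mathscr{K}}$ with the nilpotent triangular coefficients of RHP7(c), the apparent singularities of $\Xi$ at these points are removable. Hence $\Xi$ is entire, bounded, and tends to $I$ at infinity, so $\Xi\equiv I$, which gives uniqueness.

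Next I would verify existence by checking that the transformed matrix actually solves RHP7. The crucial input is that, by the standing assumption that all poles lie in $\Delta^{+}$ (and off the segment $[-k_0,k_0]$, where the jump of $\delta$ sits), both $\delta(k)$ and $\det\delta(k)$ are analytic and invertible in fixed neighborhoods of each $k_j$ and $k_j^*$. Conjugating the RHP6 residue conditions by the diagonal factor $\begin{pmatrix} \delta^{-1} & 0 \\ 0 & \det\delta \end{pmatrix}$ then turns the bare nilpotent coefficients $N_j=\begin{pmatrix} 0 & 0 \\ \gamma_j & 0 \end{pmatrix}$, $\widetilde N_j=\begin{pmatrix} 0 & \widetilde\gamma_j \\ 0 & 0 \end{pmatrix}$ into the $T$-dressed ones, using $T_1=\delta$ and $T_2=\det\delta$; this reproduces exactly the residues $c_jT_1^{-1}T_2^{-1}e^{2it\theta}$ and $-T_1T_2c_j^{\dag}e^{-2it\theta}$ demanded in RHP7(c). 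Condition (b) is preserved because the diagonal dressing tends to $I$ as $k\to\infty$ (from $\delta=I+\mathcal{O}(1/k)$ and $\det\delta=1+\mathcal{O}(1/k)$), and analyticity in $\mathbb{C}\setminus(\Sigma^{(2)}\cup\mathscr{K}\cup\overline{\mathscr{K}})$ is immediate. Thus the transformed matrix is the unique solution.

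Finally I would establish the equality of the reconstructed potentials via the block structure. Writing $M$ in its $2+1$ block form, right multiplication by $\begin{pmatrix} \delta^{-1} & 0 \\ 0 & \det\delta \end{pmatrix}$ scales the third column, so the upper-right $(1,2)$-block obeys $\bigl(M^{(out)}\bigr)_{12}=\bigl(M(k\,|\,\sigma_d)\bigr)_{12}\det\delta(k)$. Since $\det\delta(k)=1+\mathcal{O}(1/k)$, the correction contributes only at order $\mathcal{O}(1/k)$ and drops out in the limit, giving
$$
\lim_{k\to\infty}\bigl(kM^{(out)}(k\,|\,\sigma_d^{(out)})\bigr)_{12}=\lim_{k\to\infty}\bigl(kM(k\,|\,\sigma_d)\bigr)_{12},
$$
and therefore $q_{sol}(x,t\,|\,\sigma_d^{(out)})=q_{sol}(x,t\,|\,\sigma_d)$ through the reconstruction formula $q_{sol}=2i\lim_{k\to\infty}(kM)_{12}$. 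The hard part is the careful bookkeeping of the residue conjugation under the block-diagonal dressing (ensuring $\delta$ is genuinely analytic and invertible at every pole, which is where the $\Delta^{+}$ assumption is used) together with verifying the pole cancellation in the Liouville step; once these are in hand, the asymptotic normalizations $\delta\to I$, $\det\delta\to1$ make the remaining estimates routine.
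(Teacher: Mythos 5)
Your argument is correct and follows essentially the same route as the paper: the identity $q_{sol}(x,t|\sigma_d^{(out)})=q_{sol}(x,t|\sigma_d)$ is obtained, exactly as in the text, from the explicit diagonal dressing together with $\det\delta(k)=1+\mathcal{O}(1/k)$ in the reconstruction formula. The only difference is cosmetic: you establish uniqueness by a direct Liouville argument on RHP7, while the paper simply inherits it from the uniqueness of $M(k|\sigma_d)$ through the invertible transformation; both are standard and equivalent here.
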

\begin{proof}
      Since  matrix-valued  function $M^{(out)}(k| \sigma_{d}^{out})$ can be obtained from $ M(k| \sigma_d)$ by an explicit transformation
      \begin{equation}
            M^{(out)}(k| \sigma_{d}^{out}) =    M (k | \sigma_{d} )    \begin{pmatrix}  \delta^{-1}(k) & 0  \\ 0  & \det \delta(k)    \end{pmatrix},
      \end{equation}
      the uniqueness of $M^{(out)}(k| \sigma_{d}^{out})$ are follows from that of $M(k| \sigma_d)$.
      Moreover,  according to the asymptotic behavior of $\Delta(k)$, we have
      \begin{equation}
          \begin{aligned}
         q_{sol}(x,t| \sigma_{d}^{(out)}) &=   2 i \lim_{k \to \infty} (k M^{(out)}(k| \sigma_{d}^{(out)}))_{12} =  2 i \lim_{k \to \infty} (k M(k| \sigma_{d})        \Delta(k))_{12}  \\
         & =  2 i \lim_{k \to \infty} (k M(k| \sigma_{d}) )_{12}= q_{sol} (x,t|\sigma_{d} ).
          \end{aligned}
      \end{equation}
\end{proof}

We now  consider  the long-time behavior of soliton solutions. Firstly, we define a space-time cone
\begin{equation}
     \mathcal{C}\left(  v_1,v_2   \right)  = \left\{  (x,t) \in \mathbb{R}^2 | x=   v t,    v \in [v_1,v_2]  \right\},
\end{equation}
where $v_2 \leq  v_1 < 0$.  Denote
\begin{equation}
    \begin{aligned}
 & \mathcal{I} =[-\frac{v_1}{4},-\frac{v_2}{4}], \quad    \mathscr{K}(\mathcal{I}) = \left\{ k_j \in  \mathscr{K} |    -\frac{v_1}{4}  \leq     3 \mathrm{Re}^2 k_j - \mathrm{Im}^2 k_j      \leq     -\frac{v_2}{4}              \right\},   \\
    &  N(\mathcal{I})= |\mathscr{K}(\mathcal{I})|,   \quad  \mathscr{K}^{-}(\mathcal{I})   = \left\{ k_j \in  \mathscr{K} |    3 \mathrm{Re}^2 k_j - \mathrm{Im}^2 k_j      <    -\frac{v_1}{4}              \right\},  \\
    & \mathscr{K}^{+}(\mathcal{I})   = \left\{ k_j \in  \mathscr{K} |    3 \mathrm{Re}^2 k_j - \mathrm{Im}^2 k_j     >    -\frac{v_2}{4}              \right\},         \\
    & c_{j}(\mathcal{I}) = c_{j}\delta^{-1}(k_j)   \exp\left[ - \frac{1}{2 \pi i }   \int_{-k_0}^{k_0}   \frac{\log(1+|\gamma(\zeta)|^2)}{\zeta -k_j } d \zeta \right],
    \end{aligned}
\end{equation}

\begin{figure}[htbp]
        \begin{center}
\begin{tikzpicture}
\node[anchor=south west,inner sep=0] (image) at (0,0)
 {\includegraphics[width=9cm,height=7cm]{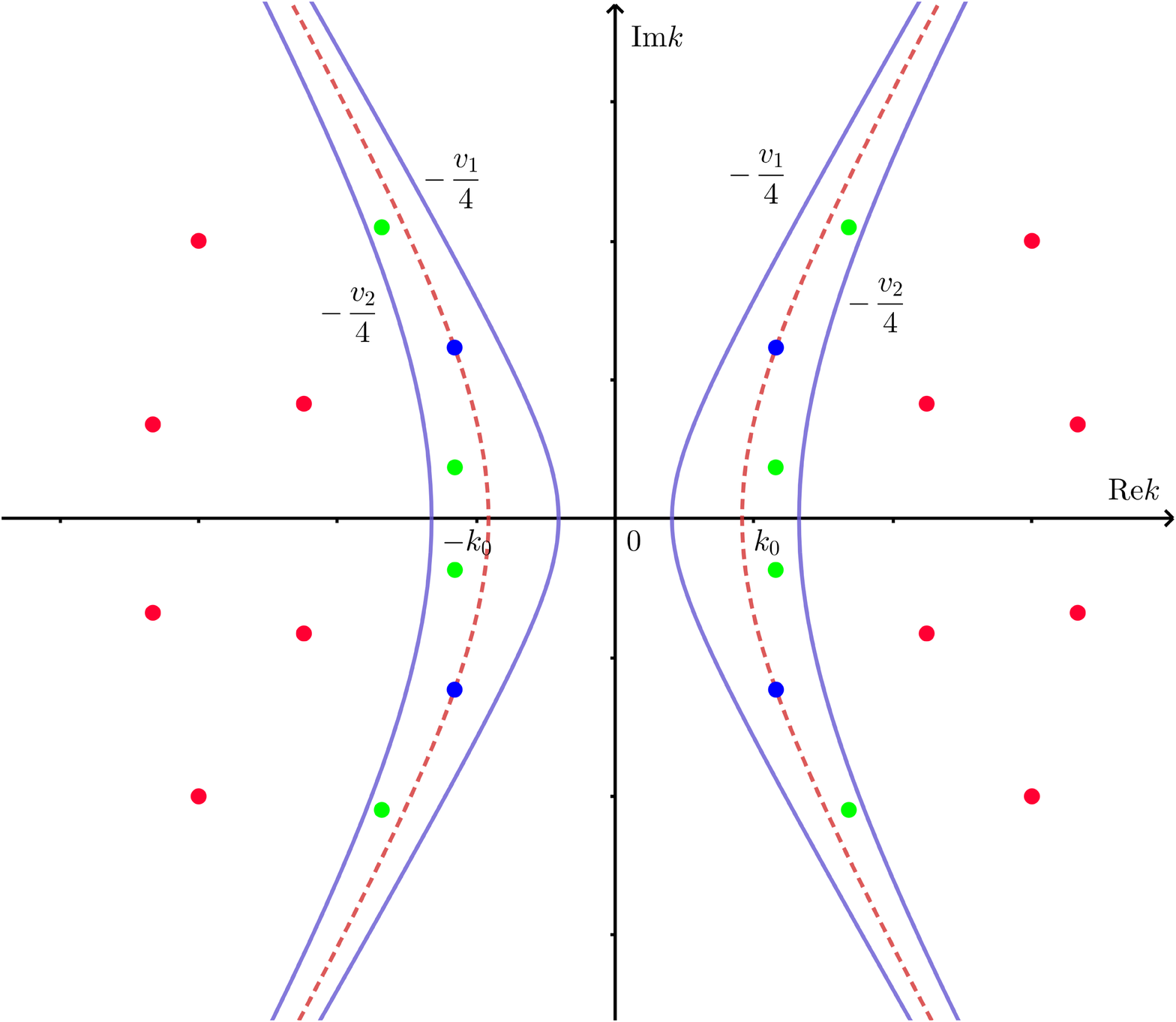}};
    \begin{scope}[x={(image.south east)},y={(image.north west)}]
    \end{scope}
\end{tikzpicture}
      \end{center}
       \caption{\footnotesize  The poles distribution diagram.  The points $(\textcolor{red}{\bullet} \textcolor{green}{\bullet} \textcolor{blue}{\bullet})$ represent the breather solitons. Moreover,  the points  $(\textcolor{red}{\bullet})$  indicate the poles outside the band $\mathcal{I}$ and the points  $(\textcolor{green}{\bullet}\textcolor{blue}{\bullet})$  indicate the poles within the band $\mathcal{I}$. Besides, points  $( \textcolor{blue}{\bullet})$ lie on the critical line $\mathrm{Re} i \theta(k)=0$. }
       \end{figure}

then we have the following  theorem

\begin{proposition}
   Given  scattering data $\sigma_{d}= \left\{    (k_j,  c_j)     \right\}_{j=1}^{2N}$ and $\sigma_{d}(\mathcal{I})= \left\{    (k_j,  c_j(\mathcal{I})) |  k_j \in \mathscr{K}(\mathcal{I})    \right\}$. At $t \to   +\infty$  with $(x,t)  \in  \mathcal{C}\left( v_1,v_2   \right)$, we have

   \begin{equation}
       M (k;x,t|\sigma_{d}) =  \left(     I + \mathcal{O} \left(     e^{-8\mu t}    \right)    \right)               M^{\Delta_{\mathcal{I}}}(k;x,t|\sigma_{d}(\mathcal{I})),
   \end{equation}
  where $\mu (\mathcal{I}) = \min_{k_j \in \mathscr{K}/ \mathscr{K}(\mathcal{I})}  \left\{    \mathrm{Im}k_j \cdot \mathrm{dist} \left(    3 \mathrm{Re}^2 k_j - \mathrm{Im}^2 k_j,  \mathcal{I} \right)     \right\}$.
\end{proposition}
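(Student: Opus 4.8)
The plan is to show that only the eigenvalues whose ``velocity'' $\Lambda_j:=3\,\mathrm{Re}^2 k_j-\mathrm{Im}^2 k_j$ falls inside the band $\mathcal{I}$ survive in the cone $\mathcal{C}(v_1,v_2)$, while every pole with $\Lambda_j\notin\mathcal{I}$ perturbs $M(k;x,t|\sigma_d)$ only by a term that is exponentially small in $t$.

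First I would pin down the sign of the phase at the discrete spectrum. Writing $x=vt$ so that $-v/4\in\mathcal{I}$ for every $(x,t)\in\mathcal{C}(v_1,v_2)$, and using $\theta(k)=4k^3+vk$ together with $\mathrm{Im}(k^3)=\mathrm{Im}k\,(3\mathrm{Re}^2k-\mathrm{Im}^2k)$, one gets
\[
\mathrm{Re}\big(2it\theta(k_j)\big)=-2t\,\mathrm{Im}k_j\,(4\Lambda_j+v).
\]
Since $-v/4\in\mathcal{I}$ and $\Lambda_j\notin\mathcal{I}$, the quantity $4\Lambda_j+v$ keeps a fixed sign with $|4\Lambda_j+v|\ge 4\,\mathrm{dist}(\Lambda_j,\mathcal{I})$. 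Hence for $k_j\in\mathscr{K}^{+}(\mathcal{I})$ we have $|e^{2it\theta(k_j)}|\le e^{-8\mu t}$, while for $k_j\in\mathscr{K}^{-}(\mathcal{I})$ we have $|e^{-2it\theta(k_j)}|\le e^{-8\mu t}$, with $\mu=\mu(\mathcal{I})$ as defined. This is exactly the rate of the statement, and it separates the out-of-cone poles into those whose lower-triangular residue coefficient $c_je^{2it\theta}$ is already exponentially small ($\mathscr{K}^{+}$) and those for which instead the \emph{conjugate} (upper-triangular) coefficient $c_j^{\dag}e^{-2it\theta}$ is small while the original one is exponentially large ($\mathscr{K}^{-}$).

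Next I would renormalize the troublesome poles in $\mathscr{K}^{-}(\mathcal{I})$. Following the column-trading device used for the focusing hierarchy in \cite{fNLS}, for each such $k_j$ I conjugate $M$ by a block-triangular factor (with a simple $1/(k-k_j)$ entry) that interchanges the roles of the residues at $k_j$ and $k_j^{*}$, so that after conjugation the surviving coefficient is the decaying $\sim e^{-2it\theta}$ rather than the growing $\sim e^{2it\theta}$; the conjugation is chosen to respect the symmetry reductions and the normalization $M\to I$ as $k\to\infty$. I then interpolate all remaining out-of-cone residue conditions: each $k_\ell\notin\mathscr{K}(\mathcal{I})$ and its conjugate are replaced by a jump of $M$ across a small circle $\partial D_\ell$ of radius less than $\rho/3$. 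By the previous step every one of these jumps obeys $\|V-I\|_{L^\infty(\partial D_\ell)}=\mathcal{O}(e^{-8\mu t})$. The cumulative effect of the triangular conjugations and of the Blaschke-type factors $(k-k_\ell)/(k-k_\ell^{*})$ produced in removing the out-of-cone poles, together with the $\delta$-dressing already present in the residues, is precisely the passage $c_j\mapsto c_j(\mathcal{I})$ recorded in $\sigma_d(\mathcal{I})$.

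Then the reduced problem carries only the poles of $\mathscr{K}(\mathcal{I})$ with data $\sigma_d(\mathcal{I})$, i.e.\ it is solved by $M^{\Delta_{\mathcal{I}}}(k;x,t|\sigma_d(\mathcal{I}))$, together with a family of circle jumps that are uniformly $I+\mathcal{O}(e^{-8\mu t})$. Setting $M(k;x,t|\sigma_d)=E(k)M^{\Delta_{\mathcal{I}}}(k;x,t|\sigma_d(\mathcal{I}))$, the error $E$ solves a small-norm RH problem supported on $\bigcup_\ell\partial D_\ell$, so a Beals--Coifman/Neumann-series estimate gives $E=I+\mathcal{O}(e^{-8\mu t})$ uniformly for $k$ bounded away from the circles and for $(x,t)\in\mathcal{C}(v_1,v_2)$; existence and uniqueness of $M^{\Delta_{\mathcal{I}}}$ are furnished by the reflectionless solvability result established above (Liouville's theorem plus the linear residue system). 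This produces the claimed factorization. The hard part will be the renormalization of the growing poles $\mathscr{K}^{-}(\mathcal{I})$ in the present $2+1$ block setting: one must build the triangular conjugation so that it trades the growing exponential for the decaying one while preserving the two symmetry relations and the nilpotent residue structure, and then verify that the accumulated Blaschke and $\delta$ factors reproduce exactly the norming constants $c_j(\mathcal{I})$. By contrast, the sign analysis of $\mathrm{Re}(2it\theta)$ on the whole cone and the final small-norm estimate are routine once the triangular swap is in place.
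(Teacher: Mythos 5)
Your proposal follows the same skeleton as the paper's proof: (i) the sign analysis of $\mathrm{Re}\bigl(2it\theta(k_j)\bigr)=-2t\,\mathrm{Im}k_j\,(4\Lambda_j+v)$ on the cone, yielding the rate $e^{-8\mu t}$ for every pole with $\Lambda_j\notin\mathcal{I}$; (ii) trading each out-of-band pole and its conjugate for a jump on a small disk $\partial D_j$ via an explicit rational conjugation $\Gamma(k)=I-\tfrac{1}{k-k_j}N_j$; (iii) a small-norm/Beals--Coifman argument for the error $\mathcal{E}=\widetilde M\,[M^{\Delta_{\mathcal{I}}}]^{-1}$. Where you diverge is the step you yourself flag as ``the hard part'': the triangular (column-trading) conjugation needed to renormalize poles in $\mathscr{K}^{-}(\mathcal{I})$, where $c_je^{2it\theta(k_j)}$ grows. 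The paper never performs this swap --- it is excluded at the outset by the standing assumption in Section 3.1.1 that there are no poles in $\Delta^{-}$, so every residue coefficient outside the band already decays and the simple pole-to-jump conversion suffices. Two consequences for your write-up. First, the crux you leave unexecuted is vacuous under the paper's hypotheses, so your argument, restricted to those hypotheses, is complete and coincides with the paper's. Second, if you do carry out the swap in the general case, your claim that the ``accumulated Blaschke and $\delta$ factors reproduce exactly the norming constants $c_j(\mathcal{I})$'' is not consistent with the paper's definition: $c_j(\mathcal{I})=c_j\,\delta^{-1}(k_j)\exp\bigl[-\tfrac{1}{2\pi i}\int_{-k_0}^{k_0}\tfrac{\log(1+|\gamma(\zeta)|^2)}{\zeta-k_j}\,d\zeta\bigr]$ contains only the conjugation factors $\delta^{-1}$ and $(\det\delta)^{-1}$ and no Blaschke products, precisely because no triangularity flip occurs. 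So either state the no-$\Delta^{-}$ assumption explicitly and drop the swap, or, if you insist on generality, you must also modify the target data $\sigma_d(\mathcal{I})$ to absorb the Blaschke factors evaluated at the surviving poles --- as written, the factorization you would prove is against a different $M^{\Delta_{\mathcal{I}}}$ than the one in the statement.
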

\begin{proof}
     Denote
           \begin{equation}
             \begin{aligned}
              & \Delta_{\mathcal{I}}^{-}= \left\{ j \in \Delta_{\mathcal{I}} |        3 \mathrm{Re}^2 k_j - \mathrm{Im}^2 k_j   <     -\frac{v_1}{4}              \right\},  &    \Delta_{\mathcal{I}}^{+}= \left\{ j \in \Delta_{\mathcal{I}}  |        3 \mathrm{Re}^2 k_j - \mathrm{Im}^2 k_j   >     -\frac{v_2}{4}              \right\},\\
               \end{aligned}
           \end{equation}
     As for $t>0$, $\left(  x,t \right)  \in  \mathcal{C}\left(   x_1,x_2,y_1,y_2   \right) $, we obtain
     \begin{equation}
            v_1 \leq  v = \frac{x  }{t} \leq  v_2,
     \end{equation}
      thus
      \begin{equation}
             -\frac{v_1}{12} \leq  (\pm k_0)^2 \leq  -\frac{v_2}{12}, \quad i.e. \quad  -\frac{v_1}{4}  \leq     3 \mathrm{Re}^2 (\pm k_0) - \mathrm{Im}^2  (\pm k_0)      \leq     -\frac{v_2}{4},
     \end{equation}
     which means that $\pm k_0$ lie in the strip domain $\left\{ k|    -\frac{v_1}{4}  \leq     3 \mathrm{Re}^2 k - \mathrm{Im}^2  k     \leq     -\frac{v_2}{4}   \right\} $.

      Next, for $k_j \in \mathscr{K} \setminus \mathscr{K}(\mathcal{I})$, by analyzing the residue coefficients, we have the following estimate for $N_{j}^{\Delta_{\mathcal{I}}^{\pm}}$ and $\widetilde{N}_{j}^{\Delta_{\mathcal{I}}^{\pm}}$
      \begin{equation}
          N_{j}^{\Delta_{\mathcal{I}}^{\pm}}= \left\{  \begin{aligned}
           & \mathcal{O}(1),   \quad k_j \in \mathscr{K}(\mathcal{I}),  \\
           & \mathcal{O}(e^{-8 \mu t}),   \quad k_j \in  \mathscr{K}   \setminus \mathscr{K}(\mathcal{I}),
          \end{aligned}
            \right.    \qquad  \widetilde{N}_{j}^{\Delta_{\mathcal{I}}^{\pm}}= \left\{  \begin{aligned}
           & \mathcal{O}(1),   \quad k_j^{*} \in \overline{\mathscr{K}}(\mathcal{I}),  \\
           & \mathcal{O}(e^{-8 \mu t}),   \quad k_j^{*} \in  \overline{\mathscr{K}}   \setminus \overline{\mathscr{K}}(\mathcal{I}).
          \end{aligned}
            \right.
      \end{equation}
      For every $k_j  \in  \mathscr{K} \setminus \mathscr{K} \mathcal{I} $, we suppose  that  $D_{j}$  is a  small disk centered on $k_j$, which has  a sufficiently small radius so that the disks don't intersect each other.
      Define matrix-valued function

       \begin{equation}
           \Gamma(k) = \left\{  \begin{aligned}
                       &   I- \frac{1}{k-k_j} N_{j}^{\Delta_{\mathcal{I}}^{-}}, & \quad  z \in D_{j},\\
                       &   I- \frac{1}{k-k_j^{*}} \widetilde{N}_{j}^{\Delta_{\mathcal{I}}^{-}},  &\quad  z \in \overline{D}_{j},\\
                       & I, & \quad  otherwise.
       \end{aligned}
            \right.
      \end{equation}
      Then we introduce a new transformation to  convert the poles $k_j   \in \mathscr{K}   \setminus \mathscr{K}(\mathcal{I})$
      to  jumps which will  decay to identity matrix exponentially,
      \begin{equation}\label{poletojump}
         \widetilde{M} (k | \sigma_{d}) =    M  (k | \sigma_{d} )  \Gamma(k)
      \end{equation}
      Direct calculation shows that $\widetilde{M}^{\Delta^{\pm}} (k | \sigma_{d} )$ satisfies the following jump condition
      \begin{equation}
        \widetilde{M}_{+} (k | \sigma_{d} ) =  \widetilde{M}_{-} (k | \sigma_{d} )  \widetilde{V}(k)=   \widetilde{M}_{-} (k | \sigma_{d} )  \Gamma(k), \quad  k \in \partial D_j \bigcup  \partial \overline{D}_j,
      \end{equation}
      where jump matrices $\widetilde{V}(k)$  satisfies
      \begin{equation}
          || \widetilde{V}(k) -I   ||_{L^{\infty}(\widetilde{\Sigma})} = \mathcal{O}(e^{-8\mu t}).
      \end{equation}
      Note that $\widetilde{M}  (k | \sigma_{d})$   has  the same  poles and residue conditions  with    $ M^{\Delta_{\mathcal{I}}} (k | \sigma_{d}(\mathcal{I}))$, thus we can get
      \begin{equation}
           \mathcal{E} (k) =  \widetilde{M} (k | \sigma_{d})  \left[     M^{\Delta_{\mathcal{I}}} (k | \sigma_{d}(\mathcal{I}))  \right]^{-1}
      \end{equation}
      has no poles and satisfies jump condition
      \begin{equation}\label{smallnorm}
             \mathcal{E}_{+} (k)  =  \mathcal{E}_{-} (k) V_{ \mathcal{E}}(k),
      \end{equation}

      where $ V_{\mathcal{E}}(k) =    M^{\Delta_{\mathcal{I}}} (k | \sigma_{d})(\mathcal{I}))   \widetilde{V}(k)                 \left[     M^{\Delta_{\mathcal{I}}} (k | \sigma_{d}(\mathcal{I}))  \right]^{-1} \sim   \widetilde{V}(k)  $ satisfies

      \begin{equation}
          || V_{ \mathcal{E}}(k)  -I   ||_{L^{\infty}(\widetilde{\Sigma})} = \mathcal{O}(e^{-8\mu t}),  \quad t \to +\infty.
      \end{equation}
      Based on the properties of small norm RHP, we know that $\mathcal{E}(k)$ exists and
      \begin{equation}
          \mathcal{E}(k) =  I + \mathcal(O)\left(     e^{-8\mu t}    \right), \qquad  t \to  +\infty.
      \end{equation}
      Finally, according to Eqs. (\ref{poletojump}) and (\ref{smallnorm}), we obtain the following conclusion
      \begin{equation}
       M (k;x,t|\sigma_{d}) =  \left(     I + \mathcal{O}\left(     e^{-8\mu t}    \right)    \right)               M^{\Delta_{\mathcal{I}}}(k;x,t|\sigma_{d}(\mathcal{I})),
   \end{equation}
\end{proof}

\begin{corollary}
    Suppose that $q_{sol}$ is the soliton  solutions corresponding to the scattering data $\sigma_{d}= \left\{    (k_j, c_j)   \right\}$ of  Sasa-Satsuma equation, as $t \to +\infty$,
    \begin{equation}\label{qsolI}
          q_{sol}(x,t| \sigma_{d}^{(out)})  =  q_{sol}(x,t | \sigma_{d} (\mathcal{I}))  +\mathcal{O} \left(  e^{-8 \mu t}  \right),
    \end{equation}
    where  $q_{sol}(x,t | \sigma_{d} (\mathcal{I}))$ is the soliton solution corresponding to the scattering data $\sigma_{d} (\mathcal{I})$ of  Sasa-Satsuma equation.
\end{corollary}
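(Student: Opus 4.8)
The plan is to read the corollary off from the preceding proposition by applying the reconstruction formula (\ref{reconstruct}) and then invoking the already-established identity $q_{sol}(x,t|\sigma_{d}^{(out)})=q_{sol}(x,t|\sigma_{d})$ from RHP7. The only genuine content is to promote the uniform pointwise bound in $M(k;x,t|\sigma_{d})=(I+\mathcal{O}(e^{-8\mu t}))M^{\Delta_{\mathcal{I}}}(k;x,t|\sigma_{d}(\mathcal{I}))$ to a bound on the $k^{-1}$ Laurent coefficient at infinity, since that is precisely the quantity the reconstruction formula extracts.

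First I would note that for $k$ in a neighborhood of infinity the cutoff $\Gamma(k)$ in (\ref{poletojump}) equals $I$, because it is nontrivial only on the disks $D_j,\overline{D}_j$ clustered about the poles in $\mathscr{K}\setminus\mathscr{K}(\mathcal{I})$. Hence $\widetilde{M}(k|\sigma_{d})=M(k|\sigma_{d})$ there, so $M(k|\sigma_{d})=\mathcal{E}(k)\,M^{\Delta_{\mathcal{I}}}(k|\sigma_{d}(\mathcal{I}))$ for large $k$, where $\mathcal{E}(k)$ solves the small-norm problem (\ref{smallnorm}). Expanding both factors at infinity as $\mathcal{E}(k)=I+k^{-1}\mathcal{E}^{(1)}+\mathcal{O}(k^{-2})$ and $M^{\Delta_{\mathcal{I}}}=I+k^{-1}M^{\Delta,(1)}+\mathcal{O}(k^{-2})$ gives
\begin{equation}
\lim_{k\to\infty}\bigl(k\,M(k|\sigma_{d})\bigr)_{12}=\bigl(\mathcal{E}^{(1)}+M^{\Delta,(1)}\bigr)_{12},
\end{equation}
so that $q_{sol}(x,t|\sigma_{d})=2i\bigl(M^{\Delta,(1)}\bigr)_{12}+2i\bigl(\mathcal{E}^{(1)}\bigr)_{12}=q_{sol}(x,t|\sigma_{d}(\mathcal{I}))+2i\bigl(\mathcal{E}^{(1)}\bigr)_{12}$, the first term again by (\ref{reconstruct}) applied to $M^{\Delta_{\mathcal{I}}}$.

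It then remains to show $\mathcal{E}^{(1)}=\mathcal{O}(e^{-8\mu t})$. Here I would use the Beals--Coifman representation of the small-norm solution, which yields
\begin{equation}
\mathcal{E}^{(1)}=-\frac{1}{2\pi i}\int_{\widetilde{\Sigma}}\mathcal{E}_{-}(\zeta)\bigl(V_{\mathcal{E}}(\zeta)-I\bigr)\,d\zeta.
\end{equation}
Since $\widetilde{\Sigma}$ is the fixed, compact union $\partial D_j\cup\partial\overline{D}_j$, with $\|V_{\mathcal{E}}-I\|_{L^{\infty}(\widetilde{\Sigma})}=\mathcal{O}(e^{-8\mu t})$ and $\mathcal{E}_{-}=I+\mathcal{O}(e^{-8\mu t})$ bounded on it, the integral inherits the exponential bound, uniformly for $(x,t)\in\mathcal{C}(v_1,v_2)$. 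Combining the last two displays with the RHP7 identity $q_{sol}(x,t|\sigma_{d}^{(out)})=q_{sol}(x,t|\sigma_{d})$ produces (\ref{qsolI}).

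The delicate point, and the step I would be most careful about, is exactly this last transfer: the proposition only asserts pointwise exponential closeness of the two matrices, whereas the corollary concerns the $k^{-1}$ coefficient feeding the trace-type reconstruction. The localization of the jump $V_{\mathcal{E}}-I$ to the compact contour $\widetilde{\Sigma}$, together with its uniform exponential smallness there, is what guarantees that the subleading Laurent coefficient—not merely the matrix itself—is $\mathcal{O}(e^{-8\mu t})$; everything else is routine expansion and substitution.
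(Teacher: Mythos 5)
Your proposal is correct and follows exactly the route the paper intends: the corollary is stated without proof as an immediate consequence of the preceding proposition, and your derivation (combine the identity $q_{sol}(x,t|\sigma_{d}^{(out)})=q_{sol}(x,t|\sigma_{d})$ with the factorization $M=\mathcal{E}\,M^{\Delta_{\mathcal{I}}}$ away from the disks, extract the $k^{-1}$ coefficient, and bound $\mathcal{E}^{(1)}$ via the small-norm/Beals--Coifman representation on the compact contour $\widetilde{\Sigma}$) is precisely the missing detail. Your emphasis on promoting the pointwise bound to a bound on the Laurent coefficient is the right delicate point and is handled correctly.
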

\textbf{Local solvable RH problem}
\begin{proposition}
  \begin{equation}
  || V^{(2)}(k) -I ||_{L^{\infty}(\Sigma^{(2)})}= \left\{
                      \begin{aligned}
                      & \mathcal{O} \left(            e^{ - 6 t k_0 \rho^2 }   \right),  &  k \in  \Sigma_{j}^{\pm} \setminus \mathcal{U}_{\pm k_0}, \quad j=1,2,  \\
                      &  \mathcal{O} \left(            e^{ - 8  t k_0^2  \rho  }   \right),    &  k \in  \Sigma_{j}^{\pm} \setminus \mathcal{U}_{\pm k_0}, \quad j=3,4,   \\
                      &  \mathcal{O} \left(   t^{-1/2} k_0^{-1} |k \mp k_0|^{-1}          \right), &  k \in \Sigma^{(2)} \bigcap \mathcal{U}_{\pm k_0},  \\
                      &     0, &  k \in  [-i k_0  \tan(\frac{\pi}{12}), i k_0 \tan(\frac{\pi }{12})],\\
                      &  \mathcal{O} \left(            e^{  -14 t k_0^3 \tan^3(\frac{\pi}{12}) }   \right),    &  k \in  [ \pm i  k_0,  \pm i k_0 \tan(\frac{\pi}{12})].
                      \end{aligned}
 \right.
 \end{equation}
\end{proposition}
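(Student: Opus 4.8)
The plan is to exploit the triangular structure of $V^{(2)}(k)$ recorded in $\overline{\partial}$-RHP1. On each ray $\Sigma_j^{\pm}$ the matrix $V^{(2)}(k)-I$ has a single nonzero off-diagonal block of the form $\pm R_j^{\pm}(k)\,e^{\pm 2it\theta(k)}$, while on the central vertical segment $[-ik_0\tan(\tfrac{\pi}{12}),\,ik_0\tan(\tfrac{\pi}{12})]$ it is identically zero by (\ref{jump2}). Consequently $\|V^{(2)}(k)-I\|$ is controlled by $|R_j^{\pm}(k)|\,|e^{\pm 2it\theta(k)}|$, and since Proposition \ref{proR} gives $|R_j^{\pm}(k)|\lesssim 1+\langle\mathrm{Re}\,k\rangle^{-1/2}$, which is uniformly bounded on every contour piece in question, the whole problem reduces to a lower bound for $\mp\,\mathrm{Re}\,(i\theta(k))$ along $\Sigma^{(2)}$.

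First I would record the phase along the contours. Writing $i\theta(k)=4i\big((k\mp k_0)^3\pm 3k_0(k\mp k_0)^2\pm 2k_0^3\big)$ and parametrizing $k=\pm k_0+le^{i\phi}$ with $l\geq 0$, one finds $\mathrm{Re}\,(i\theta)=-4\big(l^3\sin 3\phi\pm 3k_0 l^2\sin 2\phi\big)$, the real constant $2k_0^3$ dropping out. The angles defining $\Sigma_1^{\pm},\Sigma_2^{\pm}$ and $\Sigma_3^{\pm},\Sigma_4^{\pm}$ are exactly the steepest-descent directions, so $|e^{\pm 2it\theta}|=e^{2t\,\mathrm{Re}(i\theta)}$ decays: on $\Sigma_1^{\pm},\Sigma_2^{\pm}$ the quadratic term contributes $-24tk_0 l^2$ and the cubic term $-4\sqrt2\,t\,l^3$, whereas on the finite segments $\Sigma_3^{\pm},\Sigma_4^{\pm}$ the dominant contribution is the linear bound $-16tk_0^2 l$. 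These are precisely the decay rates behind the global estimate established earlier.

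Next I would restrict to the region outside the neighborhoods, where $l=|k\mp k_0|>\rho/2$. Keeping only the quadratic term on $\Sigma_1^{\pm},\Sigma_2^{\pm}$ gives $|e^{\pm2it\theta}|\leq e^{-24tk_0 l^2}\leq e^{-6tk_0\rho^2}$, and keeping the linear term on $\Sigma_3^{\pm},\Sigma_4^{\pm}$ gives $e^{-16tk_0^2 l}\leq e^{-8tk_0^2\rho}$, which are the first two claimed bounds. On the two short vertical segments $[\pm ik_0,\pm ik_0\tan(\tfrac{\pi}{12})]$ I would set $k=is$ and bound $\mathrm{Re}(i\theta)$ below by $-14\,\mathrm{Im}^3 k$ at the endpoint, producing $e^{-14tk_0^3\tan^3(\pi/12)}$; on the remaining central segment the jump is $I$, giving $0$.

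The delicate point is the estimate on $\Sigma^{(2)}\cap\mathcal{U}_{\pm k_0}$, where $l$ is small and no exponential smallness is available: this is exactly where the steepest-descent decay degenerates and the local parabolic-cylinder model $M^{(SA)}$ becomes necessary. Here I would not try to beat the exponential but convert it into an algebraic bound via the elementary inequality $e^{-a}\lesssim a^{-1/2}$. Taking $a=24tk_0 l^2$ from the quadratic part of $\mathrm{Re}(i\theta)$ (equivalently $\theta''(\pm k_0)=24(\pm k_0)$) yields $|e^{\pm2it\theta}|\lesssim (tk_0 l^2)^{-1/2}=t^{-1/2}k_0^{-1/2}|k\mp k_0|^{-1}$, and since $0<k_0\leq C$ forces $k_0^{-1/2}=k_0^{1/2}k_0^{-1}\lesssim k_0^{-1}$, this gives the third bound $t^{-1/2}k_0^{-1}|k\mp k_0|^{-1}$. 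I expect this inside-neighborhood estimate to be the main obstacle, since it is the one that is \emph{not} exponentially small and therefore dictates the $t^{-1/2}$ order of the eventual error; the outside and vertical-segment estimates are then routine consequences of the phase computation together with $l>\rho/2$.
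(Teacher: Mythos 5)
Your proposal is correct and follows exactly the route the paper intends: expand the phase as $i\theta(k)=4i\bigl((k\mp k_0)^3\pm 3k_0(k\mp k_0)^2\pm 2k_0^3\bigr)$, use the uniform boundedness of the non-exponential factors $R_j^{\pm}$ from Proposition~\ref{proR}, read off the exponential rates on each piece of $\Sigma^{(2)}$, and then restrict to $|k\mp k_0|>\rho/2$ (the paper states this proposition without proof, but this is precisely the argument sketched for its earlier companion estimate). The only ingredient not present in the paper's sketch is your treatment of $\Sigma^{(2)}\cap\mathcal{U}_{\pm k_0}$ via $e^{-a}\lesssim a^{-1/2}$ applied to the quadratic part of the phase together with $k_0\leq C$, which is the standard device for this non-uniformly-decaying piece and correctly yields the $t^{-1/2}k_0^{-1}|k\mp k_0|^{-1}$ bound.
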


\begin{remark}
   In the open neighborhood $\mathcal{U}_{\pm k_0}$, $M^{(2)}_{RHP}$ has no poles, and for $k \in  \Sigma^{(2)} \cap \mathcal{U}_{\pm k_0}$,  the jump matrix $V^{(2)}$  is uniformly bounded point by point but does not uniformly decay to the identity matrix.  Nevertheless,  for $k \in  \Sigma^{(2)} \setminus \mathcal{U}_{\pm k_0}$,  the jump matrix $V^{(2)}$    uniformly decays to the identity matrix.
\end{remark}

In the absence of solitons, the original RHP3 can be reduced to  a solvable model for the Sasa-Satsuma equation  \cite{Liuhuan}. \\

 \noindent\textbf{RHP8}.  Find a matrix-valued function $M^{(SA)}(k;x,t) $  which satisfies
   \begin{itemize}
       \item[(a)] Analyticity:  $ M^{(SA)}(k;x,t)  $ is continuous in $\mathbb{C}\setminus    \Sigma^{(2)}   $;
       \item[(b)] Symmetry: $ M^{(SA)}(k ) = \nu  M^{(SA) *}(-k^{*})  \nu   $;
      \item[(c)] Asymptotic behaviors:  $  M^{(SA)}(k)  \to  I$, \quad $ k \to \infty$;
      \item[(d)] Jump condition:  $  M^{(SA)}(k;x,t) $ has the following jump condition
                \begin{equation}
                    M_{+}^{(SA)}(k)=M_{-}^{(SA)}(k)V^{(SA)}(k), \hspace{0.5cm}k \in \Sigma^{(2)},
                \end{equation}
                where jump matrix $V^{(SA)}(k)= V^{(2)}(k)$   is given by Eq. (\ref{jump2}).
  \end{itemize}

The proposition 6 indicates that      the jump matrix $V^{(SA)}$    uniformly decays to the identity matrix outside  $\mathcal{U}_{\pm k_0}$, thus the main contribution to RHP6 comes from  the local solvable RHP  which is defined in the neighborhood $\mathcal{U}_{\pm k_0}$.  Here, we mainly adopt the final results of solving the model RHP, see \cite{Liuhuan} for more details.

    \begin{figure}
        \begin{center}
        \begin{tikzpicture}
        \draw[dashed] (-5,0)--(5,0);
        \draw[dashed]  [   ](-5,0)--(-3,0);
        \draw[dashed]  [   ](3,0)--(5,0);

        \draw[thick ](-5,-3)--(-2,0);
        \draw[thick,-> ](-5,-3)--(-3,-1);
        \draw[thick, -> ](-2,0)--(-1.5,0.5);
        \draw[thick ](-1,1)--(-2,0);

         \draw[thick ](-5,3)--(-2,0);
        \draw[thick,-> ](-5,3)--(-3,1);
        \draw[thick, -> ](-2,0)--(-1.5,-0.5);
        \draw[thick](-2,0)--(-1,-1);

         \draw[thick](5,3)--(2,0);
        \draw[thick,-> ](2,0)--(3,1);
        \draw[thick,->](1,-1)--(1.5,-0.5);
        \draw[thick](2,0)--(1,-1);

         \draw[thick ](5,-3)--(2,0);
        \draw[thick,-> ](2,0)--(3,-1);
        \draw[thick, -> ](1,1)--(1.5,0.5);
        \draw[thick ](1,1)--(2,0);

        \node  [below]  at (-3.5 ,2.5) {$\Sigma_{A}^{'}$};
        \node  [below]  at (3.5 ,2.5) {$ \Sigma_{B}^{'} $};

        \node  [below]  at (-2.2 ,-0.5) {$-k_0$};
        \node  [below]  at (2.2,-0.5) {$ k_0 $};
        \node  [below]  at (0,-0.2) {$0$};

       \node  at (5, 0.2) {$\mathrm{Re}k$};

        \node    at (0,0)  {$\cdot$};
        \end{tikzpicture}
        \end{center}
        \caption{ \footnotesize The jump contour corresponding to the jump matrix  $V^{(SA)}$.}
        \label{upcomplex}
        \end{figure}
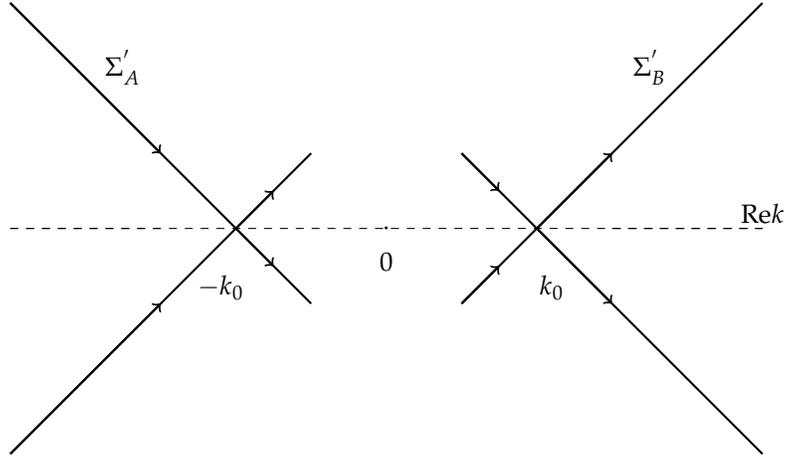

Based on the Beals-Coifman theory and Deift-Zhou steepest descent method, we can obtain the following formula corresponding to $M^{(SA)}$:
\begin{equation}
   \begin{aligned}
      M^{(SA)}(k) & = I +  \frac{1}{2\pi i} \int_{\Sigma^{(2)}}  \frac{\kappa(\xi) \omega(\xi) }{\xi -z} d\xi \\
      & =  I +  \frac{1}{2\pi i} \int_{\Sigma_{A}^{'}}  \frac{\kappa_{A}(\xi) \omega(\xi) }{\xi -z} d\xi  + \frac{1}{2\pi i} \int_{\Sigma_{B}^{'}}  \frac{\kappa_{B}(\xi) \omega(\xi) }{\xi -z} d\xi + \mathcal{O} (\frac{\log t }{t })  \\
      & =I - \frac{1}{\sqrt{48 t k_0} (k + k_0)} M_{1}^{A_{0}} -   \frac{1}{\sqrt{48 t k_0} (k - k_0)} M_{1}^{B_{0}} + \mathcal{O} (\frac{\log t }{t }).
   \end{aligned}
\end{equation}
Note that  the symmetry $M_{1}^{B_0}= - \varsigma (M_1^{A_0})^{*} \varsigma$,  we have
\begin{equation}
   \begin{aligned}
      M^{(SA)}(k) =I - \frac{1}{\sqrt{48 t k_0} (k + k_0)} M_{1}^{A_{0}} +    \frac{1}{\sqrt{48 t k_0} (k - k_0)} \varsigma  (M_1^{A_0})^{*} \varsigma  + \mathcal{O} (\frac{\log t }{t }),
   \end{aligned}
\end{equation}
where
\begin{equation}
    \begin{aligned}
     & M_{1}^{A_0}= \begin{pmatrix}   0 &  i (\delta_{A})^{-2} \beta_{12}  \\ -i (\delta_{A})^{2} \beta_{21}  &  0 \end{pmatrix},  \qquad \delta_{A}= e^{\mathcal{X}(-k_0)-8i \tau} (192\tau)^{\frac{i \nu}{2}},\\
   & \beta_{12}=  \frac{\nu \Gamma(-i\nu) e^{\frac{\pi \nu}{2}}   e^{-\frac{\pi i}{4}}    }{\sqrt{2 \pi }} \sigma_{2} \gamma^{T}(k_0),  \qquad  \beta_{21}= - \beta_{12}^{\dag} = \frac{\nu \Gamma(i\nu) e^{\frac{\pi \nu}{2}}   e^{-\frac{3\pi i}{4}}    }{\sqrt{2 \pi }} \gamma(-k_0).
   \end{aligned}
\end{equation}
At the same time, we obtain  that  $|| M^{(SA)}  ||_{\infty}   \lesssim 1$. Clearly,   RHP3 and RHP5 have the same jump contour and jump matrices, thus, we can define a local solvable model RHP inside $\mathcal{U}_{\pm k_0}$
\begin{equation}
  M^{(LC)}(k)= M^{(out)} (k) M^{(SA)}(k),  \qquad k \in  \mathcal{U}_{\pm k_0}.
\end{equation}
Moreover, $M^{(LC)}(k)$ is a bounded function in $\mathcal{U}_{\pm k_0}$ and has the same jump condition as $M^{(2)}_{RHP}(k)$.

\subsubsection{A small norm RH problem }

In this subsection, we mainly consider the small norm RHP  corresponding to the error function $Er(k)$. Firstly,  according to the definition of $M^{(2)}_{RHP} (k)$  and $M^{(LC)}(k)$, we can obtain that $Er(k)$ satisfies the following RHP:  \\

 \noindent\textbf{RHP9}.  Find a matrix-valued function $Er(k)$  which satisfies
   \begin{itemize}
       \item[(a)] Analyticity:  $  Er(k)  $ is continuous in $\mathbb{C}\setminus    \Sigma^{(Er)}   $, where
            $         \Sigma^{(Er)} = \partial \mathcal{U}_{\pm k_0} \cup  \left(  \Sigma^{(Er)} \setminus  \mathcal{U}_{\pm k_0} \right)$;
      \item[(b)] Asymptotic behaviors:  $ Er(k)  \to  I$, \quad $ k \to \infty$;
      \item[(c)] Jump condition:  $  Er(k) $ has the following jump condition
                \begin{equation}
                    Er_{+}(k)= Er_{-}(k) V^{(Er)}(k), \hspace{0.5cm}k \in \Sigma^{(Er)},
                \end{equation}
                where matrix $V^{(Er)}(k)$ is defined by
                \begin{equation}
                     V^{(Er)}(k)= \left\{
                             \begin{aligned}
                                 &  M^{(out)}(k) V^{(2)}(k)  M^{(out)}(k)^{-1}, &  k \in \Sigma^{(2)} \setminus \mathcal{U}_{\pm k_0},  \\
                                 &  M^{(out)}(k)  M^{(SA)}(k)  M^{(out)}(k)^{-1}, & k \in  \partial  \mathcal{U}_{\pm k_0}.
                             \end{aligned}
                     \right.
                \end{equation}
  \end{itemize}

\begin{figure}
        \begin{center}
\begin{tikzpicture}

\node[anchor=south west,inner sep=0] (image) at (0,0)
 {\includegraphics[width=10cm,height=6cm]{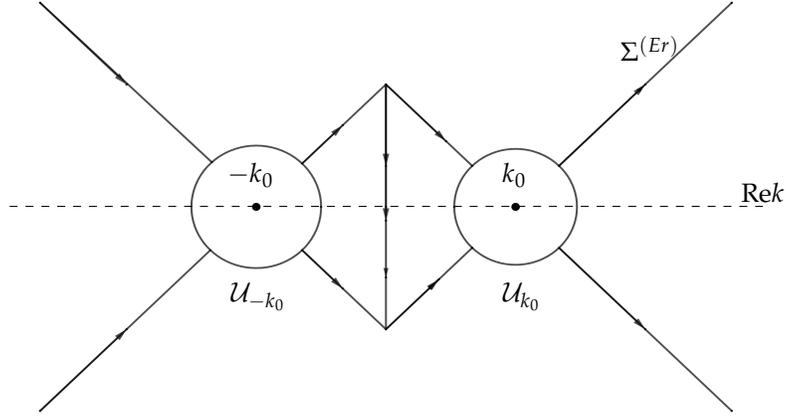}};
    \begin{scope}[x={(image.south east)},y={(image.north west)}]

     \node at (1.0,0.53) {$\mathrm{Re} k$};
      \node at  (0.67,0.57)  {$k_0$};
     \node at (0.32,0.57)  {$-k_0$};
     \node at  (0.68,0.3)  {$\mathcal{U}_{k_0}$};
     \node at (0.33,0.3)  {$\mathcal{U}_{-k_0}$};
      \node at (0.85,0.85)  {$\Sigma^{(Er)}$};

     \draw[dashed] (0,0.5)--(1,0.5);
     \end{scope}

\end{tikzpicture}
      \end{center}
       \caption{\footnotesize  The jump contour $\Sigma^{(Er)}$  for the error function $Er(k)$.}
       \end{figure}

\begin{proposition}
      The jump matrix $V^{(Er)}(k)$ has the following  uniform  estimate
      \begin{equation}\label{Erestimate}
         |V^{(Er)}(k)-I | =   \left\{     \begin{aligned}
                      & \mathcal{O} \left(            e^{ - 6 t k_0 \rho^2 }   \right),  &  k \in  \Sigma_{j}^{\pm} \setminus \mathcal{U}_{\pm k_0}, \quad j=1,2, \\
                      &  \mathcal{O} \left(            e^{ - 8  t k_0^2  \rho  }   \right),   & k \in  \Sigma_{j}^{\pm} \setminus \mathcal{U}_{\pm k_0}, \quad j=3,4, \\
                      &     0, &  k \in  [-i k_0  \tan(\frac{\pi}{12}), i k_0 \tan(\frac{\pi }{12})],\\
                      &  \mathcal{O} \left(            e^{  -14 t k_0^3 \tan^3(\frac{\pi}{12}) }   \right),   & k \in  [ \pm i  k_0,  \pm i k_0 \tan(\frac{\pi}{12})],  \\
                      &  \mathcal{O}(t^{-1/2}), &  k \in \partial \mathcal{U}_{\pm k_0}.
                      \end{aligned}   \right.
      \end{equation}
\end{proposition}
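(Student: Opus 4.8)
The plan is to exploit the fact that $V^{(Er)}$ is defined by conjugating already-controlled quantities by the outer parametrix $M^{(out)}$, so that each piece of the claimed estimate reduces to a bound already in hand together with the uniform boundedness of $M^{(out)}$ and its inverse. Concretely, on $\Sigma^{(2)}\setminus\mathcal{U}_{\pm k_0}$ one writes
\begin{equation}
 V^{(Er)}(k)-I = M^{(out)}(k)\left(V^{(2)}(k)-I\right)M^{(out)}(k)^{-1},
\end{equation}
while on $\partial\mathcal{U}_{\pm k_0}$ one has
\begin{equation}
 V^{(Er)}(k)-I = M^{(out)}(k)\left(M^{(SA)}(k)-I\right)M^{(out)}(k)^{-1}.
\end{equation}
In both cases submultiplicativity of the Frobenius norm gives $|V^{(Er)}(k)-I|\lesssim |M^{(out)}(k)|\,|M^{(out)}(k)^{-1}|$ times the norm of the central factor, so everything hinges on controlling that middle factor and the two conjugating factors.

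First I would establish that $M^{(out)}$ and $M^{(out)-1}$ are uniformly bounded on $\Sigma^{(Er)}$. Because $M^{(out)}$ solves RHP7 (equivalently RHP5), its only singularities are the simple poles at $\mathscr{K}\cup\overline{\mathscr{K}}$, which lie at distance at least $\rho$ from $\mathbb{R}$; since the neighborhoods $\mathcal{U}_{\pm k_0}$ have radius $\rho/2$ and are centered on the real axis, the whole contour $\Sigma^{(Er)}$ stays in the pole-free region where $M^{(out)}$ is analytic and tends to $I$ at infinity. Hence $|M^{(out)}(k)|\lesssim 1$; and since $\det M^{(out)}\equiv 1$, the entries of $M^{(out)-1}$ are cofactors of $M^{(out)}$, so $|M^{(out)-1}(k)|\lesssim 1$ as well.

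Next, on the pieces $\Sigma_j^{\pm}\setminus\mathcal{U}_{\pm k_0}$ and on the imaginary segments I would insert the bounds of Proposition 6 for $\|V^{(2)}-I\|$ into the conjugation identity: the exponential factors $e^{-6tk_0\rho^2}$, $e^{-8tk_0^2\rho}$ and $e^{-14tk_0^3\tan^3(\pi/12)}$ survive unchanged up to the harmless constant from $|M^{(out)}|\,|M^{(out)-1}|$, and the identity jump on $[-ik_0\tan(\frac{\pi}{12}),ik_0\tan(\frac{\pi}{12})]$ is preserved. These are exactly the first four lines of the claimed estimate. For the boundary circles $\partial\mathcal{U}_{\pm k_0}$ I would use the explicit expansion of $M^{(SA)}$ derived above, in which $M^{(SA)}(k)-I$ equals $-\frac{1}{\sqrt{48tk_0}(k+k_0)}M_1^{A_0}+\frac{1}{\sqrt{48tk_0}(k-k_0)}\varsigma(M_1^{A_0})^{*}\varsigma+\mathcal{O}(\log t/t)$; on $|k\mp k_0|=\rho/2$ the denominators are bounded below, so $M^{(SA)}(k)-I=\mathcal{O}(t^{-1/2})$, which yields the last line.

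The main obstacle I anticipate is the uniform boundedness step, since it requires that the deformed contour $\Sigma^{(Er)}$ genuinely avoids $\mathscr{K}\cup\overline{\mathscr{K}}$ and that the control on $M^{(out)}$ be uniform in $(x,t)$ along the cone, rather than merely pointwise in $k$. This uses the construction of $R^{(2)}$ through the cutoff $\mathcal{X}_{\mathscr{K}}$, which keeps the contour at distance at least $\rho/3$ from the discrete spectrum, together with the boundedness of the soliton parametrix established for RHP5. Once that is secured, the remaining bounds follow immediately from Proposition 6 and the explicit form of $M^{(SA)}$.
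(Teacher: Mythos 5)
Your proposal is correct and follows essentially the same route as the paper, whose proof is the one-line remark that the result follows from the definition of $Er(k)$ (i.e.\ the conjugation of $V^{(2)}$ and $M^{(SA)}$ by $M^{(out)}$), the estimates of Proposition 6, and the explicit expansion of $M^{(SA)}$. Your added justification of the uniform boundedness of $M^{(out)}$ and $M^{(out)-1}$ on $\Sigma^{(Er)}$ simply fills in a detail the paper leaves implicit.
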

\begin{proof}
     Using the definition of $Er(k)$, the estimate of $V^{(2)}$ and the expression of $M^{(SA)}(k)$, we can easily complete the proof of the above proposition.
\end{proof}

Next, from Beals-Coifman theorem, we can construct the solution of RHP7. At first, we consider the trivial factorization of the jump matrix $V^{(Er)}$
\begin{equation}
  V^{(Er)}=(b_{-})^{-1} b_{+}, \quad b_{-}= I, \quad b_{+}=V^{(Er)},
\end{equation}
and
\begin{equation}
    \begin{aligned}
    & (\omega_{Er})_{-}= I-b_{-}=0,   \quad   (\omega_{Er})_{+}= b_{+} -I =  V^{(Er)} -I, \\
    & \omega_{Er}=(\omega_{Er})_{+} +(\omega_{Er})_{-} = V^{(Er)} -I.  \\
    & C_{\omega_{Er}} f = C_{-} (f (\omega_{Er})_{+} ) +  C_{+} (f (\omega_{Er})_{-} )= C_{-} (f(V^{(Er)}-I)),
   \end{aligned}
\end{equation}
where $C_{-}$ denotes the Cauchy projection operator,
\begin{equation}
    C_{-}f(k) = \lim_{\zeta \to k \in \Sigma^{(E)}}  \frac{1}{2 \pi i}  \int_{\Sigma^{(E)}} \frac{f(\xi)}{\xi-\zeta} d\xi,
\end{equation}
and $||C_{-}||_{L^{2}}$ is a finite value.  Therefore, the solution of RHP7 can be expressed by
\begin{equation}
     Er(k)= I + \frac{1 }{2  \pi i} \int_{\Sigma^{(Er)}}  \frac{\kappa_{Er}(\xi)   (V^{(Er)}(\xi) - I)          }{\xi -k }  d\xi,
\end{equation}
where $\kappa_{Er}  \in L^{2}(\Sigma^{(E)})$ satisfies $(I- C_{\omega_{Er}}) \kappa_{E} = I $.

Since
\begin{equation}
    ||C_{\omega_{Er}}||_{L^{2}(\Sigma^{(Er)})} \lesssim    ||C_ ||_{L^{2}(\Sigma^{(Er)})}  ||V^{(Er)} -I ||_{L^{\infty}(\Sigma^{Er})}  \lesssim  \mathcal{O}(t^{-1/2}),
\end{equation}
matrix function $\kappa_{\omega_{Er}}$ exists and is unique, and the solution $Er(k)$ of RHP exists and is unique.

\begin{proposition}
   For  $V^{(Er)}$ and $\kappa_{Er}$,  we have the following important estimate
   \begin{equation}
          \begin{aligned}
              &  ||\kappa_{Er}-I||_{L^{2}({\Sigma^{(Er)}})} = \mathcal{O}(t^{-1/2}),  \\
             &  ||V^{(Er)}-I||_{L^{p}} = \mathcal{O} (t^{-1/2}) , \quad  p \in [1,+\infty), \quad k \geq 0.
          \end{aligned}
   \end{equation}
\end{proposition}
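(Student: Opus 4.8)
The plan is to treat RHP9 as a small-norm Riemann--Hilbert problem and to run the standard Beals--Coifman argument, reading off the rate $t^{-1/2}$ from the pointwise bounds already recorded in (\ref{Erestimate}). I would establish the two estimates in the order opposite to their listing: first the $L^p$ bound on $V^{(Er)}-I$, since this is the quantity that drives everything, and then the $L^2$ bound on $\kappa_{Er}-I$, which is deduced from it.

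For the $L^p$ estimate I split the contour $\Sigma^{(Er)}$ into the two circles $\partial\mathcal{U}_{\pm k_0}$, of fixed finite length $\pi\rho$, and the remaining portion $\Sigma^{(Er)}\setminus\mathcal{U}_{\pm k_0}$. On the circles, (\ref{Erestimate}) gives the uniform pointwise bound $|V^{(Er)}-I|=\mathcal{O}(t^{-1/2})$, coming from $M^{(SA)}-I=\mathcal{O}((tk_0)^{-1/2})$ conjugated by the bounded matrices $M^{(out)}$ and $(M^{(out)})^{-1}$; since the length is finite, $\|V^{(Er)}-I\|_{L^p(\partial\mathcal{U}_{\pm k_0})}\le(\pi\rho)^{1/p}\|V^{(Er)}-I\|_{L^\infty}=\mathcal{O}(t^{-1/2})$ uniformly for $p\in[1,\infty)$. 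On the remaining rays and imaginary segments, (\ref{Erestimate}) together with the sharper decay in the contour parameter inherited from the oscillatory factor $e^{2it\theta}$ (i.e. $e^{-4\sqrt2\, t|k\mp k_0|^3-24tk_0|k\mp k_0|^2}$ on $\Sigma_{1,2}^\pm$ and $e^{-16tk_0^2|k\mp k_0|}$ on $\Sigma_{3,4}^\pm$) makes $|V^{(Er)}-I|^p$ integrable with integral exponentially small in $t$; hence this piece contributes $\mathcal{O}(e^{-ct})$ to every $L^p$ norm and is dominated by $t^{-1/2}$. Combining the two pieces yields $\|V^{(Er)}-I\|_{L^p}=\mathcal{O}(t^{-1/2})$ for all finite $p$.

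For the $\kappa_{Er}$ estimate I start from $(I-C_{\omega_{Er}})\kappa_{Er}=I$. Using the boundedness of the Cauchy projection $C_-$ on $L^2(\Sigma^{(Er)})$ and the $L^\infty$ bound from (\ref{Erestimate}), one has $\|C_{\omega_{Er}}\|_{L^2}\lesssim\|C_-\|_{L^2}\|V^{(Er)}-I\|_{L^\infty}=\mathcal{O}(t^{-1/2})$, so for $t$ large the operator norm is $<1$ and the resolvent $(I-C_{\omega_{Er}})^{-1}$ exists with $\|(I-C_{\omega_{Er}})^{-1}\|_{L^2}\le(1-\|C_{\omega_{Er}}\|_{L^2})^{-1}\lesssim1$. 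Writing $\kappa_{Er}-I=(I-C_{\omega_{Er}})^{-1}C_{\omega_{Er}}I$ and $C_{\omega_{Er}}I=C_-(V^{(Er)}-I)$, I bound $\|\kappa_{Er}-I\|_{L^2}\lesssim\|C_-(V^{(Er)}-I)\|_{L^2}\lesssim\|V^{(Er)}-I\|_{L^2}=\mathcal{O}(t^{-1/2})$, the last equality being the $p=2$ case just established.

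The main obstacle is the $L^p$ control on the non-compact parts of $\Sigma^{(Er)}$: one must confirm that the genuine $t^{-1/2}$ rate is produced solely on the circles $\partial\mathcal{U}_{\pm k_0}$, where the local model $M^{(SA)}$ matches the identity only to order $t^{-1/2}$, while the ray and segment contributions are truly exponentially small and uniform in $p$. This forces me to use the decay of the integrand \emph{along} each ray, not merely the uniform endpoint bound of (\ref{Erestimate}), so that the improper integrals converge and stay $\mathcal{O}(e^{-ct})$; once this is in hand, the remainder is the routine Neumann-series small-norm machinery.
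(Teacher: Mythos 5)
Your argument is correct and is exactly the standard small-norm/Beals--Coifman reasoning that the paper relies on: the paper states this proposition without proof, but the surrounding text already records the two ingredients you use, namely $\|C_{\omega_{Er}}\|_{L^2}\lesssim \|C_-\|\,\|V^{(Er)}-I\|_{L^\infty}=\mathcal{O}(t^{-1/2})$ and the pointwise bounds (\ref{Erestimate}). You are also right to insist on the decay of $V^{(Er)}-I$ \emph{along} the unbounded rays (inherited from the cubic/quadratic exponential factors in $|k\mp k_0|$, with $M^{(out)}$ and its inverse bounded on the contour) rather than only the uniform endpoint bound, since that is what makes the $L^p$ norms on $\Sigma^{(Er)}\setminus\mathcal{U}_{\pm k_0}$ finite and exponentially small for every finite $p$; this is the one point the paper glosses over entirely.
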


In order to recover  the solution $\boldsymbol{q}_(x,t)$,  it is necessary to consider the long time asymptotic of $Er(k)$ as $k \to \infty$.

\begin{proposition}
     As $k \to \infty$,  $Er(k)$ has the following asymptotic expansion
         \begin{equation}
             Er(k) =  I  + k^{-1} Er_{1}(k) + \mathcal{O} (k^{-2}),
         \end{equation}
         where
         \begin{equation}\label{Er}
             Er_{1}(k) = - \frac{1}{2 \pi i} \int_{\Sigma^{(Er)}} \kappa_{Er}(\xi) (V^{Er} - I ) d\xi.
         \end{equation}
         Moreover, $Er_{1}(k)$  has the following explicit expression
         \begin{equation}
            \begin{aligned}
             Er_{1}( x,t) = &\frac{1}{\sqrt{48t k_0}} M^{(out)} (k_0) M_{1}^{A_0}(k_0)  M^{(out)-1} (k_0)  +
               \\  & \frac{1}{\sqrt{48t k_0}} M^{(out)} (-k_0) M_{1}^{B_0}(-k_0)  M^{(out)-1} (-k_0) + \mathcal{O} (t^{-1} \log t).
             \end{aligned}
         \end{equation}
\end{proposition}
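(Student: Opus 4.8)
The plan is to read off the asymptotics of $Er(k)$ directly from its integral representation
\begin{equation}
     Er(k)= I + \frac{1}{2\pi i} \int_{\Sigma^{(Er)}} \frac{\kappa_{Er}(\xi)\,(V^{(Er)}(\xi) - I)}{\xi -k}\, d\xi,
\end{equation}
by expanding the Cauchy kernel for large $k$. Writing $(\xi-k)^{-1} = -k^{-1} - \xi k^{-2} + \mathcal{O}(k^{-3})$ and substituting term by term produces the claimed expansion $Er(k) = I + k^{-1} Er_1(k) + \mathcal{O}(k^{-2})$ with $Er_1$ given by (\ref{Er}). To justify interchanging the expansion with the integral and to control the remainder, I would invoke the bounds $\|\kappa_{Er}-I\|_{L^2} = \mathcal{O}(t^{-1/2})$ and $\|V^{(Er)}-I\|_{L^p} = \mathcal{O}(t^{-1/2})$ from the preceding proposition; together with the compactness of $\Sigma^{(Er)}$ these make both $\int \kappa_{Er}(V^{(Er)}-I)\,d\xi$ and $\int \xi\,\kappa_{Er}(V^{(Er)}-I)\,d\xi$ finite, so the $\mathcal{O}(k^{-2})$ coefficient is well defined and the expansion is legitimate.

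For the explicit form of $Er_1$, the first step is to split the contour into the two boundary circles $\partial\mathcal{U}_{\pm k_0}$ and the remainder $\Sigma^{(Er)}\setminus\mathcal{U}_{\pm k_0}$. On the remainder, estimate (\ref{Erestimate}) shows that $V^{(Er)}-I$ decays exponentially in $t$, so that portion of the integral is swallowed by the error term. On the circles I would write $\kappa_{Er} = I + (\kappa_{Er}-I)$ and discard the correction: since both $\kappa_{Er}-I$ and $V^{(Er)}-I$ are $\mathcal{O}(t^{-1/2})$ in $L^2$ there, the Cauchy--Schwarz inequality bounds their product contribution by $\mathcal{O}(t^{-1})$, which is within the stated $\mathcal{O}(t^{-1}\log t)$. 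This reduces the computation to
\begin{equation}
    Er_1 = -\frac{1}{2\pi i}\left( \oint_{\partial\mathcal{U}_{k_0}} + \oint_{\partial\mathcal{U}_{-k_0}} \right) \big(V^{(Er)}(\xi)-I\big)\, d\xi + \mathcal{O}(t^{-1}\log t).
\end{equation}

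The final step is a residue evaluation on each circle. I would substitute $V^{(Er)}-I = M^{(out)}(M^{(SA)}-I)\,M^{(out)-1}$ and insert the explicit expansion of $M^{(SA)}$, whose leading part is a sum of two simple-pole terms with poles at $\pm k_0$ carrying the matrices $M_1^{A_0}$ and $\varsigma (M_1^{A_0})^* \varsigma = -M_1^{B_0}$. Since $M^{(out)}$ is analytic and invertible throughout $\mathcal{U}_{\pm k_0}$ (its only poles $k_j$ lie at distance at least $\rho$ from $\pm k_0$), the conjugated integrand is analytic inside each disk except at the single explicit pole enclosed by that circle. The residue theorem then evaluates each circle integral by the residue at the enclosed pole, producing the conjugating factors $M^{(out)}(\pm k_0)$ and $M^{(out)-1}(\pm k_0)$ automatically, together with the prefactor $1/\sqrt{48 t k_0}$; the $\mathcal{O}(\log t / t)$ remainder of $M^{(SA)}$ integrates to the $\mathcal{O}(t^{-1}\log t)$ error. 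Collecting the two residues and using the symmetry $M_1^{B_0} = -\varsigma (M_1^{A_0})^* \varsigma$ to display them symmetrically yields precisely the two-term formula in the statement.

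I expect the main obstacle to be the bookkeeping in this last residue computation: one must fix a consistent orientation for $\partial\mathcal{U}_{\pm k_0}$, verify carefully that no pole of $M^{(out)}$ intrudes into $\mathcal{U}_{\pm k_0}$ so that the only contributions are the explicit simple poles of $M^{(SA)}$, and track the signs through the symmetry relation so that the contribution of the circle about $k_0$ and that about $-k_0$ appear with the correct matrices. Everything else reduces to the large-$k$ kernel expansion and the norm estimates already at hand.
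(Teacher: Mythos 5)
Your proposal matches the paper's proof essentially step for step: the same three-way splitting of the integral (the exponentially small off-circle part, the $(\kappa_{Er}-I)(V^{(Er)}-I)$ correction controlled by Cauchy--Schwarz to $\mathcal{O}(t^{-1})$, and the leading circle integrals $\oint_{\partial\mathcal{U}_{\pm k_0}}(V^{(Er)}-I)\,d\xi$), followed by the same residue evaluation after substituting $V^{(Er)}-I=M^{(out)}(M^{(SA)}-I)M^{(out)-1}$ and the explicit pole expansion of $M^{(SA)}$. The only quibble is that $\Sigma^{(Er)}$ is not compact (it contains the unbounded rays $\Sigma_j^{\pm}\setminus\mathcal{U}_{\pm k_0}$), so finiteness of the moment integrals in your large-$k$ kernel expansion should be justified by the exponential decay of $V^{(Er)}-I$ on those rays rather than by compactness -- an estimate you already have in hand.
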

\begin{proof}
       From Eq. (\ref{Er})
          \begin{equation}
            \begin{aligned}
             Er_{1}( x,t) &=  - \frac{1}{2 \pi i} \int_{\Sigma^{(Er)}} \kappa_{Er}(\xi) (V^{Er} - I ) d\xi  \\
                & =  - \frac{1}{2 \pi i} \oint_{ \partial {U}_{\pm k_0 }}  (V^{Er} - I ) d\xi - \frac{1}{2 \pi i} \int_{  \Sigma^{(Er)} \setminus {U}_{\pm k_0 }}  (V^{Er} - I ) d\xi  \\
                 & \qquad  - \frac{1}{2 \pi i} \int_{\Sigma^{(Er)}} (\kappa_{Er}(\xi)-I) (V^{Er} - I ) d\xi,
             \end{aligned}
         \end{equation}
         Using Eq. (\ref{Erestimate}),  one obtain
         \begin{equation}
           \left|   - \frac{1}{2 \pi i} \int_{  \Sigma^{(Er)} \setminus {U}_{\pm k_0 }}  (V^{Er} - I ) d\xi  \right| \lesssim \mathcal{O} (t^{-p/2}), \quad (p>1).
         \end{equation}
         Moreover,  according to  proposition 8,  we have
         \begin{equation}
           \left|  - \frac{1}{2 \pi i} \int_{\Sigma^{(Er)}} (\kappa_{Er}(\xi)-I) (V^{Er} - I ) d\xi  \right| \lesssim ||\kappa_{Er}-I||_{L^{2}({\Sigma^{(Er)}})}  ||V^{(Er)}-I||_{L^{2}({\Sigma^{(Er)}})} =\mathcal{O}(t^{-1}).
          \end{equation}
          Combining the above analysis,  we obtain
          \begin{equation}
              Er_1(k)=  - \frac{1}{2 \pi i} \oint_{ \partial {U}_{\pm k_0 }}  (V^{Er} - I ) d\xi +  \mathcal{O} (t^{-1} ).
          \end{equation}
         Substituting the expression of $M^{(SA)}(k;x,t)$ and use  the residue theorem,  the final result  is as follows:
          \begin{equation}
             \begin{aligned}
             Er_1(k)& =  - \frac{1}{2 \pi i} \oint_{ \partial {U}_{\pm k_0 }}   M^{(out)}(\xi) (M^{(SA)} -I ) M^{(out)}(\xi)^{-1}d\xi  + \mathcal{O}(t^{-1})   \\
               &  = \frac{1}{\sqrt{48t k_0}} M^{(out)} (k_0) M_{1}^{A_0}(k_0)  M^{(out)-1} (k_0)  +
               \\  &   \qquad  \frac{1}{\sqrt{48t k_0}} M^{(out)} (-k_0) M_{1}^{B_0}(-k_0)  M^{(out)-1} (-k_0) +  \mathcal{O} (t^{-1} \log t).
             \end{aligned}
          \end{equation}
\end{proof}

\subsection{ Analysis on a pure  $\overline{\partial}$-problem }

In this subsection, we mainly consider the pure  $\overline{\partial}$-problem which is obtained by removing the RHP part with $\overline{\partial} R^{(2)} =  0$.

Define
\begin{equation}
     M^{(3)} (k)= M^{(2)} (k) M^{(2)}_{RHP} (k)^{-1},
\end{equation}
we have that  $M^{(3)}$ is continuous and has no jumps in the complex plane.  Therefore, we obtain a pure $\overline{\partial}$-problem.  \\

 \noindent\textbf  {$\overline{\partial}$-problem1}.  Find a matrix-valued function $M^{(3)}(k)$  which satisfies
   \begin{itemize}
       \item[(a)]  $M^{(3)}(k)$ is continuous in $\mathbb{C}  \setminus \Sigma^{(2)}$;
      \item[(b)]  $M^{(3)}(k)$  $\sim$ I,  \quad $k \to \infty$;
      \item[(c)]  $\overline{\partial} M^{(3)}(k) =  M^{(3)}(k) W^{(3)}(k)$,  \quad $k \in \mathbb{C}$, 
where
            $$ W^{(3)}= M^{(2)}_{RHP} (k)  \overline{\partial} R^{(2)}(k)  M^{(2)}_{RHP} (k)^{-1}.$$
   \end{itemize}

The solution of pure $\overline{\partial}$-problem is given by the following integral equation
\begin{equation}\label{dbarsolution}
    M^{(3)}(k) = I -  \frac{1}{\pi} \iint_{\mathbb{C}} \frac{M^{(3)}(\xi)  W^{(3)} (\xi)}{\xi - k} d A(\xi),
\end{equation}
where $dA(\xi)$ is the Lebesgue measure. Further, we  write  the equation (\ref{dbarsolution}) in   operator form
\begin{equation}
     (I -S ) M^{(3)}(k) = I,
\end{equation}
where  $S$ is the Cauchy operator
\begin{equation}
     S[f](k)= - \frac{1}{\pi} \iint_{\mathbb{C}} \frac{M^{(3) }(\xi) W^{(3)}(\xi)}{\xi - k} d A(\xi).
\end{equation}

\begin{proposition}\label{pro11}
    For large time $t$,
    \begin{equation}
        ||S||_{L^{\infty} \to L^{\infty}} \lesssim (k_0 t )^{-1/4},
    \end{equation}
which implies that  the operator $(I-S)^{-1}$ is invertible and the solution of pure $\overline{\partial}$-problem exists and is unique.
\end{proposition}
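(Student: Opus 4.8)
The plan is to treat $S$ as a solid (two--dimensional) Cauchy integral operator and to reduce the operator-norm bound to a scalar area integral whose smallness in $t$ is supplied by the exponential factor $e^{\pm 2it\theta}$ carried by $\overline{\partial}R^{(2)}$. First I would use that the outer parametrix is uniformly controlled, $\|M^{(2)}_{RHP}\|_{\infty}\lesssim 1$ and $\|(M^{(2)}_{RHP})^{-1}\|_{\infty}\lesssim 1$ (from RHP4/RHP5 and the local model), so that $|W^{(3)}(\xi)|=|M^{(2)}_{RHP}\,\overline{\partial}R^{(2)}\,(M^{(2)}_{RHP})^{-1}|\lesssim|\overline{\partial}R^{(2)}(\xi)|$ pointwise. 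Then for every $f\in L^{\infty}$,
\[
|S[f](k)|\le \frac{\|f\|_{L^{\infty}}}{\pi}\iint_{\mathbb{C}}\frac{|\overline{\partial}R^{(2)}(\xi)|\,|e^{\pm 2it\theta(\xi)}|}{|\xi-k|}\,dA(\xi),
\]
so it suffices to show that the area integral on the right is $\lesssim(k_0 t)^{-1/4}$ uniformly in $k$.

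Since $\overline{\partial}R^{(2)}$ is supported in the eight sectors $\Omega_j^{\pm}$, vanishes on $\Omega_5\cup\Omega_6$ and within $\rho/3$ of $\mathscr{K}\cup\overline{\mathscr{K}}$, and by the $\pm k_0$ symmetry of the configuration, I would reduce to a single representative sector adjacent to $k_0$, say $\Omega_1^{+}$. Writing $z=\xi-k_0=u+iv$, the local form of the phase is $\theta(\xi)=-8k_0^{3}+12k_0 z^{2}+4z^{3}$, whence $\mathrm{Re}(2it\theta)=-2t\,\mathrm{Im}(12k_0 z^{2}+4z^{3})$; on this sector the quadratic term dominates near $k_0$ and has the correct sign, giving the decay estimate $|e^{2it\theta(\xi)}|\lesssim e^{-c\,t k_0\,|u|\,|v|}$ for a fixed $c>0$. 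I then split $\overline{\partial}R_1^{+}$ by Proposition~\ref{proR} into three pieces according to $|\overline{\partial}R_1^{+}|\lesssim|\overline{\partial}\mathcal{X}_{\mathscr{K}}|+|(p_1^{+})'(\mathrm{Re}\,k)|+|\xi-k_0|^{-1/2}$, producing integrals $I_1,I_2,I_3$ to be estimated separately.

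The first two pieces are mild. The $|\overline{\partial}\mathcal{X}_{\mathscr{K}}|$ piece is supported in a fixed annulus around $\mathscr{K}$, bounded away from $k_0$ and from $\mathbb{R}$, where $\mathrm{Im}\,\theta$ is bounded below, so $I_1=\mathcal{O}(e^{-ct})$. For $I_2$, the reflection coefficient is Schwartz, hence $(p_1^{+})'\in L^{2}(\mathbb{R})$; applying Cauchy--Schwarz in $u$ against the standard bound $\||\xi-k|^{-1}\|_{L^{2}_{u}}\lesssim v^{-1/2}$, and using that on the sector the exponential gives $e^{-ctk_0 v^{2}}$, yields $I_2\lesssim\int_0^{\infty}v^{-1/2}e^{-ctk_0 v^{2}}\,dv\lesssim(k_0 t)^{-1/4}$. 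The dominant and most delicate term is $I_3$, from the square-root singularity $|\xi-k_0|^{-1/2}=|z|^{-1/2}$: here I would apply H\"older in $u$ with an exponent $p\in(1,2)$ so that $\||\xi-k|^{-1}\|_{L^{p}_{u}}$ is finite, balance it against the anisotropic decay $e^{-ctk_0 uv}$, and then rescale $z$ by $(k_0 t)^{-1/2}$ to extract exactly the power $I_3\lesssim(k_0 t)^{-1/4}$. The main obstacle is precisely this competition in $I_3$ between the non-integrable-looking $|z|^{-1/2}$ singularity, the kernel singularity $|\xi-k|^{-1}$, and the direction-dependent decay of $e^{2it\theta}$; choosing the H\"older exponents so all three factors remain integrable while still producing the sharp $t^{-1/4}$ rate is the crux.

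Summing $I_1,I_2,I_3$ over all sectors $\Omega_j^{\pm}$ gives $\|S\|_{L^{\infty}\to L^{\infty}}\lesssim(k_0 t)^{-1/4}$. Consequently, for $t$ sufficiently large $\|S\|_{L^{\infty}\to L^{\infty}}<1$, so $I-S$ is invertible on $L^{\infty}$ with $(I-S)^{-1}=\sum_{n\ge 0}S^{n}$ a convergent Neumann series; therefore the integral equation $(I-S)M^{(3)}=I$ has a unique bounded solution $M^{(3)}$, which establishes existence and uniqueness for $\overline{\partial}$-problem1.
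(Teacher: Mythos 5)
The paper gives no proof of this proposition at all --- Remark 3 simply defers to the $\overline{\partial}$-analysis of Borghese--Jenkins--McLaughlin \cite{fNLS} --- and your proposal reconstructs exactly that standard argument (uniform boundedness of $M^{(2)}_{RHP}$ and its inverse away from the poles where $\overline{\partial}R^{(2)}$ vanishes, reduction to a scalar area integral over one representative sector, the three-way splitting of $\overline{\partial}R_j^{\pm}$, Cauchy--Schwarz/H\"older in the horizontal variable against the anisotropic decay $e^{-ctk_0|u|v}$, and the Neumann series), so it is essentially the proof the paper intends. The one imprecision is in $I_2$: the $L^2_u$ norm of the Cauchy kernel is $\lesssim |v-\mathrm{Im}\,k|^{-1/2}$ rather than $v^{-1/2}$, so the final $v$-integral must be bounded uniformly in $\mathrm{Im}\,k$ via the standard lemma $\int_0^\infty |v-a|^{-1/2}e^{-\varepsilon t v^2}\,dv \lesssim (\varepsilon t)^{-1/4}$ (the analogous adjustment is needed in the H\"older step for $I_3$); this is routine and does not affect the conclusion.
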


As $k \to \infty$,  the asymptotic expansion of $M^{(3)}(k)$  is given by 
\begin{equation}
    M^{(3)}= I + \frac{M_{1}^{(3)}}{k} - \frac{1}{\pi}  \iint_{\mathbb{C}} \frac{\xi M^{(3)   }  (\xi) W^{(3)}(\xi)                  }{ k ( \xi  -  k)  }  d A(\xi),
\end{equation}
where
\begin{equation}
    M_{1}^{(3)} =  \frac{1}{\pi}  \iint_{\mathbb{C}}     M^{(3) }(\xi)   W^{(3)}(\xi) dA(\xi).
\end{equation}

To reconstruct the solution $u(x,t)$ of Sasa-Satsuma equation,  it is necessary to consider the long time asymptotic  behavior  of $M^{(3)}_{1}$. Thus, we give the following  proposition
\begin{proposition} \label{pro12}
    For large time $t$, there exists the estimate for $M_{1}^{(3)}$
    \begin{equation}
          |M^{(3)}_{1}| \lesssim (k_0  t) ^{-3/4}.
    \end{equation}
\end{proposition}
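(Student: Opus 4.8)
The plan is to begin with the representation
\[
M_1^{(3)} = \frac{1}{\pi}\iint_{\mathbb{C}} M^{(3)}(\xi)\,W^{(3)}(\xi)\,dA(\xi),
\]
and to convert the matrix bound into a bound on a scalar area integral. By Proposition \ref{pro11} the operator $I-S$ is boundedly invertible once $k_0t$ is large, so $\|M^{(3)}\|_{L^{\infty}(\mathbb{C})}\lesssim 1$. On the support of $\overline{\partial}R^{(2)}$ the last clause of Proposition \ref{proR} keeps us at distance at least $\rho/3$ from $\mathscr{K}\cup\overline{\mathscr{K}}$, so there $M^{(2)}_{RHP}$ and $(M^{(2)}_{RHP})^{-1}$ are uniformly bounded (using boundedness of $M^{(out)}$ off its poles and the estimate $\|M^{(SA)}\|_{\infty}\lesssim 1$). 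Combining these with $W^{(3)}=M^{(2)}_{RHP}\,\overline{\partial}R^{(2)}\,(M^{(2)}_{RHP})^{-1}$, I would reduce the claim to
\[
|M_1^{(3)}| \lesssim \iint_{\mathbb{C}} |\overline{\partial}R^{(2)}(\xi)|\,dA(\xi),
\]
with the oscillatory factors $e^{\pm 2it\theta}$ kept inside $\overline{\partial}R^{(2)}$.

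Next I would split the integral over the eight sectors $\Omega_j^{\pm}$ and, by the symmetry of the construction, reduce to the sectors adjacent to the stationary point $k_0$, the point $-k_0$ being identical. On each sector I insert the pointwise bound of Proposition \ref{proR},
\[
|\overline{\partial}R_j^{\pm}(k)| \lesssim |\overline{\partial}\mathcal{X}_{\mathscr{K}}(k)| + |(p_j^{\pm})'(\mathrm{Re}\,k)| + |k\mp k_0|^{-1/2}.
\]
The $|\overline{\partial}\mathcal{X}_{\mathscr{K}}|$ contribution is supported in fixed annuli around the discrete spectrum, where $\mathrm{Re}\,i\theta$ is bounded away from zero, so there $|e^{\pm2it\theta}|\lesssim e^{-ct}$ and this term is exponentially small and negligible against $(k_0t)^{-3/4}$. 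It remains to treat the smooth part $|(p_j^{\pm})'|$ and the singular part $|k\mp k_0|^{-1/2}$.

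For these I would pass to local coordinates $k=k_0+u+iv$ and use the phase expansion $i\theta(k)=4i\big((k-k_0)^3+3k_0(k-k_0)^2-2k_0^3\big)$, whose quadratic term gives, on each sector, $\mathrm{Re}(\pm2it\theta)\lesssim -k_0 t\,|u|\,v$ for the exponential attached to that sector (the cubic term only strengthens the decay), i.e. $|e^{\pm2it\theta}|\lesssim e^{-ck_0t|u|v}$. Taking the representative sector $\{u\ge 0,\ 0\le v\le u\}$, for the singular part I bound $|k-k_0|^{-1/2}\lesssim u^{-1/2}$, integrate in $v$, and rescale by $w=\sqrt{ck_0t}\,u$ to get exactly $(k_0t)^{-3/4}$; for the smooth part I use $\int_0^u e^{-ck_0tuv}\,dv\le \min\{u,(ck_0tu)^{-1}\}$, split the $u$-integral at $u=(k_0t)^{-1/2}$, and apply Cauchy--Schwarz in $u$ with $\|(p_j^{\pm})'\|_{L^2}\lesssim 1$, each piece again contributing $\mathcal{O}((k_0t)^{-3/4})$. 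The remaining sectors (where instead $|k-k_0|^{-1/2}\lesssim v^{-1/2}$) are entirely analogous, and summing the finitely many sectors yields the stated estimate. I expect the main obstacle to be the sign bookkeeping of the phase: verifying through the signature table of $\mathrm{Re}\,i\theta$ that on every sector carrying a prescribed exponential the quantity $\mathrm{Re}(\pm2it\theta)$ is indeed negative with the quadratic lower bound $-k_0|u|v$, after which the two model integrals are the routine $\overline{\partial}$ estimates of \cite{fNLS}.
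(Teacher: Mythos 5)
Your argument is correct and is precisely the standard $\overline{\partial}$-area-integral estimate of Borghese--Jenkins--McLaughlin \cite{fNLS}: reduce $|M_1^{(3)}|$ to $\iint_{\mathbb{C}}|\overline{\partial}R^{(2)}|\,dA$ via the uniform boundedness of $M^{(3)}$ and of $M^{(2)}_{RHP}$ on the support of $\overline{\partial}R^{(2)}$, then treat the $|\overline{\partial}\mathcal{X}_{\mathscr{K}}|$, $|k\mp k_0|^{-1/2}$ and $|(p_j^{\pm})'|$ contributions sector by sector with the exponential bound $|e^{\pm 2it\theta}|\lesssim e^{-ck_0t|u|v}$, the rescaling $w=\sqrt{ck_0t}\,u$, and Cauchy--Schwarz, each piece giving $\mathcal{O}\bigl((k_0t)^{-3/4}\bigr)$. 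The paper offers no proof of Proposition \ref{pro12} beyond a remark deferring to \cite{fNLS}, and your write-up supplies exactly the omitted details along the same route.
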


\begin{remark}
    The Propositions \ref{pro11} and \ref{pro12}  can be shown in  a  similar  way  in  reference  \cite{fNLS}.
\end{remark}

\subsection{Long time asymptotic behaviors   in Region  \uppercase\expandafter{\romannumeral1}}

\begin{theorem}
       Let $\sigma_{d}= \left\{    (k_j,  c_j),    k_j \in \mathscr{K}     \right\}_{j=1}^{2N}$  denote
     the scattering data generated by  initial value  $u_0(x) \in  \mathcal{S} (\mathbb{R})$.  For fixed $v_2 \leq v_1  \in \mathbb{R}^{-}$,   define     $\mathcal{I} =[ - v_1 /4 ,  - v_2  /4 ]$
     and a space-time cone $\mathcal{C}(v_1,v_2)$ for variables $ x$ and $t$.    Let $u_{sol}(x,t, \sigma_{d}(\mathcal{I}))$ be  the $N(\mathcal{I})$ solution
      corresponding  to the modified
     scattering data  $ \sigma_{d}(\mathcal{I})=\left\{  (k_j,c_j(\mathcal{I})),  k_j \in  \mathscr{K}(\mathcal{I}) \right\}$.
       Then as $t \to   + \infty$ with  $(x,t) \in  \mathcal{C}(v_1,v_2)$, we have
     \begin{equation}
        u(x,t)= u_{sol}(x,t| \sigma_{d}(\mathcal{I}))  +  t^{-1/2} h + \mathcal{O}(t^{-3/4}),
     \end{equation}
     where
     \begin{equation}
       h=\frac{1}{\sqrt{48  k_0}}    \left( M^{(out)} (k_0) M_{1}^{A_0}(k_0)  M^{(out)-1} (k_0)  +  M^{(out)} (-k_0) M_{1}^{B_0}(-k_0)  M^{(out)-1} (-k_0) \right)_{13}.\nonumber
     \end{equation}
\end{theorem}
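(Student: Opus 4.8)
The plan is to read off the asymptotics directly from the reconstruction formula (\ref{reconstruct}) by unwinding the whole chain of transformations and collecting the coefficient of $1/k$ as $k\to\infty$. Recall that $M = M^{(1)}T^{-1} = M^{(2)}(R^{(2)})^{-1}T^{-1} = M^{(3)}\,M^{(2)}_{RHP}\,(R^{(2)})^{-1}T^{-1}$, and that $M^{(2)}_{RHP} = Er\cdot M^{(out)}$ outside the discs $\mathcal{U}_{\pm k_0}$. Approaching infinity through $\Omega_5\cup\Omega_6$ (where $R^{(2)}=I$), and noting that $T(k)=\mathrm{diag}(\delta^{-1},\det\delta)=I+\mathcal{O}(1/k)$ carries a \emph{block-diagonal} $1/k$ correction that cannot reach the $(1,3)$ entry, the $1/k$ coefficient $M_1$ of $M$ splits additively in that entry:
\[
(M_1)_{13} = \big(M_1^{(out)}\big)_{13} + \big(Er_1\big)_{13} + \big(M_1^{(3)}\big)_{13} + \mathcal{O}(t^{-1}),
\]
all cross terms in the product $M^{(3)}\,Er\,M^{(out)}$ being products of two $\mathcal{O}(1/k)$ factors, hence negligible. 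Thus $u(x,t)=2i(M_1)_{13}$ decomposes into three clearly separated pieces.

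First I would treat the leading soliton term. By the proposition relating RHP7 to RHP6 (together with Liouville uniqueness) one has $q_{sol}(x,t|\sigma_d^{(out)}) = q_{sol}(x,t|\sigma_d) = 2i(M_1^{(out)})_{12}$, so that the $\delta$-conjugation is already absorbed into $\sigma_d^{(out)}$; Corollary (\ref{qsolI}) then collapses the full discrete data onto the band data with exponentially small error, giving $2i(M_1^{(out)})_{13} = u_{sol}(x,t|\sigma_d(\mathcal{I})) + \mathcal{O}(e^{-8\mu t})$. Second I would insert the explicit expression for $Er_1(x,t)$ obtained by the residue theorem on $\partial\mathcal{U}_{\pm k_0}$; since $Er_1 = t^{-1/2}\,\tfrac{1}{\sqrt{48 k_0}}\big(M^{(out)}(k_0)M_1^{A_0}(k_0)M^{(out)-1}(k_0) + M^{(out)}(-k_0)M_1^{B_0}(-k_0)M^{(out)-1}(-k_0)\big) + \mathcal{O}(t^{-1}\log t)$, its $(1,3)$ component is precisely $t^{-1/2}h$ with the stated $h$, up to the internally generated error $\mathcal{O}(t^{-1}\log t)$.

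Third, Proposition \ref{pro12} bounds the pure $\overline{\partial}$ contribution by $|M_1^{(3)}|\lesssim (k_0 t)^{-3/4}$, so $2i(M_1^{(3)})_{13} = \mathcal{O}(t^{-3/4})$. Summing the three contributions and absorbing the exponentially small band-restriction error and the $\mathcal{O}(t^{-1}\log t)$ term (both dominated by $t^{-3/4}$) yields the claimed expansion $u(x,t)=u_{sol}(x,t|\sigma_d(\mathcal{I})) + t^{-1/2}h + \mathcal{O}(t^{-3/4})$.

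The step requiring the most care is the additive bookkeeping underlying the displayed splitting: one must verify that neither the conjugation $T$ nor the $\overline{\partial}$-extension $R^{(2)}$ leaks into the $(1,3)$ entry of the $1/k$ coefficient (guaranteed respectively by the block-diagonal structure of $T$ and the vanishing of $R^{(2)}$ away from $\pm k_0$ at infinity), and that every cross term in $M^{(3)}\,Er\,M^{(out)}$ is genuinely of higher order. Once this is settled the remaining work is a comparison of orders $e^{-8\mu t}\ll t^{-1}\log t \ll t^{-3/4}$, which confirms that the $\overline{\partial}$-residual sets the overall error at $t^{-3/4}$ while the $Er_1$ contribution supplies the genuine second-order correction $t^{-1/2}h$.
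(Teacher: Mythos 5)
Your proposal follows essentially the same route as the paper: unwind the composition $M = M^{(3)}\,Er\,M^{(out)}\,R^{(2)-1}\,T^{-1}$, extract the additive $1/k$ coefficient $M_1 = M_1^{(out)} + Er_1 + M_1^{(3)} + T_1$, identify $2i(M_1^{(out)})$ with the soliton part via the corollary reducing $\sigma_d^{(out)}$ to $\sigma_d(\mathcal{I})$, read off $t^{-1/2}h$ from the explicit $Er_1$, and bound the $\overline{\partial}$ remainder by $t^{-3/4}$. Your extra observation that the block-diagonal $T_1$ and the vanishing of $R^{(2)}$ near infinity cannot pollute the off-diagonal entry is a point the paper leaves implicit, but it is the same argument.
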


\begin{proof}  Recalling  a series of transformations (\ref{316}), (\ref{323}), (\ref{340}) and (\ref{342}),  we obtain
      \begin{equation}
          M =    M^{(3)}   Er   M^{(out)} R^{(2)-1}  T^{-1}.\nonumber
      \end{equation}
      Particularly taking  $k\rightarrow \infty$  along the imaginary axis,  then we have 
      \begin{equation}
         M = \left( I + \frac{M^{(3)}_{1}}{k}+ \dots   \right)   \left( I + \frac{Er_{1}}{k}+ \dots   \right)  \left( I + \frac{M^{(out)}_{1}}{k}+ \dots   \right)   \left( I +   \frac{T_{1}}{k}   + \dots   \right),\nonumber
      \end{equation}
      which leads to
      \begin{equation}
         M_{1}=M_{1}^{(out)} + Er_1 + M^{(3)}_{1} + T_{1}.
      \end{equation}
      According to the reconstruct formula (\ref{reconstruct}) and the Proposition 11, one can get
      \begin{equation}\label{final}
        q(x,t) = 2i \left( M_{1}^{(out)} \right)_{12} + 2i \left(   Er_1    \right)_{12}  +  \mathcal{O} (t^{-3/4}).
      \end{equation}
      Note that
      \begin{equation}\label{Mout}
      2i \left( M_{1}^{(out)} \right)_{12} = q_{sol}(x,t| \sigma_{d}^{(out)}),
      \end{equation}
     which combine with Proposition 9 gives 
      \begin{equation}\label{Er12}
           (Er_{1})_{12} = t^{-1/2} h + \mathcal{O}(t^{-1} \log t),
      \end{equation}
      where
      \begin{equation}
           \begin{aligned}
           &  h=\frac{1}{\sqrt{48  k_0}}    \left( M^{(out)} (k_0) M_{1}^{A_0}(k_0)  M^{(out)-1} (k_0)  +  M^{(out)} (-k_0) M_{1}^{B_0}(-k_0)  M^{(out)-1} (-k_0) \right)_{12}.\nonumber
           \end{aligned}
      \end{equation}
      Substitute   (\ref{Mout}) and   (\ref{Er12}) into   (\ref{final}), we can get
      \begin{equation}
          q(x,t)=q_{sol}(x,t| \sigma_{d}^{(out)}) + t^{-1/2} h  + \mathcal{O} (t^{-3/4}).
      \end{equation}
     Again by using  (\ref{qsolI}), we  obtain the final asymptotic expression  with $(x,t) \in \mathcal{C}(v_1,v_2)$
      \begin{equation}\label{largetimeasymptotic}
          q(x,t)=q_{sol}(x,t| \sigma_{d}(\mathcal{I})) + t^{-1/2} h  + \mathcal{O} (t^{-3/4}).
      \end{equation}
\end{proof}

\begin{remark}
   The Large time asymptotic formula (\ref{largetimeasymptotic}) indicates that   the main contribution to   the soliton resolution of the  Cauchy initial value problem  of  the Sasa-Satsuma equation in Region  \uppercase\expandafter{\romannumeral1}  comes from three parts:  $1)$ Leading term   $ q_{sol}(x,t| \sigma_{d}(\mathcal{I}))$ corresponds to $N(\mathcal{I})$-soliton whose parameters are  controlled by a sum of  localized  soliton-soliton interactions  as one moves  through the once; $2)$ $t^{-1/2} h$  comes from soliton-radiation  interactions on  continuous spectrum; $3)$ The final term $\mathcal{O}(t^{-3/4})$  is derived from the error estimate of the pure $\overline{\partial}$-problem.  The final results have also helped prove an important conclusion that soliton solutions of Sasa-Satsuma equation  are asymptotically stable.
\end{remark}

\section{Long time asymptotics   in region \uppercase\expandafter{\romannumeral2}: $x>0, |x/t|=\mathcal{O}(1)$}

In the region  $x>0$, $|x/t|=\mathcal{O}(1)$, we have  the stationary points lie on the imaginary axis, i.e.
\begin{equation}
    \pm k_0 = \pm \sqrt{-\frac{x}{12t}}= \pm i \sqrt{\frac{x}{12t}},
\end{equation}
which have  a fixed distance from the real axis. The signature  table of the phase function is as shown in the Figure \ref{figure1}.

\begin{figure}[H]
        \begin{center}
\begin{tikzpicture}
\node[anchor=south west,inner sep=0] (image) at (0,0)
 {\includegraphics[width=9cm,height=7cm]{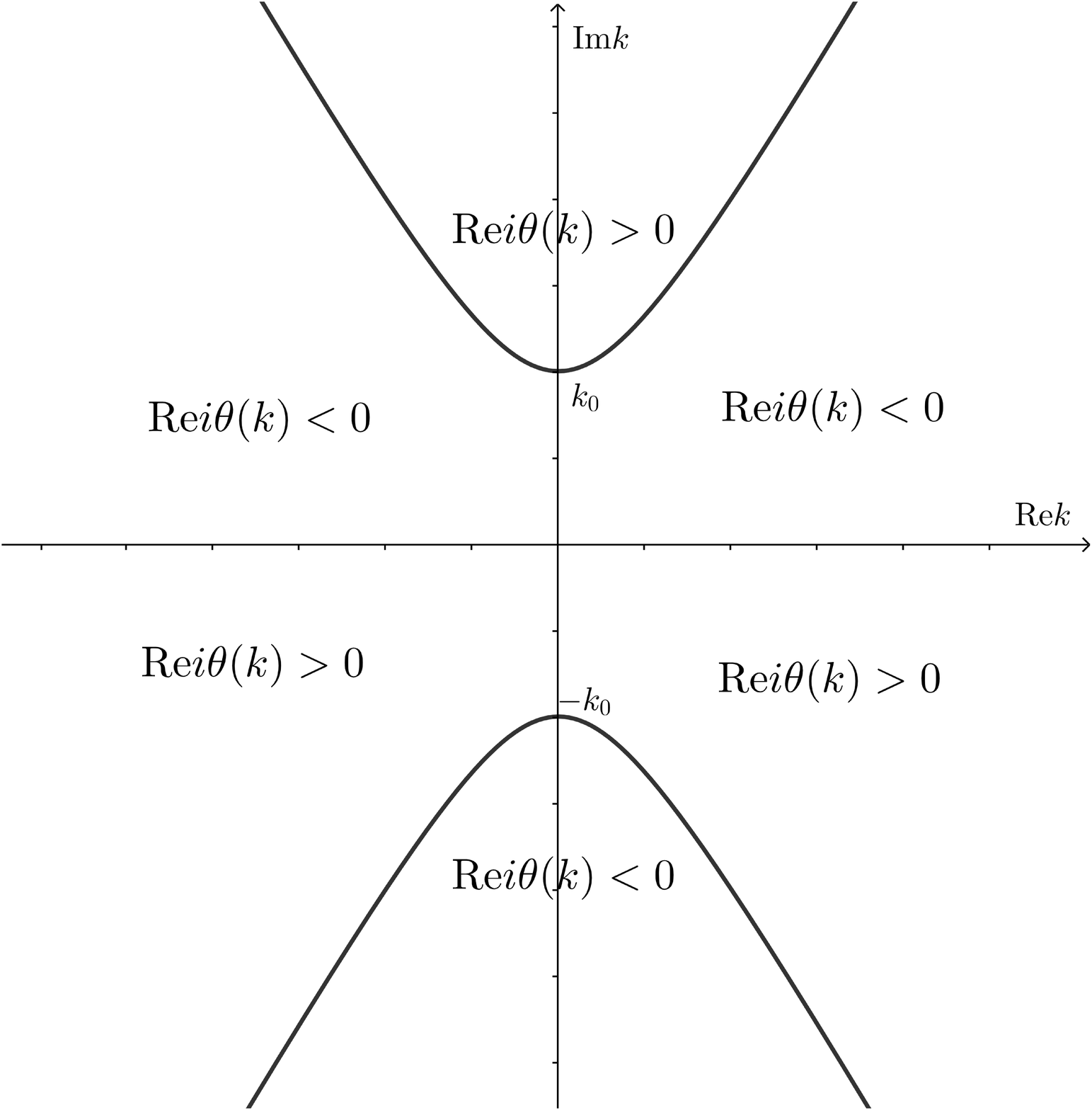}};
    \begin{scope}[x={(image.south east)},y={(image.north west)}]
    \end{scope}
\end{tikzpicture}
      \end{center}
       \caption{\footnotesize  The signature table of $\mathrm{Re}i \theta(k)$.}
       \label{figure1}
       \end{figure}

\subsection{A mixed $\overline{\partial}$-RH problem}
Next, in order to  make continuous extension  to the jump matrix $V^{(1)}$,  we introduce new contours defined as follows:
\begin{equation}
   \begin{aligned}
      \Sigma_{1}^{(1)}  &  = \left( i h + e^{\frac{\pi}{4}} \mathbb{R}_{+}  \right) \cup  \left( ih  + e^{\frac{3 \pi}{4}} \mathbb{R}_{+}  \right), \\
      \Sigma_{2}^{(1)} & = \mathbb{R}, \\
      \Sigma_{3}^{(1)}  & = \left( - i h + e^{\frac{- \pi}{4}} \mathbb{R}_{+}  \right) \cup  \left( - ih  + e^{- \frac{3 \pi}{4}} \mathbb{R}_{+}  \right),
   \end{aligned}
\end{equation}
where $h>0$ such that $12 k_0^2 + 12 h^2 = - c <0 $.  Therefore, the complex plane $\mathbb{C}$ is divided into four open  domains. Naturally, we apply  $\overline{\partial}$ steepest descent method to extend the scattering data into eight regions  so that the matrix function has no jumps on $\mathbb{R}$.  From the top to the bottom,  these open regions  are denoted as $\Omega_{1}$, $\Omega_{2}$, $\Omega_{3}$  and $\Omega_{4}$, where $\Omega_{2}$ and $\Omega_{3}$ can be divided into three parts $\Omega_{k,1}$, $\Omega_{k,2}$ and $\Omega_{k,3}$, which are shown in Fig. \ref{figure.8.}.

\begin{figure}[H]
        \begin{center}
\begin{tikzpicture}
\node[anchor=south west,inner sep=0] (image) at (0,0)
 {\includegraphics[width=10cm,height=7cm]{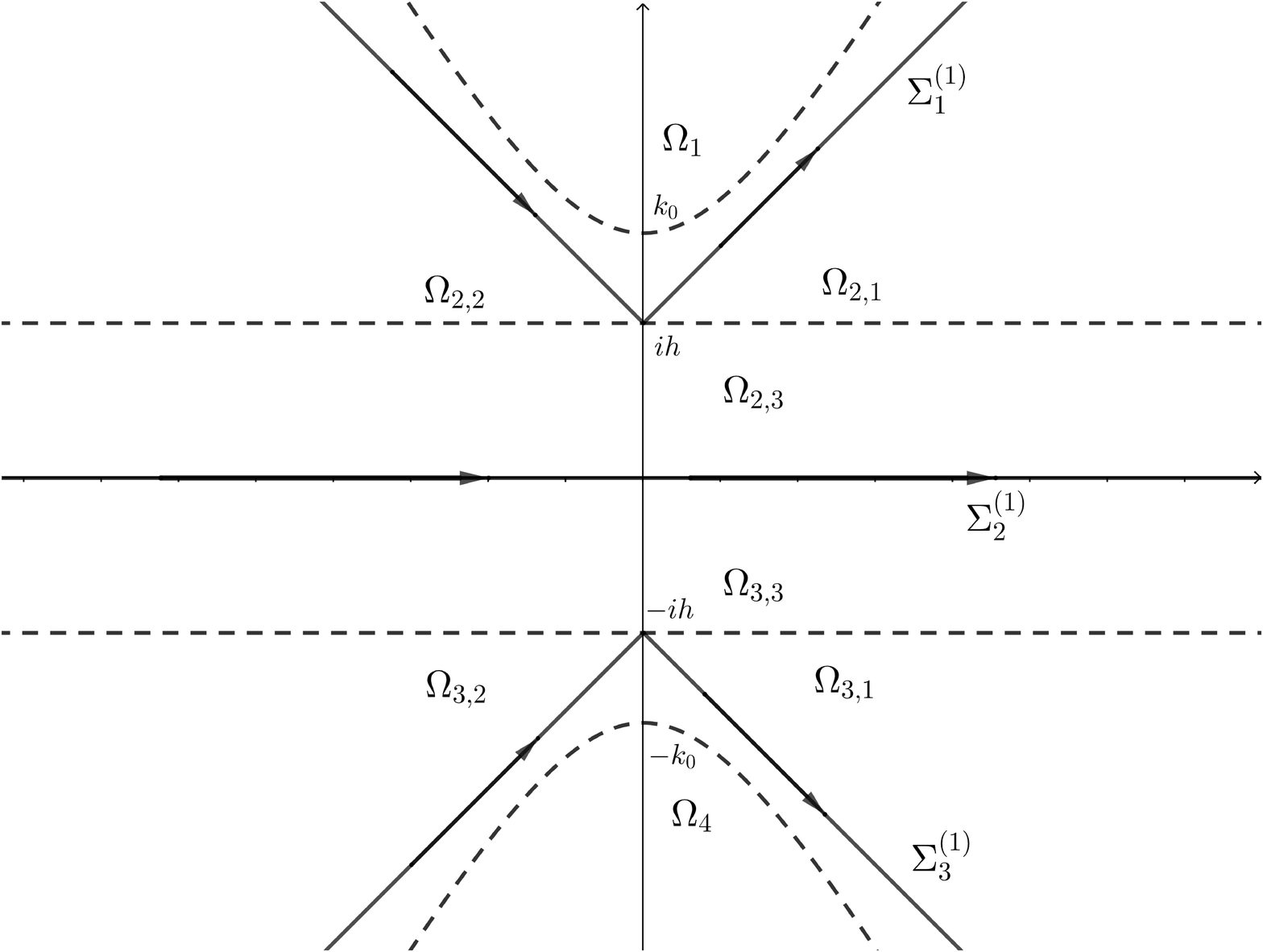}};
    \begin{scope}[x={(image.south east)},y={(image.north west)}]
    \end{scope}
\end{tikzpicture}
      \end{center}
       \caption{\footnotesize   Deformation from  $\mathbb{R}$ to new contour $\Sigma^{(1)}$.}
       \label{figure.8.}
       \end{figure}

\begin{proposition}
    There exists functions $R_j:\overline{\Omega}_{j} \to \mathbb{C}$ satisfying the following boundary conditions
        \begin{align}
       &R_1(k)=\Bigg\{\begin{array}{ll}
       -\gamma(k)   , &  k \in  \mathbb{R},\\
       -\gamma(0)   (1- \mathcal{X}_{\mathscr{K}}(k)),  &k \in \Sigma_1^{(1)},\\
       \end{array}
       \end{align}
       \begin{align}
       &R_2(k)=\Bigg\{\begin{array}{ll}
       \gamma^{\dag}(k^{*})   , &  k \in  \mathbb{R},\\
       \gamma^{\dag}(0)    (1- \mathcal{X}_{\mathscr{K}}(k)),  &k \in \Sigma_3^{(1)},\\
       \end{array}
       \end{align}
       and $R_1$, $R_2$ satisfy the following estimate
       \begin{equation}
                  |\overline{\partial} R_j(k)| \lesssim  |\gamma^{'}(\mathrm{Re}k)| + |k|^{-1/2} + \overline{\partial} \mathcal{X}_{\mathscr{K}}(k),
       \end{equation}
       and
       \begin{align}
             \bar{\partial}R_j(k)=0,\hspace{0.5cm}\text{if } k\in \Omega_1\cup\Omega_4\; \text{or} \;\text{\rm{dist}}(k,\mathscr{K}\cup \overline{\mathscr{K}})<\rho/3.
       \end{align}
\end{proposition}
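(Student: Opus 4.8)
The plan is to construct $R_1$ and $R_2$ as continuous interpolations between their prescribed boundary data on $\mathbb{R}$ and on the deformed rays, following the extension construction of the $\overline{\partial}$-steepest descent method used for the focusing NLS equation in \cite{fNLS}. Since the stationary points $\pm k_0$ now lie on the imaginary axis, their real part vanishes, and this is exactly why the constant contour datum is the value $\gamma(0)$ at the origin rather than a value at a real phase point. The whole construction is compatible with the symmetry $k \mapsto k^{*}$ together with Hermitian conjugation, so I would first build $R_1$ on the upper sector $\Omega_2$ lying between $\mathbb{R}$ and $\Sigma_1^{(1)}$, and then obtain $R_2$ on the lower sector $\Omega_3$ between $\mathbb{R}$ and $\Sigma_3^{(1)}$ by transplanting the same interpolation built from $\gamma^{\dag}$ and the contour value $\gamma^{\dag}(0)$.

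In the upper sector I would introduce a smooth angular cutoff $\mathcal{K}$ that equals $1$ along the real-axis boundary and $0$ along $\Sigma_1^{(1)}$, and set
\[
   R_1(k) = -\Bigl[\, \gamma(\mathrm{Re}\,k)\,\mathcal{K}(k) + \gamma(0)\bigl(1-\mathcal{K}(k)\bigr)\,\Bigr]\bigl(1-\mathcal{X}_{\mathscr{K}}(k)\bigr),
\]
extended by $R_1\equiv 0$ on $\Omega_1$; the subsectors $\Omega_{2,1},\Omega_{2,2},\Omega_{2,3}$ only serve to localize this formula near the origin and near the vertex of the wedge. On $\mathbb{R}$ one has $\mathcal{K}=1$ and $\mathcal{X}_{\mathscr{K}}=0$ (the discrete spectrum sits at distance $\ge\rho$ from the axis), so $R_1=-\gamma(k)$; along $\Sigma_1^{(1)}$ one has $\mathcal{K}=0$, so $R_1=-\gamma(0)\bigl(1-\mathcal{X}_{\mathscr{K}}\bigr)$; both reproduce the stated boundary values. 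The sign-adjusted version built from $\gamma^{\dag}$ gives $R_2$ with its two boundary conditions.

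Next I would differentiate, writing $\overline{\partial}=\tfrac12\bigl(\partial_u+i\,\partial_v\bigr)$ with $k=u+iv$, and splitting $\overline{\partial}R_1$ into the three contributions that match the three terms of the claimed bound: differentiating $\gamma(\mathrm{Re}\,k)$ produces a term controlled by $|\gamma'(\mathrm{Re}\,k)|$; differentiating the angular cutoff $\mathcal{K}$ produces a factor behaving like the reciprocal distance to the wedge vertex times the difference $\gamma(\mathrm{Re}\,k)-\gamma(0)$, which by the Cauchy--Schwarz (H\"older-$1/2$) estimate $|\gamma(\mathrm{Re}\,k)-\gamma(0)|\lesssim|\mathrm{Re}\,k|^{1/2}$ is controlled by $|k|^{-1/2}$; and differentiating the cutoff $1-\mathcal{X}_{\mathscr{K}}$ produces the term $|\overline{\partial}\mathcal{X}_{\mathscr{K}}(k)|$. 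Collecting these gives
\[
   |\overline{\partial}R_j(k)| \lesssim |\gamma'(\mathrm{Re}\,k)| + |k|^{-1/2} + \overline{\partial}\mathcal{X}_{\mathscr{K}}(k).
\]
Finally, $R_j\equiv 0$ on $\Omega_1\cup\Omega_4$ and the factor $1-\mathcal{X}_{\mathscr{K}}$ vanishes identically wherever $\mathrm{dist}(k,\mathscr{K}\cup\overline{\mathscr{K}})<\rho/3$, so $\overline{\partial}R_j=0$ on both sets, as required.

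The step I expect to be the main obstacle is this angular-derivative estimate: one must establish the uniform bound $|k|^{-1/2}$ across the whole sector, reconciling the behavior near the wedge vertex and the origin (where the reciprocal-distance factor is large but $\gamma(\mathrm{Re}\,k)-\gamma(0)$ is small) with the large-$|k|$ behavior (where the Schwartz decay of $\gamma$ takes over); this is precisely where the H\"older-$1/2$ continuity of the reflection coefficient is used, exactly as in \cite{fNLS}. Once this bound is in place, the boundary-matching identities and the symmetry transplantation to $R_2$ are routine.
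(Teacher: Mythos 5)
Your construction is correct and is essentially the argument the paper itself relies on: the paper states this proposition without proof, deferring (as in its Region I analogue) to the standard extension lemma of \cite{fNLS}, and your interpolation $-[\gamma(\mathrm{Re}\,k)\mathcal{K}+\gamma(0)(1-\mathcal{K})](1-\mathcal{X}_{\mathscr{K}})$ with the three-term splitting of $\overline{\partial}R_j$ and the Cauchy--Schwarz bound $|\gamma(\mathrm{Re}\,k)-\gamma(0)|\lesssim\|\gamma'\|_{L^2}|\mathrm{Re}\,k|^{1/2}$ is exactly that construction adapted to the present geometry. The boundary matching (using $\mathrm{dist}(\mathscr{K},\mathbb{R})\ge\rho$ so that $\mathcal{X}_{\mathscr{K}}=0$ on $\mathbb{R}$) and the vanishing of $\overline{\partial}R_j$ on $\Omega_1\cup\Omega_4$ and near the discrete spectrum are handled correctly.
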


Based on the above analysis, we define $R^{(1)}$ as follows:
\begin{equation}
    R^{(1)}(k)= \left\{   \begin{aligned}   &   \begin{pmatrix}  I & 0 \\ R_j e^{2it \theta(k)} &  1 \end{pmatrix},  &    k \in \Omega_2, \\
     & \begin{pmatrix}  I &  R_j e^{-2it \theta(k)} \\ 0 &  1 \end{pmatrix}, &    k  \in \Omega_3, \\
       &\begin{pmatrix}  I &  0 \\ 0 &  1 \end{pmatrix},    \qquad  k  \in \Omega_1,\Omega_4,
      \end{aligned}   \right.
\end{equation}

In  order  to deform  the contour $\mathbb{R}$ to the contour $\Sigma^{(2)}=\Sigma_{1}^{(1)} \cup \Sigma_{3}^{(1)}$, we make the following matrix transformation:
\begin{equation}
    M^{(1)}(k)= M (k) R^{(1)}(k),
\end{equation}
then we can transform the RH problem  on $ \mathbb{R}$ into that on $\Sigma^{(2)}$.\\

 \noindent\textbf{$\overline{\partial}$-RHP2}.  Find a matrix-valued function $M^{(1)}(k)=M^{(1)}(k;x,t)$ which satisfies
   \begin{itemize}
       \item[(a)] $ M^{(1)}(k)$ is continuous in $\mathbb{C}\setminus  \left( \Sigma^{(2)}\cup \mathscr{K} \cup \overline{\mathscr{K}} \right)$.
       \item[(b)] $M^{(1)}(k)$ has the following jump condition $M^{(1)}_+(k)=M^{(1)}_-(k)V^{(1)}(k), \hspace{0.5cm}k \in \Sigma^{(2)}$,
       where
       \begin{equation}
          V^{(1)}= \left\{  \begin{aligned}
             \begin{pmatrix}    I & 0 \\ -R_1 e^{2it \theta} & 1 \end{pmatrix},   \quad  k \in  \Sigma_1^{(1)} ,   \\
            \begin{pmatrix}    I & R_2  e^{-2it \theta} \\ 0& 1 \end{pmatrix},   \quad  k \in  \Sigma_3^{(1)}, \\
            \end{aligned}
             \right.
       \end{equation}
      \item[(c)] $M^{(1)}\to  I$, \quad $ k \to \infty$;
      \item[(d)] For any $k \in \mathbb{C} \setminus \left(   \Sigma^{(2)} \bigcup \mathscr{K} \bigcup \mathscr{\overline{K}}        \right)$, we have
          \begin{equation}
             \overline{\partial}M^{(1)}(k)=M (k) \overline{\partial}R^{(1)}(k),
          \end{equation}
       where
       \begin{equation}
          \overline{\partial}R^{(1)}(k) =  \left\{
              \begin{aligned}
                   &\begin{pmatrix}     0 &  0 \\ \overline{\partial}R_1 e^{2it \theta} & 0        \end{pmatrix}, \quad  k \in \Omega_{2},  \\
                  & \begin{pmatrix}     0 & \overline{\partial}R_2 e^{-2it \theta} \\  0 & 0        \end{pmatrix}, \quad  k \in \Omega_{3},  \\
                  &  \begin{pmatrix}     0 & 0 \\  0 & 0        \end{pmatrix}, \quad  k \in \Omega_{1} \cup \Omega_{4},
              \end{aligned}
             \right.
       \end{equation}
     \item[(e)] $M^{(1)}(k;x,t)$ has simple poles at $k_j$  and $k_j^{*}$ with
                          \begin{equation}
                 \begin{aligned}
             & \res_{k=k_j} M^{(1)}(k)  =
            \lim_{k\to k_j}M^{(1)}(k)  \begin{pmatrix}  0  &  0  \\   c_{j}  e^{2it \theta}   & 0      \end{pmatrix}, &  j \in   \Delta^{+},      \\
             & \res_{k=k_j^{*}} M^{(1)}(k)  =
             \lim_{k\to k_j^{*}}M^{(1)}(k) \begin{pmatrix}  0  &   -    c_{j}^{\dag} e^{-2it \theta(k)}  \\  0  & 0      \end{pmatrix},  &  j \in \Delta^{+}.
                 \end{aligned}
          \end{equation}
  \end{itemize}

\subsection{Solution of the mixed  $\overline{\partial}$-RH problem}
Similar to Region \uppercase\expandafter{\romannumeral1}, we will decompose $\overline{\partial}$-RHP2 into a pure RHP with  $\overline{\partial} R^{(1)} = 0$  and a pure $\overline{\partial}$-problem with $\overline{\partial} R^{(1)} \neq 0$. We express the decomposition as follows:
\begin{equation}
   M^{(1)}(k;x,t) = \left\{       \begin{aligned}
                   & \overline{\partial} R^{(1)} = 0      \to  M^{(1)}_{RHP}, \\
                   & \overline{\partial} R^{(1)} \neq 0     \to  M^{(2)}= M^{(1)} M^{(1)-1}_{RHP},
                    \end{aligned}      \right.
\end{equation}
here $M^{(1)}_{RHP}$  corresponds to the pure  RHP part which has the same poles and  residue condition with $M^{(1)}(k)$,
and $M^{(2)}$  corresponds to  the pure  $\overline{\partial}$ part without jumps and poles.

Firstly, we first consider the solution of pure RH problem for $M^{(1)}_{RHP}$.\\

   \noindent\textbf{RHP10}.  Find a matrix-valued function $M^{(1)}_{RHP}(k)=M^{(1)}_{RHP}(k;x,t)$ which satisfies
   \begin{itemize}
       \item[(a)] $M^{(1)}_{RHP}$ is analytical  in $\mathbb{C}\setminus  \left( \Sigma^{(2)}\cup \mathscr{K} \cup \overline{\mathscr{K}} \right)$;
       \item[(b)] $M^{(1)}_{RHP}\to  I$, \quad $ k \to \infty$;
       \item[(c)] $M^{(1)}_{RHP}$ has the following jump condition $M^{(1)}_{RHP+}(k)=M^{(1)}_{RHP-}(k)V^{(2)}(k), \hspace{0.5cm}k \in \Sigma^{(2)}$,
       where
       \begin{equation}
          V^{(2)}= \left\{  \begin{aligned}
             \begin{pmatrix}    I & 0 \\ -R_1 e^{2it \theta} & 1 \end{pmatrix},   \quad  k \in  \Sigma_1^{(1)} ,   \\
            \begin{pmatrix}    I & R_2  e^{-2it \theta} \\ 0& 1 \end{pmatrix},   \quad  k \in  \Sigma_3^{(1)}, \\
            \end{aligned}
             \right.
       \end{equation}
     \item[(e)] $M^{(1)}_{RHP}(k)$ has simple poles at $k_j$  and $k_j^{*}$ with
                          \begin{equation}
                 \begin{aligned}
            & \res_{k=k_j} M^{(1)}_{RHP}  =
            \lim_{k\to k_j}M^{(1)}_{RHP}  \begin{pmatrix}  0  &  0  \\   c_{j}   e^{2it \theta}   & 0      \end{pmatrix}, &  j \in   \Delta^{+},      \\
             & \res_{k=k_j^{*}} M^{(1)}_{RHP}  =
             \lim_{k\to k_j^{*}}M^{(1)}_{RHP} \begin{pmatrix}  0  &   -      c_{j}^{\dag} e^{-2it \theta(k)}  \\  0  & 0      \end{pmatrix},  &  j \in \Delta^{+}.
                 \end{aligned}
          \end{equation}
  \end{itemize}

Note that as $k \in \Sigma_{1}^{(1)}$, we have
\begin{equation}
    \begin{aligned}
    Re(2it \theta(k)) & = 2t \left( 4(-3u^2(v+h) + (v+ h )^3 ) +12 k_0^2 (v+h) \right)       \\
                      & \leq  2t \left(  -8 u^2 v + (12 h^2 + 12 k_0^2) v   + (4 h^2 +12 k_0^2) h \right)  \\
                      & \leq  -2c h t,
    \end{aligned}
\end{equation}
where $k=u+i(v+h)$, $u=v \geq 0$  and $c,h$ are fixed positive real value.  Moreover, as $k \in \Sigma_{3}^{(1)}$, we have the same result. Therefore, as for $M^{(1)}_{RHP}$, the jump matrices across $\Sigma_{1}^{(1)}$ and $\Sigma_{3}^{(1)}$ enjoy the property of exponential decay as $t \to + \infty$.  Furthermore, with a small exponential error, we can approximate  the RH problem $M^{(1)}_{RHP}$   as the RH problem  $\widetilde{M}^{(1)}_{RHP}$  which only has simple poles without jump conditions.  \\

   \noindent\textbf{RHP11}.  Find a matrix-valued function $\widetilde{M}^{(1)}_{RHP}$ which satisfies
   \begin{itemize}
       \item[(a)] $\widetilde{M}^{(1)}_{RHP}$ is analytical  in $\mathbb{C}\setminus  \left(  \mathscr{K} \cup \overline{\mathscr{K}} \right)$;
       \item[(b)] $\widetilde{M}^{(1)}_{RHP} \to  I$, \quad $ k \to \infty$;
     \item[(c)] $\widetilde{M}^{(1)}_{RHP}(k)$ has simple poles at $k_j$  and $k_j^{*}$ with
                          \begin{equation}
                 \begin{aligned}
             & \res_{k=k_j} \widetilde{M}^{(1)}_{RHP}  =
            \lim_{k\to k_j}\widetilde{M}^{(1)}_{RHP}  \begin{pmatrix}  0  &  0  \\   c_{j}   e^{2it \theta}   & 0      \end{pmatrix}, &  j \in   \Delta^{+},      \\
             & \res_{k=k_j^{*}} \widetilde{M}^{(1)}_{RHP}  =
             \lim_{k\to k_j^{*}}\widetilde{M}^{(1)}_{RHP}\begin{pmatrix}  0  &   -      c_{j}^{\dag} e^{-2it \theta(k)}  \\  0  & 0      \end{pmatrix},  &  j \in \Delta^{+}.
                 \end{aligned}
          \end{equation}
  \end{itemize}

For the RH problem 10, we will adopt the method appearing in the analysis of  asymptotic property of soliton solutions in Region \uppercase\expandafter{\romannumeral1}.   Similarly, we define a space-time cone
\begin{equation}
     \mathcal{C}\left(   v_1,   v_2   \right)  = \left\{  (x,t) \in \mathbb{R}^2 | x= v t,    v \in [v_2,v_1]  \right\},
\end{equation}
where $    0 < v_2 \leq  v_1 $.  Denote
\begin{equation}
    \begin{aligned}
 & \mathcal{I} =[-\frac{v_1}{4},-\frac{v_2}{4}], \quad    \mathscr{K}(\mathcal{I}) = \left\{ k_j \in  \mathscr{K} |    -\frac{v_1}{4}  \leq     3 \mathrm{Re}^2 k_j - \mathrm{Im}^2 k_j      \leq     -\frac{v_2}{4}              \right\},   \\
    &  N(\mathcal{I})= |\mathscr{K}(\mathcal{I})|,   \quad  \mathscr{K}^{-}(\mathcal{I})   = \left\{ k_j \in  \mathscr{K} |    3 \mathrm{Re}^2 k_j - \mathrm{Im}^2 k_j      <    -\frac{v_1}{4}              \right\},  \\
    & \mathscr{K}^{+}(\mathcal{I})   = \left\{ k_j \in  \mathscr{K} |    3 \mathrm{Re}^2 k_j - \mathrm{Im}^2 k_j     >    -\frac{v_2}{4}              \right\},         \\
    & c_{j}(\mathcal{I}) = c_{j}\delta^{-1}(k_j)   \exp\left[ - \frac{1}{2 \pi i }   \int_{-k_0}^{k_0}   \frac{\log(1+|\gamma(\zeta)|^2)}{\zeta -k_j } d \zeta \right],
    \end{aligned}
\end{equation}

\begin{figure}[H]
        \begin{center}
\begin{tikzpicture}
\node[anchor=south west,inner sep=0] (image) at (0,0)
 {\includegraphics[width=10cm,height=7cm]{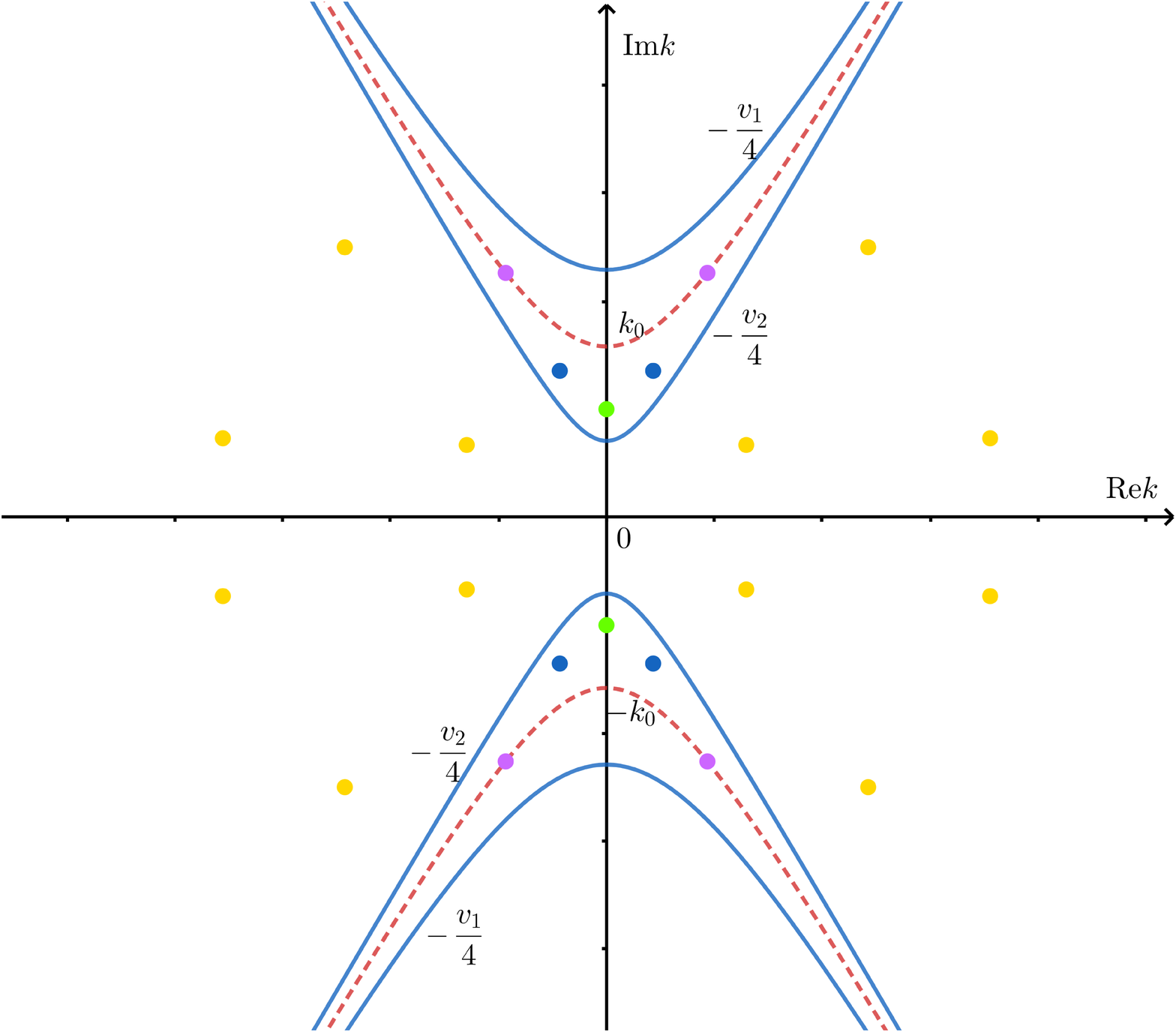}};
    \begin{scope}[x={(image.south east)},y={(image.north west)}]
    \end{scope}
\end{tikzpicture}
      \end{center}
       \caption{\footnotesize   The poles distribution diagram.  The points $(\textcolor{blue}{\bullet})(\textcolor{yellow}{\bullet})(\textcolor{purple}{\bullet})$  represent  breather solitons and the points $ (\textcolor{green}{\bullet})$  represent single solitons.  Moreover,  points $ (\textcolor{yellow}{\bullet})$ denote the poles outside the band $\mathcal{I}$  and points denotes  the poles $(\textcolor{blue}{\bullet})(\textcolor{green}{\bullet})(\textcolor{purple}{\bullet})$  within the band  $\mathcal{I}$. Besides, points $ (\textcolor{purple}{\bullet})$  represent the poles which lie on the critical line $\mathrm{Re} i \theta(k)=0$.}
       \end{figure}

\begin{proposition}
   Given  scattering data $\sigma_{d}= \left\{    (k_j,  c_j)     \right\}_{j=1}^{2N}$ and $\sigma_{d}(\mathcal{I})= \left\{    (k_j,  c_j(\mathcal{I})) |  k_j \in \mathscr{K}(\mathcal{I})    \right\}$. At $t \to   +\infty$  with $(x,t)  \in  \mathcal{C}\left(  v_1, v_2   \right)$, we have
   \begin{equation}
       M^{\Delta^{\pm}}(k;x,t|\sigma_{d}) =  \left(     I + \mathcal{O} \left(     e^{-8\mu t}    \right)    \right)               M^{\Delta_{\mathcal{I}}}(k;x,t|\sigma_{d}(\mathcal{I})),
   \end{equation}
   where $\mu (\mathcal{I}) = \min_{k_j \in \mathscr{K}/ \mathscr{K}(\mathcal{I})}  \left\{    \mathrm{Im}k_j \cdot \mathrm{dist} \left(    3 \mathrm{Re}^2 k_j - \mathrm{Im}^2 k_j,  \mathcal{I} \right)     \right\}$.
\end{proposition}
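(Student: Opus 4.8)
The plan is to follow the same pole-removal strategy used for the analogous soliton-reduction proposition in Region $\mathrm{I}$, since the two statements are structurally identical and only the underlying signature geometry changes. I would begin by splitting the discrete spectrum into the in-band poles $\mathscr{K}(\mathcal{I})$ and the out-of-band poles $\mathscr{K}\setminus\mathscr{K}(\mathcal{I})$, together with their conjugates. The crux is to show that the data attached to the out-of-band poles is exponentially negligible: for $k_j\in\mathscr{K}\setminus\mathscr{K}(\mathcal{I})$ the norming factor $c_j e^{2it\theta(k_j)}$ (and, at $k_j^{*}$, the factor $c_j^{\dag}e^{-2it\theta(k_j^{*})}$) is $\mathcal{O}(e^{-8\mu t})$, whereas for $k_j\in\mathscr{K}(\mathcal{I})$ these coefficients remain $\mathcal{O}(1)$. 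This reduces to a sign analysis of $\mathrm{Re}(2it\theta(k_j))=-2t\,\mathrm{Im}\,\theta(k_j)$, which—using $\theta(k)=4(k^3-3k_0^2 k)$—is controlled precisely by $\mathrm{Im}\,k_j\cdot\mathrm{dist}(3\mathrm{Re}^2 k_j-\mathrm{Im}^2 k_j,\mathcal{I})$, producing the decay rate $\mu(\mathcal{I})$.

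Next, I would introduce a meromorphic matrix $\Gamma(k)$ that equals $I$ minus the nilpotent residue contribution on a small disk $D_j$ about each out-of-band pole $k_j$ (and the conjugate nilpotent term on $\overline{D}_j$ about $k_j^{*}$), and equals $I$ away from these disks; the disks are chosen pairwise disjoint and disjoint from the in-band poles. Setting $\widetilde{M}(k)=M^{\Delta^{\pm}}(k)\,\Gamma(k)$ converts each out-of-band pole into a jump across $\partial D_j\cup\partial\overline{D}_j$, and by the estimate above the resulting jump matrix $\widetilde{V}$ satisfies $\|\widetilde{V}-I\|_{L^{\infty}}=\mathcal{O}(e^{-8\mu t})$.

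Finally, since $\widetilde{M}$ carries exactly the same poles and residue conditions as $M^{\Delta_{\mathcal{I}}}(\,\cdot\,|\sigma_d(\mathcal{I}))$, the quotient $\mathcal{E}(k)=\widetilde{M}(k)\,[M^{\Delta_{\mathcal{I}}}(k|\sigma_d(\mathcal{I}))]^{-1}$ is pole-free and solves a small-norm RHP whose only jumps live on the circles $\partial D_j\cup\partial\overline{D}_j$, with $\|V_{\mathcal{E}}-I\|_{L^{\infty}}=\mathcal{O}(e^{-8\mu t})$. The standard small-norm theory then gives existence and uniqueness of $\mathcal{E}$ together with $\mathcal{E}(k)=I+\mathcal{O}(e^{-8\mu t})$, and unwinding $\widetilde{M}=\mathcal{E}\,M^{\Delta_{\mathcal{I}}}\,\Gamma$ (with $\Gamma=I$ both near $k=\infty$ and outside the disks) yields the claimed asymptotic relation.

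The main obstacle I anticipate is the sign and geometry step. In Region $\mathrm{II}$ the stationary points $\pm k_0$ sit on the imaginary axis rather than the real axis, so the signature table of $\mathrm{Re}\,i\theta$ has a different shape (cf. Figure \ref{figure1}); I must verify carefully that the exponent $3\mathrm{Re}^2 k_j-\mathrm{Im}^2 k_j$ still sorts the poles correctly into decaying and non-decaying groups in both half-planes, and that its combination with $\mathrm{Im}\,k_j$ produces exactly the constant $8$ appearing in $e^{-8\mu t}$. Once this uniform exponential estimate is secured, the remaining steps are identical to the Region $\mathrm{I}$ argument and require only the small-norm machinery already invoked there.
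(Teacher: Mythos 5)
Your proposal is correct and follows essentially the same route as the paper: the paper states this Region $\mathrm{II}$ proposition without a separate proof, explicitly deferring to the Region $\mathrm{I}$ argument (its Proposition 5), and your pole-splitting, the interpolation matrix $\Gamma(k)$ converting out-of-band poles into exponentially small jumps on disks $\partial D_j\cup\partial\overline{D}_j$, and the concluding small-norm analysis for $\mathcal{E}(k)$ reproduce that argument faithfully. Your flagged concern about the sign analysis is handled exactly as you suspect, via $\mathrm{Re}(2it\theta(k_j))=-8t\,\mathrm{Im}k_j\left(\tfrac{x}{4t}+3\mathrm{Re}^2k_j-\mathrm{Im}^2k_j\right)$ together with the paper's standing assumption that no poles lie in $\Delta^{-}$.
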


\begin{corollary}
    Suppose that $q_{sol}$ is the soliton  solutions corresponding to the scattering data $\sigma_{d}= \left\{    (k_j, c_j)   \right\}$ of  Sasa-Satsuma equation, as $t \to +\infty$,
    \begin{equation}
            q_{sol}(x,t| \sigma_{d} ) =  q_{sol}(x,t | \sigma_{d} (\mathcal{I}))  +\mathcal{O} \left(  e^{-8 \mu t}  \right),
    \end{equation}
    where  $q_{sol}(x,t | \sigma_{d} (\mathcal{I}))$ is the soliton solution corresponding to the scattering data $\sigma_{d} (\mathcal{I})$ of  Sasa-Satsuma equation.
\end{corollary}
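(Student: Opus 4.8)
The plan is to deduce the corollary directly from the preceding Proposition together with the reconstruction formula~(\ref{reconstruct}), exactly as the analogous statement~(\ref{qsolI}) was obtained in Region~\uppercase\expandafter{\romannumeral1}. The Proposition supplies the factorization
\[
  M^{\Delta^{\pm}}(k;x,t|\sigma_d) = \mathcal{E}(k)\, M^{\Delta_{\mathcal{I}}}(k;x,t|\sigma_d(\mathcal{I})), \qquad \mathcal{E}(k) = I + \mathcal{O}\!\left(e^{-8\mu t}\right),
\]
where $\mathcal{E}(k)$ arises from converting the poles $k_j \in \mathscr{K}\setminus\mathscr{K}(\mathcal{I})$ lying outside the band $\mathcal{I}$ into jumps on small fixed circles $\partial D_j$, whose jump deviation satisfies $\|\widetilde V - I\|_{L^\infty} = \mathcal{O}(e^{-8\mu t})$, so that $\mathcal{E}(k)$ solves a pole-free small-norm RHP. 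Since $q_{sol}(x,t|\sigma_d)$ is reconstructed from $M^{\Delta^{\pm}}(k;x,t|\sigma_d)$ through~(\ref{reconstruct}), it suffices to track the effect of $\mathcal{E}(k)$ on the $k^{-1}$ coefficient at infinity.

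First I would note that $\mathcal{E}(k)$ is analytic near $k=\infty$ with $\mathcal{E}(k)\to I$, hence admits a Laurent expansion $\mathcal{E}(k) = I + \mathcal{E}_1 k^{-1} + \mathcal{O}(k^{-2})$; the small-norm estimate gives the coefficient $\mathcal{E}_1 = \mathcal{O}(e^{-8\mu t})$ uniformly, since $\mathcal{E}_1 = -\tfrac{1}{2\pi i}\oint (\mathcal{E}_+ - \mathcal{E}_-)\,d\xi$ is a contour integral over the fixed circles of an integrand of that order. Multiplying this expansion by $M^{\Delta_{\mathcal{I}}} = I + M_1^{\Delta_{\mathcal{I}}} k^{-1} + \cdots$ and collecting the $k^{-1}$ term yields
\[
  M_1 = M_1^{\Delta_{\mathcal{I}}} + \mathcal{E}_1 = M_1^{\Delta_{\mathcal{I}}} + \mathcal{O}\!\left(e^{-8\mu t}\right).
\]

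Finally I would apply the reconstruction formula~(\ref{reconstruct}) to both potentials. Extracting the $(1,2)$ entry and multiplying by $2i$ gives
\[
  q_{sol}(x,t|\sigma_d) = 2i\,(M_1)_{12} = 2i\,(M_1^{\Delta_{\mathcal{I}}})_{12} + \mathcal{O}\!\left(e^{-8\mu t}\right) = q_{sol}(x,t|\sigma_d(\mathcal{I})) + \mathcal{O}\!\left(e^{-8\mu t}\right),
\]
which is the claim. The only delicate point---the analogue of a ``main obstacle''---is confirming that the $\mathcal{O}(e^{-8\mu t})$ bound on $\mathcal{E}(k)$ transfers to the single Laurent coefficient $\mathcal{E}_1$ uniformly in $k$; this is guaranteed by the boundedness of $M^{\Delta_{\mathcal{I}}}$ and its inverse away from the poles together with the exponential jump estimate on the fixed contours $\partial D_j \cup \partial \overline{D}_j$. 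All remaining manipulations are the routine matrix bookkeeping already carried out for~(\ref{qsolI}) in Region~\uppercase\expandafter{\romannumeral1}; the situation here is in fact cleaner, because in Region~\uppercase\expandafter{\romannumeral2} the stationary points sit on the imaginary axis and the continuous-spectrum contributions decay exponentially rather than at the order $t^{-1/2}$ seen before.
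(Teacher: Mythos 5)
Your proposal is correct and follows exactly the route the paper intends: the corollary is an immediate consequence of the preceding proposition's factorization $M=\bigl(I+\mathcal{O}(e^{-8\mu t})\bigr)M^{\Delta_{\mathcal{I}}}$ together with the reconstruction formula (\ref{reconstruct}), obtained by comparing the $k^{-1}$ Laurent coefficients at infinity. The paper states the corollary without a separate proof, and your bookkeeping (including the uniform bound on $\mathcal{E}_1$ via the contour integral over the fixed circles $\partial D_j\cup\partial\overline{D}_j$) correctly supplies the details it leaves implicit.
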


Next, we consider the pure $\overline{\partial}$-problem for $M^{(2)}(k)$. Naturally, $M^{(2)}$ fulfills the following condition\\

 \noindent\textbf{$\overline{\partial}$-problem 2}.  Find a matrix-valued function $M^{(2)}(k)$  which satisfies
   \begin{itemize}
       \item[(a)]  $M^{(2)}(k)$ is continuous in $\mathbb{C}  \setminus ( \Sigma^{(2)} \cup \mathscr{K} \cup  \overline{\mathscr{{K}}})$;
      \item[(b)]  $M^{(2)}(k)$  $\sim$ I,  \quad $k \to \infty$;
      \item[(c)]  $\overline{\partial} M^{(2)}(k) =  M^{(2)}(k) W^{(2)}(k)$,  \quad $k \in \mathbb{C}$,  \quad where
             $ W^{(2)}= M^{(1)}_{RHP} (k)  \overline{\partial} R^{(1)}(k)  M^{(1)}_{RHP} (k)^{-1}$ .
   \end{itemize}

Similar to the analysis of $\overline{\partial}$-problem, according to the theory of the Cauchy operator, we can prove the existence of the solution of $M^{(2)}(k)$. Furthermore,  based on the estimate of $\overline{\partial}R_{j}$, we obtain the key estimate for  the coefficient to the negative first power in the expansion of $M^{(2)}(k)$ as $ k \to + \infty$.

\begin{proposition}
    For large time $t$,
    \begin{equation}
        ||S||_{L^{\infty} \to L^{\infty}} \lesssim  t ^{-1/2},
    \end{equation}
    thus, the operator $(I-S)^{-1}$ is invertible. Furthermore, the solution of pure $\overline{\partial}$-problem exists and is unique.
\end{proposition}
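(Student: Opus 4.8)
The plan is to follow the same route used for the analogous Region \uppercase\expandafter{\romannumeral1} statement (Proposition \ref{pro11}): recast the pure $\overline{\partial}$-problem as the integral equation $(I-S)M^{(2)}=I$ and control the operator norm of $S$ through a direct estimate of its integral kernel. First I would bound
\begin{equation}
\|S\|_{L^{\infty}\to L^{\infty}} \le \frac{1}{\pi}\sup_{k\in\mathbb{C}} \iint_{\mathbb{C}} \frac{|W^{(2)}(\xi)|}{|\xi-k|}\,dA(\xi),\nonumber
\end{equation}
which reduces everything to estimating a single scalar double integral. Using that $M^{(1)}_{RHP}$ and its inverse are uniformly bounded (this follows from RHP10--RHP11 together with the soliton-approximation proposition, exactly as in Region \uppercase\expandafter{\romannumeral1}), we have $|W^{(2)}|\lesssim |\overline{\partial}R^{(1)}|$, and the latter is supported in $\Omega_2\cup\Omega_3$ and carries the oscillatory factor $e^{\pm 2it\theta}$.

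Next I exploit the fact that, unlike Region \uppercase\expandafter{\romannumeral1}, the stationary points $\pm k_0=\pm i\sqrt{x/(12t)}$ sit on the imaginary axis, so the phase enjoys genuine exponential decay off $\mathbb{R}$. Writing $\xi=u+iv$, the signature computation already carried out for RHP10 gives $\mathrm{Re}(2it\theta(\xi))\le -c\,t\,v$ on $\Omega_2$, with the mirror estimate on $\Omega_3$, for a fixed $c>0$. The $\overline{\partial}\mathcal{X}_{\mathscr{K}}$ contribution is supported on a compact set bounded away from $\mathbb{R}$, where $e^{\pm 2it\theta}$ is exponentially small in $t$, hence negligible. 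By symmetry it then suffices to estimate, over $\Omega_2$,
\begin{equation}
\iint_{\Omega_2} \frac{\left(|\gamma'(\mathrm{Re}\,\xi)| + |\xi|^{-1/2}\right)e^{-ctv}}{|\xi-k|}\,du\,dv,\nonumber
\end{equation}
where I have inserted the bound for $|\overline{\partial}R_1|$ from the preceding proposition.

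The engine of the estimate is a Cauchy--Schwarz split in the $u$-variable for each fixed $v$. Since $u_0\in\mathcal{S}(\mathbb{R})$ gives $\|\gamma'\|_{L^2}\lesssim 1$, together with the standard bound $\bigl\||\xi-k|^{-1}\bigr\|_{L^2_u}\lesssim |v-\mathrm{Im}\,k|^{-1/2}$ and an $L^p$ control of $|\xi|^{-1/2}$, the inner integral is dominated by $|v-\mathrm{Im}\,k|^{-1/2}$. The remaining $v$-integral is then
\begin{equation}
\int_0^{\infty} e^{-ctv}\,|v-\mathrm{Im}\,k|^{-1/2}\,dv \lesssim t^{-1/2},\nonumber
\end{equation}
uniformly in $k$; the worst case $\mathrm{Im}\,k=0$ is handled by the substitution $w=tv$, which produces exactly the factor $t^{-1/2}$ times a convergent $\Gamma$-integral. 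Collecting the two pieces yields $\|S\|_{L^{\infty}\to L^{\infty}}\lesssim t^{-1/2}$, whence for $t$ large $\|S\|<1$, so $(I-S)^{-1}=\sum_{n\ge 0}S^{n}$ converges in $L^{\infty}$, giving existence and uniqueness of $M^{(2)}$.

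I expect the main obstacle to be twofold: first, verifying that the linear lower bound $\mathrm{Re}(2it\theta)\le -ctv$ holds \emph{uniformly} across all of $\Omega_2$ and $\Omega_3$, including the subregions $\Omega_{k,j}$ adjacent to the real axis where the decay degenerates as $v\to 0$; and second, choosing the H\"older/Cauchy--Schwarz exponents so that the singular factors $|\xi|^{-1/2}$ and $|\xi-k|^{-1}$ are simultaneously integrable and the net power of $t$ comes out to exactly $-1/2$ uniformly in $k$. These are precisely the technical points treated in \cite{fNLS}, and the argument there transfers with only the phase-function estimate replaced by the Region \uppercase\expandafter{\romannumeral2} computation.
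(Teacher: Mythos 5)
Your proof is correct and follows the standard $\overline{\partial}$-estimate that the paper itself does not write out: this proposition is stated without proof, and its Region \uppercase\expandafter{\romannumeral1} analogue is explicitly deferred to \cite{fNLS}. The only step requiring a computation beyond what the paper displays is the interior phase bound on $\Omega_2$ (the paper estimates $\mathrm{Re}(2it\theta)$ only on the contour $\Sigma_1^{(1)}$, obtaining the constant bound $-2cht$); you correctly flag this, and it does hold uniformly, since for $\xi=u+iv\in\Omega_2$ one has $v\le h+|u|$, hence $v^2-3u^2\le 2h^2-u^2$, and using $x/t=12h^2+c$ this gives $\mathrm{Re}(2it\theta)=8tv(v^2-3u^2)-2xv\le -8t(h^2+u^2)v-2ctv\le -2ctv$. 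Combined with your Cauchy--Schwarz/H\"older splits in the $u$-variable and the uniform bound $\int_0^\infty e^{-ctv}\,|v-\mathrm{Im}\,k|^{-1/2}\,dv\lesssim t^{-1/2}$, this yields the stated operator-norm estimate and hence the invertibility of $I-S$ by Neumann series.
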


\begin{proposition}
     Suppose that $M^{(2)}(k)$ has asymptotic expansion  as follows:
        \begin{equation}
             M^{(2)}= I + \frac{M_{1}^{(2)}}{k} - \frac{1}{\pi}  \iint_{\mathbb{C}} \frac{\xi M^{(2)   }  (\xi) W^{(2)}(\xi)                  }{ k ( \xi  -  k)  }  d A(\xi),  \quad k \to \infty,
      \end{equation}
      where
      \begin{equation}
          M_{1}^{(2)} =  \frac{1}{\pi}  \iint_{\mathbb{C}}     M^{(2) }(\xi)   W^{(2)}(\xi) dA(\xi).
      \end{equation}
      Moreover,  for the large time $t$,   we have the following estimate
       \begin{equation}
         |  M_{1}^{(2)} | \lesssim \mathcal{O}(t^{-1}).
      \end{equation}

\end{proposition}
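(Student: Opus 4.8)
The plan is to prove the two assertions separately: first the purely algebraic extraction of the $k^{-1}$ coefficient, then the quantitative bound on $M_1^{(2)}$.

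For the expansion I would start from the integral representation of the pure $\overline{\partial}$-solution, which, exactly as in (\ref{dbarsolution}) for Region \uppercase\expandafter{\romannumeral1}, takes the form
\begin{equation}
    M^{(2)}(k) = I - \frac{1}{\pi}\iint_{\mathbb{C}} \frac{M^{(2)}(\xi)W^{(2)}(\xi)}{\xi-k}\,dA(\xi),
\end{equation}
and follows from part (c) of $\overline{\partial}$-problem 2 together with the generalized Cauchy--Pompeiu formula. Inserting the elementary identity $\frac{1}{\xi-k} = -\frac{1}{k} + \frac{\xi}{k(\xi-k)}$ splits this into a term of order $k^{-1}$ and a remainder, which immediately yields the stated expansion with $M_1^{(2)} = \frac{1}{\pi}\iint_{\mathbb{C}} M^{(2)}(\xi)W^{(2)}(\xi)\,dA(\xi)$. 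This step needs no estimates.

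For the decay bound I would use that $\|M^{(2)}\|_{L^\infty}\lesssim 1$ (from the preceding proposition, since $(I-S)^{-1}$ is bounded) and that $M^{(1)}_{RHP}$ and $(M^{(1)}_{RHP})^{-1}$ are uniformly bounded on the support of $\overline{\partial}R^{(1)}$. Recalling $W^{(2)} = M^{(1)}_{RHP}\,\overline{\partial}R^{(1)}\,(M^{(1)}_{RHP})^{-1}$, this reduces the problem to controlling $\iint_{\Omega_2\cup\Omega_3} |\overline{\partial}R^{(1)}(\xi)|\,|e^{\pm 2it\theta(\xi)}|\,dA(\xi)$, into which I would insert the pointwise estimate $|\overline{\partial}R_j|\lesssim |\gamma'(\mathrm{Re}\,k)| + |k|^{-1/2} + \overline{\partial}\mathcal{X}_{\mathscr{K}}(k)$ established above, and then bound the three resulting contributions. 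The heart of the matter is the phase: writing $k=u+iv$ and using $k_0^2=-\tfrac{x}{12t}<0$ in Region \uppercase\expandafter{\romannumeral2}, a direct computation gives $\mathrm{Re}(2it\theta) = -8tv\,(3u^2 - v^2 + 3|k_0|^2)$, and since $h<|k_0|$ the factor in parentheses is strictly positive throughout $\Omega_2$ (the signature table confirms this), so $\mathrm{Re}(2it\theta)\le -c\,t\,v$ there, with the symmetric statement on $\Omega_3$. Hence $|e^{\pm 2it\theta}|\le e^{-ct|v|}$, and integrating first in the transverse variable $v$ produces the decisive factor $\int_0^\infty e^{-ctv}\,dv \lesssim t^{-1}$. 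The remaining line integral in $u$ is finite for the $|\gamma'(\mathrm{Re}\,k)|$ piece since $\gamma'\in L^1$, while for the $|k|^{-1/2}$ piece I would apply Cauchy--Schwarz in $u$ against the exponential weight as in \cite{fNLS}; both give $\mathcal{O}(t^{-1})$. The $\overline{\partial}\mathcal{X}_{\mathscr{K}}$ term is supported in the fixed annuli around the discrete spectrum, where $|v|$ is bounded away from $0$ and $|e^{\pm 2it\theta}|$ is exponentially small, contributing only $\mathcal{O}(e^{-ct})$.

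The main obstacle is the simultaneous handling of the $|k|^{-1/2}$ singularity and the extraction of the sharp $t^{-1}$ rate from the two-dimensional $\overline{\partial}$-integral. In contrast to Region \uppercase\expandafter{\romannumeral1}, where $\pm k_0$ lie on $\mathbb{R}$ and the singular locus sits inside the bulk of the $\overline{\partial}$-support, forcing the weaker $t^{-3/4}$ bound, here $\pm k_0$ are on the imaginary axis, so the singularity is separated from the region of integration and the transverse $v$-integration delivers the full $t^{-1}$ gain; this is precisely why the Region \uppercase\expandafter{\romannumeral2} estimate improves upon its Region \uppercase\expandafter{\romannumeral1} analogue.
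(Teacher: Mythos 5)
Your proposal is correct and follows essentially the route the paper intends: the paper states this proposition without an explicit proof, deferring to the analogous Region \uppercase\expandafter{\romannumeral1} analysis and to \cite{fNLS}, and your argument — the Cauchy--Pompeiu splitting $\frac{1}{\xi-k}=-\frac{1}{k}+\frac{\xi}{k(\xi-k)}$ for the expansion, then boundedness of $M^{(2)}$ and $M^{(1)}_{RHP}$ combined with the pointwise bound on $\overline{\partial}R_j$ and the phase estimate $\mathrm{Re}(2it\theta)\leq -ctv$ on $\Omega_2\cup\Omega_3$ (valid since $3u^2-v^2+3|k_0|^2$ stays uniformly positive there when $h<|k_0|$) — is exactly that standard argument. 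Your closing remark also correctly identifies the mechanism behind the improved $t^{-1}$ rate compared with the $t^{-3/4}$ bound of Region \uppercase\expandafter{\romannumeral1}.
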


\subsection{Long time asymptotic behaviors   in region  \uppercase\expandafter{\romannumeral2}}

  Base on the above analysis of mixed $\overline{\partial}$-RH problem, we can obtain the  final result corresponding to  long time asymptotic behaviors of soliton solution region of Sasa-Satsuma equation  as $x>0$ and $|x/t|= \mathcal{O}(1)$.
\begin{theorem}
     Assume that  initial data $u_0(x) \in  \mathcal{S} (\mathbb{R})$ and the corresponding scattering data is $\sigma_{d}= \left\{    (k_j,  c_j),    k_j \in \mathscr{K}     \right\}_{j=1}^{2N}$.  For fixed  $v_2 \leq v_1  \in \mathbb{R}^{+}$,  we set  $\mathcal{I} =[ - v_1 /4 ,  - v_2  /4 ]$ and a space-time cone $\mathcal{S}(v_1,v_2)$ for time variable $t$ and space variable $x$. Let $q_{sol}(x,t, \sigma_{d}(\mathcal{I}))$ be  the $N(\mathcal{I})$ solution of Sasa-Satsuma equation with the scattering data  $ \sigma_{d}(\mathcal{I}) = \left\{  (k_j,c_j(\mathcal{I})),   k_j \in  \mathscr{K}(\mathcal{I})           \right\}$.  As $t \to   + \infty$ with  $(x,t) \in  \mathcal{S}(v_1,v_2)$, we have
     \begin{equation}\label{asymptotic2}
        u(x,t)= u_{sol}(x,t| \sigma_{d}(\mathcal{I}))  +   \mathcal{O}(t^{-1}).
     \end{equation}
\end{theorem}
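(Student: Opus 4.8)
The plan is to assemble the chain of transformations built in Section 4 and feed the result into the reconstruction formula \eqref{reconstruct}, exploiting the fact that in Region \uppercase\expandafter{\romannumeral2} every nontrivial jump contour lies off the real axis. Inverting $M^{(1)} = M R^{(1)}$ and $M^{(2)} = M^{(1)}(M^{(1)}_{RHP})^{-1}$, I would write
\begin{equation}
   M = M^{(2)}\, M^{(1)}_{RHP}\, \big(R^{(1)}\big)^{-1}. \nonumber
\end{equation}
Because $R^{(1)}$ is supported in $\Omega_2 \cup \Omega_3$ and equals $I$ in $\Omega_1$, letting $k \to \infty$ along the positive imaginary axis gives $(R^{(1)})^{-1}=I$ there, so collecting the $k^{-1}$ coefficients yields $M_1 = M_1^{RHP} + M_1^{(2)}$, where $M_1^{RHP}$ and $M_1^{(2)}$ are the subleading coefficients of $M^{(1)}_{RHP}$ and $M^{(2)}$. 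Note that, in contrast with Region \uppercase\expandafter{\romannumeral1}, there is no $\delta$-conjugation factor $T$, so no $T_1$ term appears. The reconstruction formula then gives $q(x,t) = 2i(M_1^{RHP})_{12} + 2i(M_1^{(2)})_{12}$.

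For the leading term I would use the key structural feature of this region: since $\pm k_0$ lie on the imaginary axis, the bound $\mathrm{Re}(2it\theta(k)) \le -2cht$ on $\Sigma_1^{(1)} \cup \Sigma_3^{(1)}$ forces the jump $V^{(2)}$ of RHP10 to decay exponentially. Consequently there is no local parabolic-cylinder model at $\pm k_0$ and hence no $t^{-1/2}$ radiation term, in sharp contrast with Region \uppercase\expandafter{\romannumeral1}. A standard small-norm argument then replaces $M^{(1)}_{RHP}$ by the pole-only model $\widetilde M^{(1)}_{RHP}$ of RHP11 with error $\mathcal{O}(e^{-cht})$, so that $2i(M_1^{RHP})_{12} = q_{sol}(x,t|\sigma_d) + \mathcal{O}(e^{-cht})$. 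The soliton-reduction proposition of this section together with its corollary let me then trade the full discrete data $\sigma_d$ for the cone-restricted data $\sigma_d(\mathcal{I})$ at the cost of $\mathcal{O}(e^{-8\mu t})$.

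The remaining contribution is controlled by the $\overline\partial$ estimate $|M_1^{(2)}| \lesssim t^{-1}$ stated above for this section. Substituting these three ingredients into the reconstruction formula and absorbing every exponentially small term into $\mathcal{O}(t^{-1})$ produces
\begin{equation}
   u(x,t) = u_{sol}(x,t|\sigma_d(\mathcal{I})) + \mathcal{O}(t^{-1}), \nonumber
\end{equation}
which is the assertion.

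The step I expect to be the main obstacle is the sharpness of the $\overline\partial$ bound $|M_1^{(2)}| \lesssim t^{-1}$, which must improve on the $t^{-3/4}$ rate of Proposition \ref{pro12} in Region \uppercase\expandafter{\romannumeral1}. One has to estimate $\iint_{\mathbb{C}} M^{(2)} W^{(2)}\,dA$ with $W^{(2)} = M^{(1)}_{RHP}\,\overline\partial R^{(1)}\,(M^{(1)}_{RHP})^{-1}$, balancing the bound on $\overline\partial R_j$ (which carries $|\gamma'(\mathrm{Re}\,k)| + |k|^{-1/2}$) against the decay of $e^{2it\theta}$ along the deformed contours. The crucial point is that, the phase having no real stationary point, the slowly decaying contribution that capped the Region \uppercase\expandafter{\romannumeral1} estimate at $t^{-3/4}$ is absent, which is what permits the full $t^{-1}$ rate; verifying this improvement carefully is the heart of the matter, after which the theorem follows by the direct substitution above.
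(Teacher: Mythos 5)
Your proposal is correct and follows essentially the same route the paper intends: the paper states this theorem without a written proof, but its Section 4 machinery (exponential decay of the jumps on $\Sigma_1^{(1)}\cup\Sigma_3^{(1)}$, reduction to the pole-only model $\widetilde M^{(1)}_{RHP}$, the soliton-reduction proposition and its corollary, and the $|M_1^{(2)}|\lesssim t^{-1}$ estimate) is exactly the chain you assemble. Your observations that no $T$-conjugation and no local parabolic-cylinder model are needed here, and that the sharpness of the $t^{-1}$ $\overline\partial$-bound is the only step requiring real work, accurately reflect where the substance lies.
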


\begin{remark}
   The Large time asymptotic formula (\ref{asymptotic2}) indicates that   the main contribution to   the soliton resolution of the  Cauchy initial value problem  of  the Sasa-Satsuma equation in Region  \uppercase\expandafter{\romannumeral2}  comes from two parts:  $1)$ Leading term   $ u_{sol}(x,t| \sigma_{d}(\mathcal{I}))$ corresponds to $N(\mathcal{I})$-soliton whose parameters are  controlled by a sum of  localized  soliton-soliton interactions  as one moves  through the once; $2)$  The remaining term $\mathcal{O}(t^{-1})$  is derived from the error estimate of the pure $\overline{\partial}$-problem. The final results prove that the soliton solution of Sasa-Satsuma equation in  Region \uppercase\expandafter{\romannumeral2}  is asymptotically stable.
\end{remark}

\section{Painleve asymptotics in Region \uppercase\expandafter{\romannumeral3}: $|x/t^{1/3}| =\mathcal{O}(1)$}
  In this region, we first consider the case for $x<0$ and $|x/t^{-1/3}|=\mathcal{O}(1)$,  as   for $x>0$,  the long time asymptotic result will follows from a similar analysis.   As for $x<0$, we  obtain the stationary points
  \begin{equation}
       \pm k_0 = \pm \sqrt{-\frac{x}{12t}} = \pm \sqrt{-\frac{x}{12t^{1/3}}} t^{-1/3} \to  0,  \qquad  as \quad t \to  + \infty,
  \end{equation}
  therefore,  we get the signature table of the phase function $\mathrm{Re} (i \theta)$ as follows:

\begin{figure}[H]
        \begin{center}
\begin{tikzpicture}
\node[anchor=south west,inner sep=0] (image) at (0,0)
 {\includegraphics[width=9cm,height=6cm]{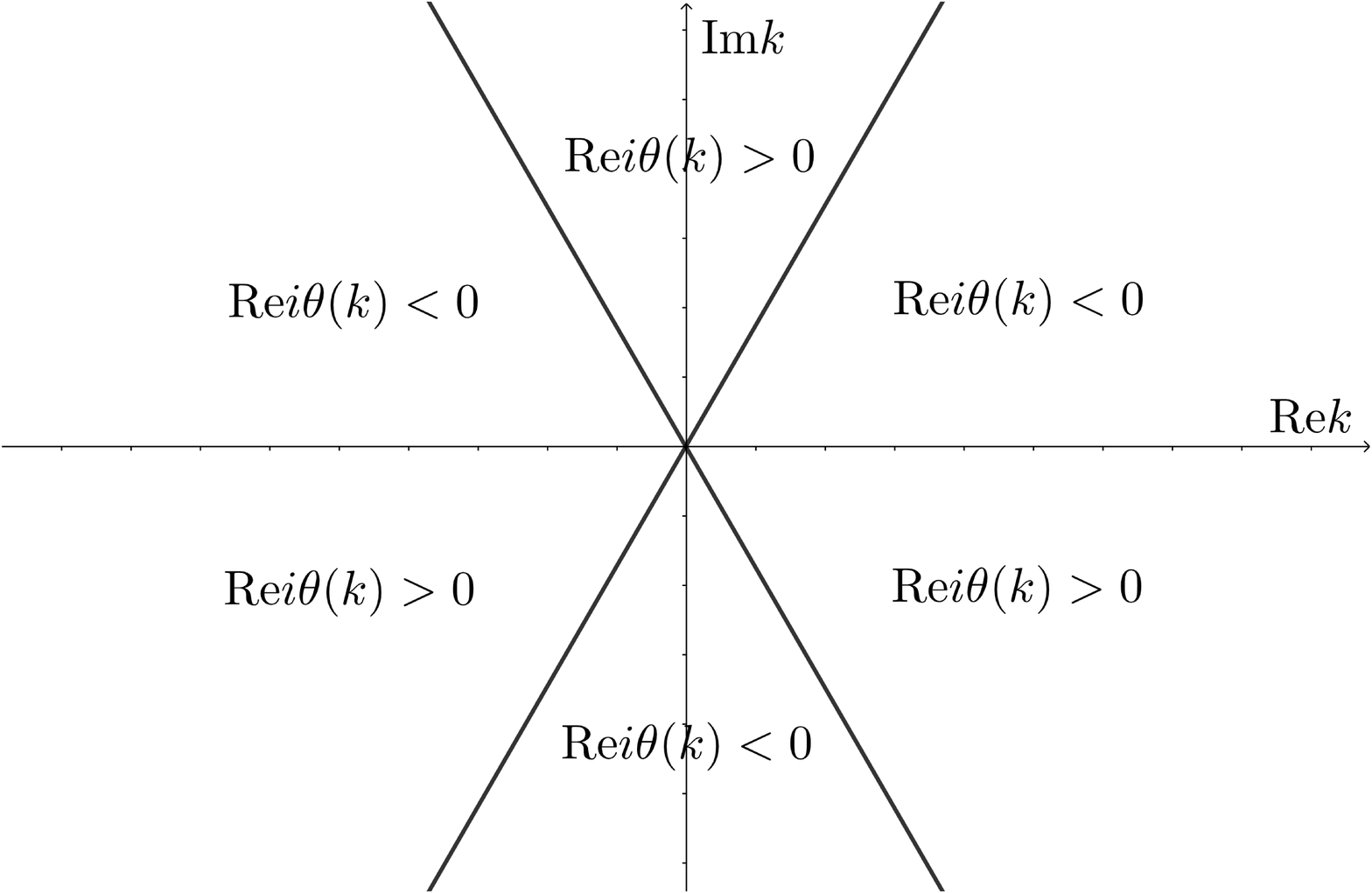}};
    \begin{scope}[x={(image.south east)},y={(image.north west)}]
    \end{scope}
\end{tikzpicture}
      \end{center}
       \caption{\footnotesize The signature table of $\mathrm{Re} i \theta(k)$.}
       \end{figure}

Here, we  first  note that there can be no poles on the sign demarcation line in the current situation,  we  can transfer the residual condition at all poles into the jump condition on a sufficiently small circle near the poles.   Moreover, these jump conditions on the circles  have a uniform upper bound so that they can decay into identity matrices exponentially as $t \to +\infty$.
Based on this, we can reduce RH problem of $M(k;x,t)$ to  the following form:  \\

 \noindent\textbf{RHP12}. Find a matrix-valued function $M^{(1)}(k;x,t)$ which solves:
\begin{itemize}
  \item[(a)] Analyticity $:$ $M^{(1)}(k;x,t)$ is meromorphic in $\mathbb{C \setminus R}$;
   \item[(c)]   Asymptotic behaviors:
                   \begin{equation}
                         M^{(1)}(k;x,t)=I+ O \left( \frac{1}{k} \right), \quad k \to \infty;
                   \end{equation}

   \item[(b)]Jump condition: $M^{(1)}(k;x,t)$ has continuous boundary  values $M^{(1)}_{\pm}$ on $\mathbb{R}$ and  \\
            \begin{equation}
                            M^{(1)}_{+}(k)=M^{(1)}_{-}(k)V^{(1)}(k), \qquad  k \in   \Sigma^{(1)},
            \end{equation}
      where  $V^{(1)}(k)=  J(k)$ and $\Sigma^{(1)}=\mathbb{R}$.
\end{itemize}

Next, we will focus on the solutions of  the RH problem corresponding to $M^{(1)}$. Firstly, we carry  out a scaling transform:
\begin{equation}
    k \to  \zeta t^{-1/3},
\end{equation}
Then the jump condition of $M^{(1)}$ becomes as
\begin{equation}
    \begin{pmatrix}  1  &   \gamma^{\dag}(\zeta t^{-1/3})   e^{-2i t \theta}     \\ 0 & 1    \end{pmatrix}  \begin{pmatrix}  1  &   0    \\ \gamma(\zeta t^{-1/3})  e^{2i t \theta}  & 1    \end{pmatrix}, \qquad  k \in \mathbb{R}
\end{equation}
where
\begin{equation}
   2i t \theta(\zeta t^{-1/3}) = 2 i (4   \zeta^3  +  x \zeta t^{-1/3}    )  =  8 i (\zeta^3- 3 \tau^{2/3}  \zeta ).
\end{equation}
Next,  we make continuous  extensions off the real axis onto $\Sigma^{(2)}$  to obtain a mixed $\overline{\partial}$-RH problem.
And then,  we will  again decompose the mixed problem into a pure $\overline{\partial}$-part and a pure RH-part.

\begin{figure}[H]
        \begin{center}
\begin{tikzpicture}
\node[anchor=south west,inner sep=0] (image) at (0,0)
 {\includegraphics[width=11cm,height=8cm]{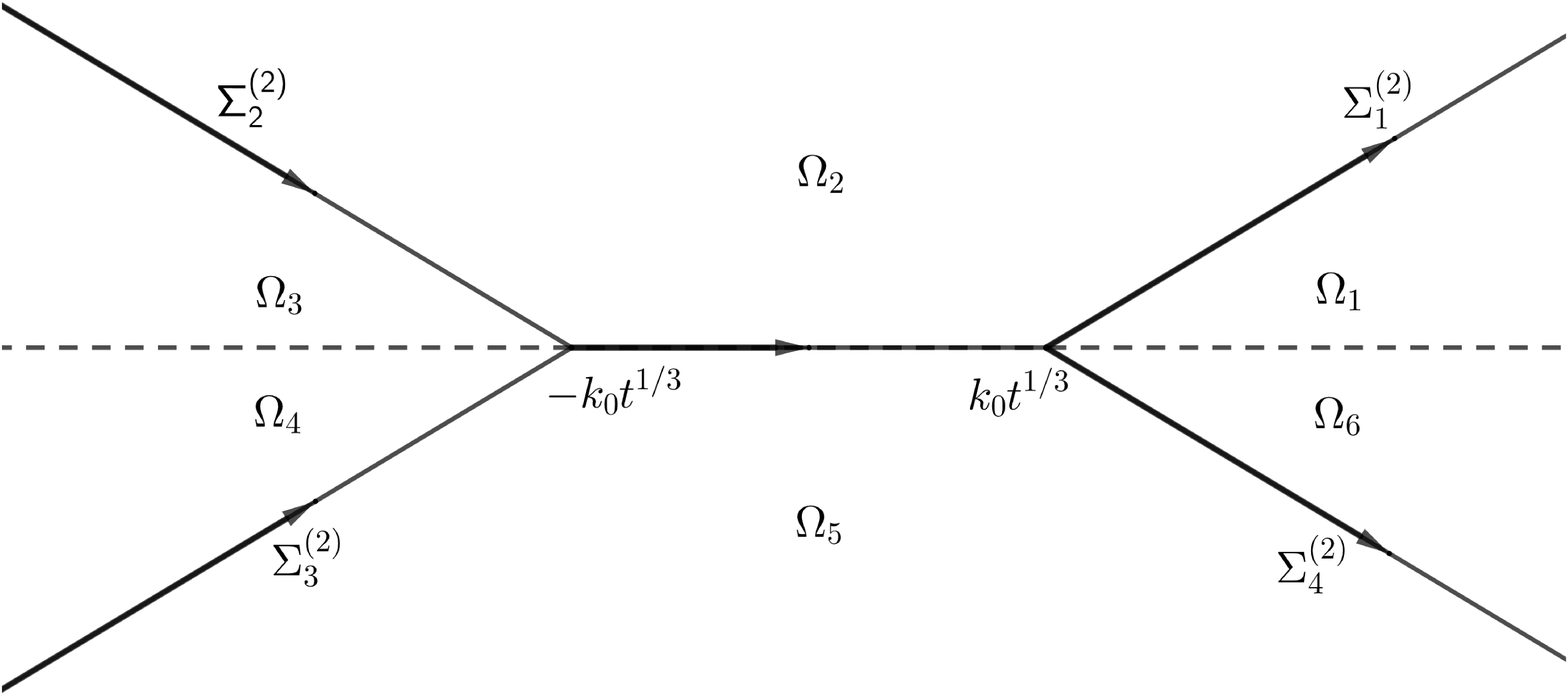}};
    \begin{scope}[x={(image.south east)},y={(image.north west)}]
    \end{scope}
\end{tikzpicture}
      \end{center}
       \caption{\footnotesize  Deformation from $\mathbb{R}$ to new contours $\Sigma^{(2)}$.}
       \end{figure}

For the pure $\overline{\partial}$-part, we  focus on the case in region $\Omega_{1}$.  In $\Omega_{1}$, we denote
\begin{equation}
     \zeta= u + k_0 t^{1/3} +i v,  \qquad u \geq \sqrt{3} v,
\end{equation}
and
\begin{equation}
    \begin{aligned}
   \mathrm{Re} \left(2it \theta(\zeta t^{-1/3}) \right)  & = 8 \left(      -3 (u + k_0 t^{1/3})^2 v  + v^3   +3 \tau^{2/3} v        \right)  \\
   &  \leq   8 \left(      -3  u^2 v -6 uv k_0 t^{1/3}  + v^3      \right)   \\
   &  \leq -21 u^2 v.
    \end{aligned}
\end{equation}
Similar to the previous analysis, we can obtain that there exists $R_{1}: \overline{\Omega}_{1} \to \mathbb{C}$ which enjoys the boundary condition
\begin{equation}
    R_{1}=\left\{     \begin{aligned}
           &  \gamma( \zeta t^{-1/3} ) ,   &  \zeta \in  (k_0 t^{1/3}, \infty),\\
           & \gamma(  k_0 )     ,   &  \zeta \in  \Sigma_{1}^{(2)},
                  \end{aligned}
       \right.
\end{equation}
and the interpolation is given by
\begin{equation}
     \gamma(k_0) + \left(     \gamma(\mathrm{Re}\zeta  t^{-1/3}   ) - \gamma(k_0)    \right) \cos 3\phi,   \qquad   0 \leq \phi \leq  \frac{\pi}{6}.
\end{equation}
Thus, we get the $\overline{\partial}$-derivative in $\Omega_{1}$ in the $\zeta$ variable:
\begin{equation}
   |\overline{\partial} R_{1}| \lesssim | t^{-1/3} \gamma' \left( ut^{-1/3}   \right) |  + \frac{||\gamma'||_{L^{2}}}{t^{1/3}  |   \zeta t^{-1/3} - k_0          |^{1/2} }.
\end{equation}
We proceed as in the previous section and study the integral equation  related to the $\overline{\partial}$-problem. Setting $z=\alpha + i \beta$ and $\zeta= u+ k_0 t^{1/3} + iv$, the region $\Omega_{1}$ corresponds to $u \geq \sqrt{3}v \geq 0$. We can apply the fundamental theorem of calculus to prove that
\begin{equation}
     \int_{\Omega_{1}} \frac{1}{|z-\zeta|} |W(\zeta)| d\zeta \lesssim t^{2/(3p)-1/6}, \qquad  4<p< \infty,
\end{equation}
 and we also show that
\begin{equation}
     \int_{\Omega_{1}}  |W(\zeta)| d\zeta \lesssim t^{2/(3p)-1/6}, \qquad  4<p< \infty.
\end{equation}
So far, we complete the discussion of the pure $\overline{\partial}$-problem. As for the RH-part, we first  give the pure  RH problem.  \\

 \noindent\textbf{RHP13}.  Find a matrix-valued function $M^{(1)}_{RHP}(\zeta)=M^{(1)}_{RHP}(\zeta;x,t)$ which satisfies
   \begin{itemize}
       \item[(a)] $M^{(1)}_{RHP}$ is analytic in $\mathbb{C}\setminus   \Sigma^{(2)}$;
       \item[(b)] $M^{(1)}_{RHP}\to  I$, \quad $ \zeta \to \infty$;
       \item[(c)] $M^{(1)}_{RHP}$ has the following jump condition $M^{(1)}_{RHP+}(\zeta)=M^{(1)}_{RHP-}(\zeta)V^{(2)}(\zeta), \hspace{0.5cm} \zeta  \in \Sigma^{(2)}$,
       where
       \begin{equation}
          V^{(2)}= \left\{  \begin{aligned}
            & \begin{pmatrix}    I & 0 \\  \gamma(k_0)   e^{2it \theta(\zeta  t^{-1/3})} & 1 \end{pmatrix},   ~~~\quad \quad  \quad  \zeta \in  \Sigma_1^{(2)} ,   \\
            & \begin{pmatrix}    I & 0 \\  \gamma(-k_0)   e^{2it \theta(\zeta  t^{-1/3})} & 1 \end{pmatrix},   \quad\quad \quad  \zeta \in  \Sigma_2^{(2)} ,   \\
            &  \begin{pmatrix}    I & \gamma^{\dag}(-k_0)   e^{-2it \theta(\zeta  t^{-1/3})} \\  0 & 1 \end{pmatrix},  \quad  \quad \zeta \in  \Sigma_3^{(2)} ,   \\
            &    \begin{pmatrix}    I & \gamma^{\dag}(k_0)  e^{-2it \theta(\zeta  t^{-1/3})} \\  0 & 1 \end{pmatrix},  \quad\quad \quad  \zeta \in  \Sigma_4^{(2)} ,   \\
              & \begin{pmatrix}  1+\gamma^{\dag}(\zeta t^{-1/3})\gamma(\zeta t^{-1/3})  &   \gamma(\zeta t^{-1/3}) e^{-2i t   \theta(\zeta  t^{-1/3})}      \\ \gamma(\zeta t^{-1/3})    e^{2i t   \theta(\zeta  t^{-1/3})} & 1    \end{pmatrix},  \\
               &  ~~~~~~~~~~~~~~~~~~~~~~~~ \quad~\quad \qquad \quad \quad \zeta \in   ( -k_0 t^{-1/3}, k_0 t^{-1/3} ).
            \end{aligned}
             \right.
       \end{equation}
  \end{itemize}

 Direct calculation shows that
 \begin{equation}
         \begin{aligned}
               \left| \gamma(\zeta t^{-1/3})   e^{2i t   \theta } - \gamma(0) e^{2i t   \theta }  \right| \lesssim t^{-1/6},
          \qquad      \left|  \gamma( \pm k_0)   e^{2i t   \theta } - \gamma(0) e^{2i t   \theta } \right| \lesssim t^{-1/6}.
         \end{aligned}
 \end{equation}
 Thus, the above RH problem can be reduced  to a problem on the following contour with jump matrices:  \\

 \noindent\textbf{RHP14}.  Find a matrix-valued function $\widehat{M}^{(1)}_{RHP}(\zeta) $ which satisfies
   \begin{itemize}
       \item[(a)] $\widehat{M}^{(1)}_{RHP}(\zeta) $ is analytic in $\mathbb{C}\setminus   \Sigma^{(2)}$;
       \item[(b)] $\widehat{M}^{(1)}_{RHP}(\zeta) \to  I$, \quad $ \zeta \to \infty$;
       \item[(c)] $\widehat{M}^{(1)}_{RHP}(\zeta) $ has the following jump condition 
       $$\widehat{M}^{(1)}_{RHP+}(\zeta)=\widehat{M}^{(1)}_{RHP-}(\zeta)\widehat{V}^{(2)}(\zeta), \hspace{0.5cm}\zeta \in \Sigma^{(2)},$$
       where
       \begin{equation}
          \widehat{V}^{(2)}(k)= \left\{  \begin{aligned}
            & \begin{pmatrix}    I & 0 \\  \gamma(0)  e^{2it \theta(\zeta  t^{-1/3})} & 1 \end{pmatrix}, &  \zeta \in  \Sigma_1^{(2)} ,   \\
            & \begin{pmatrix}    I & 0 \\  \gamma(0) e^{2it \theta(\zeta  t^{-1/3})} & 1 \end{pmatrix},   & \zeta \in  \Sigma_2^{(2)} ,   \\
            &  \begin{pmatrix}    I & \gamma^{\dag}(0)  e^{-2it \theta(\zeta  t^{-1/3})} \\  0 & 1 \end{pmatrix},   & \zeta \in  \Sigma_3^{(2)} ,   \\
            &    \begin{pmatrix}    I & \gamma^{\dag}(0) e^{-2it \theta(\zeta  t^{-1/3})} \\  0 & 1 \end{pmatrix},  &  \zeta \in  \Sigma_4^{(2)} ,   \\
              & \begin{pmatrix}  1+\gamma^{\dag}(0)\gamma(0)  &   \gamma(0)  e^{-2i t   \theta(\zeta  t^{-1/3})}      \\ \gamma(0)   e^{2i t   \theta(\zeta  t^{-1/3})} & 1    \end{pmatrix}, &  \zeta \in   ( -k_0 t^{-1/3}, k_0 t^{-1/3} ).
            \end{aligned}
             \right.
       \end{equation}
  \end{itemize}

In order to solve the above  RH problem,  we will introduce the model problem  for Sector $\mathcal{P}_{\geq}$ which appears in the Appendix B \cite{Huanglin}.  Matched with the model problem as $\boldsymbol{p}(t,\zeta)=s=\gamma(0)$, the solution of the RHP  has the following asymptotic expansion:
\begin{equation}
   \widehat{M}^{(1)}_{RHP}(y,t,\zeta) =  I +  \sum_{j=1}^{N} \sum_{l=0}^{N} \frac{\widehat{M}^{(1)}_{j,l}(y)}{\zeta^{j} t^{l/3}} +\mathcal{O}\left(   \frac{t^{-(N+1)/3}}{|\zeta|}  + \frac{t^{-1/3}}{|\zeta|^{N+1}}               \right),  \qquad  \zeta \to \infty.
\end{equation}
where $y=x/t^{1/3}$, $\zeta=k t^{1/3}$ and $x<0, t \to +\infty$. More importantly,  the $(13)$-entry of the leading coefficient $\widehat{M}^{(1)}_{10}$ is given by
\begin{equation}
    (\widehat{M}^{(1)}_{10}(y))_{13}= u_{P}(y;s),
\end{equation}
where $u_{P}(y;s)$ is the smooth solution of the modified Painlev$\acute{e}$  \uppercase\expandafter{\romannumeral2}
equation
\begin{equation}
     3 u^{''}_{P}(y) - y u_{P}(y) + 24  |u_{P}(y)|^2 u_{P}(y) =0.
\end{equation}

  \begin{figure}[H]
        \begin{center}
\begin{tikzpicture}
\node[anchor=south west,inner sep=0] (image) at (0,0)
 {\includegraphics[width=11cm,height=8cm]{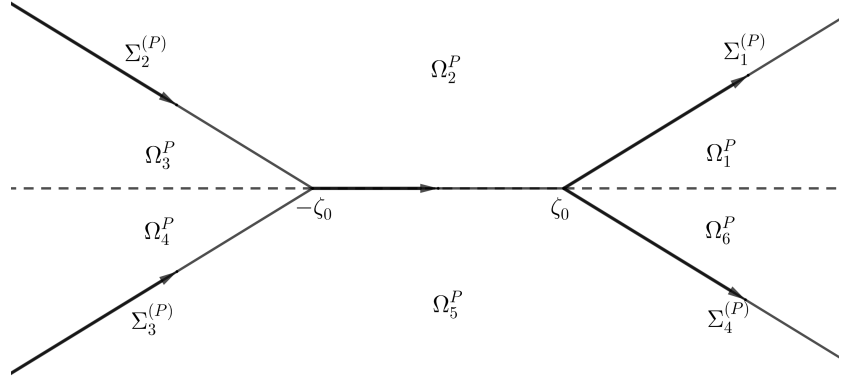}};
    \begin{scope}[x={(image.south east)},y={(image.north west)}]
    \end{scope}
\end{tikzpicture}
      \end{center}
       \caption{  \footnotesize   Jump contours for the modified Painleve  \uppercase\expandafter{\romannumeral2}  model}
       \end{figure}

Finally, considering the solutions of the $\overline{\partial}$-problem and the RH problem,  we obtain

\begin{theorem}
In the  Region \uppercase\expandafter{\romannumeral3},  as $t \rightarrow \infty$,  the solution for Sasa-Satsuma equation has Painleve   asymptotic
\begin{equation}\label{asymptotic3}
     u(x,t)=    \frac{1}{t^{1/3}} u_{P}  \left(\frac{x}{t^{1/3}} \right) + \mathcal{O}   \left(t^{2/(3p)-1/2}   \right), \qquad  4<p < \infty.
\end{equation}
\end{theorem}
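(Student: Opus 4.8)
The plan is to assemble the asymptotics from the two pieces produced by the standard decomposition—the solvable Painlev\'e-type model carrying the leading order and the pure $\overline{\partial}$-problem carrying the error—and to transport both through the reconstruction formula $q(x,t)=2i\lim_{k\to\infty}(kM(k;x,t))_{12}$ under the self-similar scaling $k=\zeta t^{-1/3}$; throughout I identify the first component of $q$ (which is $u$) with the scalar $(1,3)$-entry of the limiting matrix. First I would start from RHP12, in which the discrete spectrum has already been converted into jumps on small circles that decay like $e^{-ct}$; since $\pm k_0=\pm\sqrt{-x/(12t)}\to 0$, the solitonic contribution is exponentially suppressed in this region and is absorbed into the final error. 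After the scaling $k\mapsto\zeta t^{-1/3}$ the phase becomes $2it\theta=8i(\zeta^3-3\tau^{2/3}\zeta)$, exactly the cubic phase of a Painlev\'e~$\mathrm{II}$-type problem, so the contour deformation and $\overline{\partial}$-extension onto $\Sigma^{(2)}$ produce a mixed $\overline{\partial}$-RH problem that I split as $M^{(1)}=M^{(1)}_{RHP}\,M^{(\overline{\partial})}$, parallel to Regions~$\mathrm{I}$ and~$\mathrm{II}$.

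For the RH part I would first pass from RHP13 to RHP14: using the local estimates $|\gamma(\zeta t^{-1/3})-\gamma(0)|\lesssim t^{-1/6}$ and $|\gamma(\pm k_0)-\gamma(0)|\lesssim t^{-1/6}$ recorded above, I replace every reflection coefficient by its value $s=\gamma(0)$ at the origin, incurring a jump-matrix error of size $t^{-1/6}$. The reduced problem RHP14 is precisely the Sector-$\mathcal{P}_{\geq}$ model solved in Appendix~B of \cite{Huanglin}; invoking that result gives the expansion $\widehat{M}^{(1)}_{RHP}=I+\sum_{j,l}\widehat{M}^{(1)}_{j,l}(y)\,\zeta^{-j}t^{-l/3}+\cdots$ with $(\widehat{M}^{(1)}_{1,0}(y))_{13}=u_P(y;s)$, where $u_P$ solves $3u_P''-y u_P+24|u_P|^2u_P=0$. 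Tracking the $(1,3)$-entry through the scaling, the $\zeta^{-1}$ coefficient contributes $(kM^{(1)}_{RHP})_{13}\sim t^{-1/3}u_P(x/t^{1/3})$, which after the reconstruction factor produces the announced leading term $t^{-1/3}u_P(x/t^{1/3})$.

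For the $\overline{\partial}$ part I would bound the solution of the pure $\overline{\partial}$-problem by the Cauchy-operator estimates of the earlier sections, reducing everything to the two integral bounds already established, $\int_{\Omega_1}|z-\zeta|^{-1}|W|\,d\zeta\lesssim t^{2/(3p)-1/6}$ and $\int_{\Omega_1}|W|\,d\zeta\lesssim t^{2/(3p)-1/6}$ for $4<p<\infty$, the remaining sectors following by symmetry. These control the $\zeta^{-1}$ coefficient $M^{(\overline{\partial})}_1$ by $t^{2/(3p)-1/6}$; dividing by the scaling factor $t^{1/3}$ coming from $k=\zeta t^{-1/3}$ turns this into an error of order $t^{2/(3p)-1/6-1/3}=t^{2/(3p)-1/2}$ in $u(x,t)$. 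Since this dominates both the exponentially small soliton remainder and the $t^{-1/2}$ remainder from the $\gamma(0)$-reduction, collecting the two contributions in the reconstruction formula yields the stated asymptotics (\ref{asymptotic3}); the case $x>0$ follows from an entirely analogous deformation.

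The main obstacle, I expect, is the rigorous transfer of the $\overline{\partial}$-bounds through the $t^{-1/3}$ scaling while keeping the error uniform in the self-similar variable $y=x/t^{1/3}$: the cubic phase degenerates as $\zeta\to\pm k_0 t^{1/3}$, so the sharp exponent $2/(3p)-1/6$ requires the delicate H\"older-space splitting of $W$ that forces the restriction $4<p<\infty$, and one must verify that the smooth, pole-free Painlev\'e solution $u_P$ is the one selected, i.e. that $s=\gamma(0)$ lies in the regime where the model RHP of \cite{Huanglin} is globally solvable.
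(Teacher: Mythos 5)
Your proposal follows essentially the same route as the paper's Section 5: reduce to RHP12 with the discrete spectrum converted to exponentially small circle jumps, rescale $k=\zeta t^{-1/3}$ to expose the cubic Painlev\'e phase, split the mixed $\overline{\partial}$-RH problem into the pure RH part (reduced via the $t^{-1/6}$ replacement $\gamma\to\gamma(0)$ to the Sector $\mathcal{P}_{\geq}$ model of Huang--Lenells, whose $(1,3)$-entry yields $u_P$) and the pure $\overline{\partial}$ part controlled by the two $t^{2/(3p)-1/6}$ integral bounds, and finally convert through the reconstruction formula with the extra $t^{-1/3}$ scaling factor to get the $t^{2/(3p)-1/2}$ error. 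Your bookkeeping of how the exponents transform under the scaling, and your remark that the $\overline{\partial}$ error dominates the other remainders, is consistent with (and slightly more explicit than) what the paper records.
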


\begin{remark}
   The large time asymptotic formula (\ref{asymptotic3}) indicates that   the main contribution to   the soliton resolution of the  Cauchy initial value problem  of  the Sasa-Satsuma equation  in Region  \uppercase\expandafter{\romannumeral3}  comes from three parts:  $1)$ Leading term   $\frac{1}{t^{1/3}} u_{P}  \left(\frac{x}{t^{1/3}} \right) $ corresponds to the contribution from the jump contours which are matched with the modified Painlev$\acute{e}$ \uppercase\expandafter{\romannumeral2} model;   $2)$  The  second  term $\mathcal{O}   \left(t^{2/(3p)-1/2}   \right)$  is obtained from the error estimate of the pure $\overline{\partial}$-problem.  The final results have also helped prove an important conclusion that soliton solutions  of Sasa-Satsuma equation   in Region  \uppercase\expandafter{\romannumeral3}   are also  asymptotically stable.
\end{remark}

\newpage

\noindent\textbf{Acknowledgements}

This work is supported by  the National Natural Science
Foundation of China (Grant No. 11671095,  51879045).

\end{document}